\renewcommand{\emptyset}{\varnothing}
\newcommand{\CC}{\mathbb C}
\newcommand{\ZZ}{\mathbb Z}
\newcommand{\plucker}[1]{\operatorname{Pl}_{#1}}
\theoremstyle{definition}
\newtheorem{thm}{Theorem}[section]
\newtheorem{cor}[thm]{Corollary}
\newtheorem{lem}[thm]{Lemma}
\newtheorem{prop}[thm]{Proposition}
\newtheorem{defn}[thm]{Definition}
\newtheorem{conj}[thm]{Conjecture}
\newtheorem{prob}[thm]{Problem}
\newtheorem{eg}[thm]{Example}
\newtheorem{rem}[thm]{Remark}
	\newtheorem{fact}[thm]{Fact}
\numberwithin{equation}{section}
\newcommand{\qsymide}[2][]{
{\ifx&#1&%
  {\operatorname{QSym}_{#2}^+}
\else
  {{}^{#1}\!\operatorname{QSym}_{#2}^+}
\fi}
} % ideal of positive degree quasis
\newcommand{\sym}[1]{\operatorname{Sym}_{#1}} % ring of symmetric polynomials
\newcommand{\symide}[1]{\sym{#1}^+} % ideal of positive degree syms
\newcommand{\lcode}[1]{\operatorname{lcode}(#1)} % lehmer code
\newcommand{\suchthat}{\;|\;}
\newcommand{\schub}[1]{\mathfrak{S}_{#1}} % schubert polynomials
\newcommand{\des}[1]{\operatorname{Des}(#1)} % descent set
\date{}
\newcommand{\cat}[1]{\operatorname{Cat}_{#1}} % catalan sequence
\newcommand{\idem}{\operatorname{id}} %identity
\newcommand{\slide}[2][]{
{\ifx&#1&%
  {\mathfrak{F}_{#2}}
\else
  {\mathfrak{F}_{#2}^{\underline{#1}}}
\fi}
} % slide polynomial
\newcommand{\ct}{\operatorname{ev_0}} % constant term
\newcommand{\rope}[1]{\mathsf{R}_{#1}} % R-ing operator
\newcommand{\fl}[1]{\mathrm{Fl}_{#1}}
\newcommand{\ins}{\varepsilon}
\newcommand{\xl}{\textbf{x}}
\newcommand{\ev}{\operatorname{ev}}
\newcommand{\mc}[1]{\mathcal{#1}}
\renewcommand\emph[1]{\textcolor{blue}{\textit{#1}}} %emph redefined
\newcommand{\wellaligned}[1]{\operatorname{WA}_{#1}}
\newcommand{\syt}{\operatorname{SYT}} % notation for standard Young tableau
\newcommand{\crop}{\operatorname{crop}} % cropping a tableau
\newcommand{\richtab}{\operatorname{RT}} %notation for Richardson tableau
\newcommand{\flag}[1]{\operatorname{Fl}_{#1}}
\newcommand{\rothe}{\mc{R}} % Rothe diagram
\newcommand{\evac}[1]{\operatorname{evac}(#1)}
\newcommand{\First}{\operatorname{First}}
\newcommand{\GL}{\operatorname{GL}}
\newcommand{\cox}{\mathbf{c}} %coxeter element
\title{Richardson tableaux and Schubert positivity}
\author{Hunter Spink}
\address{Department of Mathematics,
University of Toronto, Toronto, ON M5S 2E4, Canada}
\email{\href{mailto:hunter.spink@utoronto.ca}{hunter.spink@utoronto.ca}}
\author{Vasu Tewari}
\address{Department of Mathematical and Computational Sciences, University of Toronto Mississauga, Mississauga, ON L5L 1C6, Canada}
\email{\href{mailto:vasu.tewari@utoronto.ca}{vasu.tewari@utoronto.ca}}
\thanks{
HS and VT acknowledge the support of the NSERC, respectively [RGPIN-2024-04181] and [RGPIN-2024-05433].}
\keywords{Bruhat order, Richardson variety, Schubert variety, Schubert polynomial, Springer fiber, Young tableau} 
\begin{document}
%\newpage
\begin{abstract}
We compute the Schubert cycle expansion of those irreducible components of Springer fibers equal to Richardson varieties. This generalizes work of G\"uemes in the case of a hook shape and answers a question of Karp--Precup.
\end{abstract}

\maketitle

Let $B,B^-\subset \GL_n$ denote the Borel and opposite Borel subgroups of upper triangular and lower triangular matrices respectively, and let  $\mathfrak{b},\mathfrak{b}^-,\mathfrak{gl}_n$ denote their respective Lie algebras. Karp and Precup \cite{KaPr25} recently studied the interplay between two distinguished classes of subvarieties of the complete type $A$ flag variety $\flag{n}=\GL_n/B$. 
\begin{enumerate}
    \item The first class are the \emph{Springer fibers} $$\mc{B}_\lambda=\{gB\in \flag{n}\suchthat g^{-1}Mg\in \mathfrak{b}\},$$ for $M\in \mathfrak{gl}_n$ a fixed nilpotent operator of Jordan type the partition $\lambda\vdash n$ \cite{Spr69}. The irreducible components are indexed by standard Young tableaux (SYTs) of shape $\lambda$ \cite{Spa76}, and 
Springer \cite{Spr76} showed the homology of Springer fibers carry an action of the symmetric group which is irreducible in the top dimension, connecting the geometric study of Springer fibers to representation theory.
\item The second class are the \emph{Richardson varieties} $$
X_{v}^w\coloneqq \overline{BwB/B}\cap \overline{B^-vB/B}\subset \fl{n},
$$ indexed by pairs of permutations $(v,w)$ where $v\leq_B w$ in Bruhat order.
These varieties are irreducible, and are obtained by intersecting Schubert varieties $X^w=\overline{BwB/B}\subset \fl{n}$ with opposite Schubert varieties $X_v=\overline{B^-vB/B}\subset \fl{n}$. Richardson varieties play a crucial role in the connection between algebraic geometry and Schubert calculus because of the identity
$$[X^w_v]=\sum_u c^w_{u,v}[X^u]\in H_\bullet(\flag{n})$$
which decomposes the homology class of the Richardson variety into the homological basis of Schubert varieties, with coefficients $c^w_{u,v}$ the structure constants for Schubert polynomial multiplication $\schub{u}\schub{v}=\sum_w c^w_{u,v}\schub{w}$. 
Showing the nonnegativity of $c^w_{u,v}$ via combinatorial means for arbitrary triples $(u,v,w)$ is an outstanding open problem in algebraic combinatorics \cite[Problem 11]{St00}. 
The answer, outside of the celebrated Littlewood--Richardson rule for Grassmannian permutations \cite{KT99,Li40,Sch77}, is known in special cases. See for instance \cite{gaozhu,Hu23,huang2022bumpless, KZJ3, PW24} for manifestly nonnegative rules, \cite{PC25signed} for a signed rule, and \cite{PC25} for a complexity-theoretic perspective.
\end{enumerate}
Motivated by Lusztig's study \cite{Lus21} of the totally nonnegative Springer fiber, Karp and Precup classified the SYTs which they called \emph{Richardson tableaux}  that index irreducible components of Springer fibers which are equal to Richardson varieties \cite[Theorem 1.5]{KaPr25}.

It is unknown what the coefficients of the expansion of arbitrary Springer fibers and their irreducible components into Schubert cycles are in full generality, and this question was raised by Springer \cite[Problem 4]{opag83}. 
For tableaux of hook shape the decomposition was determined by G\"uemes \cite{Gu89}, and all such tableaux are Richardson tableaux \cite[\S 10]{KaPr25}. 
In \cite[Problem 10.4]{KaPr25} Karp and Precup ask Springer's question in the special case of a Richardson tableau $T$ -- whether there is a combinatorial way to compute the Schubert cycle expansion of the corresponding Springer fiber component/Richardson variety $X_{v_T}^{w_T}$. 
We answer this question:
\begin{thm}
  Given a Richardson tableau $T$, there is a combinatorial rule for computing $c^{w_T}_{u,v_T}$.
\end{thm}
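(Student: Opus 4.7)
The plan is to exploit Karp--Precup's explicit description of the pair $(v_T,w_T)$ attached to a Richardson tableau $T$ in \cite[Theorem 1.5]{KaPr25} to reduce the computation of $c^{w_T}_{u,v_T}$ to a setting where a Schubert-positive multiplication rule is available. The first task is to unpack the defining conditions on $T$ and translate them into concrete structural properties of $v_T$: I expect $v_T$ to admit a distinguished reduced expression indexed by the columns (or rows) of $T$, and the Bruhat interval $[v_T,w_T]$ to decompose compatibly with a block decomposition of $T$. The hook case handled by G\"uemes \cite{Gu89} should appear as the simplest instance of this decomposition and serve as a sanity check throughout.

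With such a factorization in hand, the next step is to express $\schub{v_T}$ as a product of Schubert polynomials for simpler permutations. The most optimistic scenario is that $\schub{v_T}$ splits as a product of Grassmannian Schubert polynomials associated to the blocks of $T$, in which case iterating the Littlewood--Richardson rule \cite{KT99,Li40,Sch77} immediately yields a manifestly positive expansion of $\schub{u}\schub{v_T}$ and hence a combinatorial rule for $c^{w_T}_{u,v_T}$. Failing this, a fallback is to choose a reduced word of $v_T$ adapted to $T$, apply Monk's rule iteratively along this word, and exhibit a sign-reversing involution collapsing the resulting signed chain-count in Bruhat order to a positive sum. In either case, the expected shape of the rule is
$$c^{w_T}_{u,v_T}=\#\{\text{``Richardson fillings'' of a skew shape determined by }T\text{ and compatible with }u\},$$
generalising G\"uemes' hook rule and specialising to Littlewood--Richardson coefficients when $T$ is the column superstandard filling of a rectangle.

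The main obstacle I anticipate is establishing manifest positivity of whichever combinatorial model emerges: an iterated Monk expansion is signed in general, so the crux is identifying a bijection between surviving chains and a natural family of fillings. Correctness will be confirmed by three consistency checks: (i) agreement with G\"uemes when $T$ is of hook shape; (ii) the total count matching the codimension $\ell(w_T)-\ell(v_T)=\codim_{\flag{n}} X^{w_T}_{v_T}$ at top degree; (iii) compatibility with the $w_0$-twist duality $(v_T,w_T)\leftrightarrow (w_0 w_T w_0,w_0 v_T w_0)$ exchanging $T$ with $\evac{T}$.

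A complementary route, which I would pursue if the purely combinatorial approach stalls, is geometric: construct a flat degeneration of $X^{w_T}_{v_T}$ whose special fiber is a reduced union of Schubert varieties $X^u$ indexed by the conjectured combinatorial objects. This would simultaneously confirm positivity and yield the rule, and dovetails naturally with a Bott--Samelson style resolution built from the distinguished reduced word of $v_T$ produced in the first step.
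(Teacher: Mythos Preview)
Your plan does not yet contain a working mechanism. The optimistic scenario---that $\schub{v_T}$ factors as a product of Grassmannian Schubert polynomials and that iterating the Littlewood--Richardson rule then finishes the job---fails on two counts. First, for a general Richardson tableau $T$ the permutation $v_T$ is not dominant and its Schubert polynomial is not a product of Schur polynomials in any evident way (e.g.\ for the running example in the paper $v_T=1\,6\,9\,2\,7\,11\,3\,8\,4\,10\,12\,5$ has descent set $\{3,6,8,11\}$). Second, and more seriously, even granting such a factorization you would still need a positive rule for $\schub{u}\cdot\schub{v'}$ with $u$ arbitrary and $v'$ Grassmannian; outside the Pieri case (single row or column) no such rule is known, so ``iterating LR'' is not available. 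Your fallback---iterated Monk plus a hoped-for sign-reversing involution---is exactly the open problem for general $c^w_{u,v}$, and nothing in the plan singles out what special feature of $(v_T,w_T)$ would make the cancellation tractable.

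The paper's argument is structurally different from anything in your outline. The key move is not to factor $\schub{v_T}$ but to replace the \emph{pair} $(v_T,w_T)$: one introduces a class of \emph{well-aligned} pairs $(v,w)$, defined recursively via the positions of $1$ in $v$ and $w$, and proves that for any such pair the Bruhat interval $[v,w]$ is translation-equivalent (by left multiplication) to an interval $[v^{\uparrow},v^{\uparrow}v^{-1}w]$ with $v^{\uparrow}$ dominant. Translation-equivalence forces $c^w_{u,v}=c^{v^{\uparrow}v^{-1}w}_{u,v^{\uparrow}}$, and since $\schub{v^{\uparrow}}$ is a single dominant monomial $\prod_i x_1\cdots x_{k_i}$, Sottile's Pieri rule (multiplication by $x_1\cdots x_k$) iterated along the rows of the partition gives a manifestly positive chain-counting rule. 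The verification that $(v_T,w_T)$ is well-aligned uses the recursive $\crop$ structure of Richardson tableaux and the $L$-slide characterization of evacuation. An independent second proof writes the functional $f\mapsto \ct\partial_{w_T}(\schub{v_T}f)$ as a composite of divided differences $\partial_i$ and Bergeron--Sottile substitutions $\rope{i}$, each combinatorially Schubert-positive. Neither route involves an LR-type filling model or a degeneration; the missing idea in your plan is the passage to a dominant $v$ via interval translation.
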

We note further that the components of Springer fibers studied by Graham--Zierau \cite{GZ11} are a strict subset of those corresponding to Richardson tableaux (see \cite[Section 11.3]{KaPr25}). 
So the aforementioned theorem gives a combinatorial alternative to localization formulae derived in \cite{GZ11}.

Our strategy is to describe a family of pairs of permutations $(v,w)$ we call \emph{well-aligned}, for which we can combinatorially compute the Schubert structure coefficients $c^w_{u,v}$, and show that $(v_T,w_T)$ are well-aligned when $T$ is a Richardson tableau. 
For pairs of well-aligned permutations we give two distinct combinatorial ways of computing these coefficients. \begin{enumerate}
    \item One way is to show that well-aligned pairs give Bruhat intervals $[v,w]$ that are  translation-equivalent to $[v',w']$ with $v'$ a $132$-avoiding permutation, which guarantees that $c^w_{u,v}=c^{w'}_{u,v'}$, and then using the fact that $\schub{v'}$ is a dominant monomial we can compute the Schubert structure coefficients by iterating the ``simplest'' case of Sottile's Pieri rule \cite{Sot96}. 
    \item Our second way is to show that the $c^w_{u,v}$ can be obtained by successively applying Schubert positive maps to $\schub{u}$, one of which is the divided difference operation $\partial_if=\frac{f-s_i\cdot f}{x_i-x_{i+1}}$ and the other is the Bergeron--Sottile map \cite{BS98,nst_c}
    $$\rope{i}f=f(x_1,\ldots,x_{i-1},0,x_i,x_{i+1},\ldots).$$
\end{enumerate}
Both ways have geometric interpretations. The first method corresponds to showing that $X^w_v$ is a left-translate of $X^{w'}_{v'}$ by the permutation matrix for $v'v^{-1}$. The second method corresponds to geometrically building $X^w_v$ in a manner similar to Schubert varieties via geometric push-pull operations, interspersed with inclusions of flag varieties into larger flag varieties associated to Bergeron--Sottile pattern maps \cite{BS98}, a technique that has been exploited by the authors in collaboration with Bergeron, Gagnon, and Nadeau  in a series of papers studying quasisymmetric coinvariants in algebraic geometry \cite{bgnst_1, bgnst_2, nst_c, NST_a}. 

Finally, Karp and Precup show that $X_{v_T}^{w_T}$ are smooth. We generalize this result by showing that the pairs $(v_T,w_T)$ satisfy a finer property we call \emph{very well-aligned}, and we show for very well-aligned pairs that $X_{v}^{w}$ is always a smooth variety.

\smallskip

\noindent \textbf{Outline:}
In Section~\ref{sec:background} we collect the combinatorial background. 
In Section~\ref{sec:well_aligned_and_bruhat} we introduce our central combinatorial object, well-aligned pairs, and then proceed to establish that the Bruhat interval $[v,w]$ coming from a well-aligned $(v,w)$ is equivalent by left translation to an interval $[v',w']$ where $v'$ is 132-avoiding. In \Cref{sec:WAschubert} we give two different nonnegative combinatorial expansions for $c^w_{u,v}$ for well-aligned $(v,w)$.
In Section~\ref{sec:richtab_to_well_aligned} we show that the pairs of permutations indexing the Richardson varieties of Karp and Precup are well-aligned. In Section~\ref{sec:class_expansions}
we reinterpret our combinatorial results geometrically. 
We conclude with some enumerative speculation in Section~\ref{sec:misc}.\\

\subsection*{Acknowledgements}
 We are very grateful to Martha Precup and Steven Karp for discussions about their work, for sharing notes as well as an early draft, and for directing us to relevant portions of their article. 
 We are also grateful to Allen Knutson for helpful discussions on descent cycling. 
 Finally, VT is particularly thankful to the organizers of the ``Combinatorics and Enumerative Geometry'' workshop held at the IAS in February 2025, as it provided ample food for thought.

%%%%%%%%%%%%%%%%%%%%%%%
\section{Combinatorial background}
\label{sec:background}
%%%%%%%%%%%%%%%%%%%%%%%
Throughout $[n]=\{1,\dots,n\}$ for $n$ a positive integer.
We refer the reader to standard texts \cite{Fulton, St99} for any undefined terminology.

%%%%%%%%%%%%%%
\subsection{Young tableaux}
%%%%%%%%%%%%%%

Recall that a \emph{partition} $\lambda$ is a weakly decreasing sequence of positive integers. 
We denote the size of $\lambda$, i.e. the sum of its entries by $|\lambda|$.
If $|\lambda|=n$, we denote this by $\lambda\vdash n$.
We represent $\lambda$ using its \emph{Young diagram} in English notation.
Given partitions $\mu\subseteq \lambda$, we define the \emph{skew shape} $\lambda/\mu$ as the set-theoretic difference of the Young diagrams of $\lambda$ and $\mu$.
A \emph{column strip} is a skew shape with no two boxes occupying the same row.

Given $\lambda\vdash n$, a \emph{standard Young tableau} (henceforth SYT) of shape $\lambda$ is a filling of the Young diagram of $\lambda$ bijectively with numbers drawn from $[n]$, so that the entries increase strictly from left to right along rows and from top to bottom along columns.
We let \emph{$\syt(\lambda)$} denote the set of standard Young tableaux of shape $\lambda$.

Given $T\in \syt(\lambda)$ we define two reading words associated with it.
The \emph{reading word} is obtained by reading the entries of the tableau row-wise bottom to top, with each row read left to right.  
The \emph{top-down reading word} is defined similarly except that the rows are read from top to bottom.

We now describe Sch\"utzenberger's evacuation operator \cite{Sch63}.
Given $T\in \syt(\lambda)$, define the evacuation tableau $\evac{T}\in \syt(\lambda)$ as follows. 
Delete the entry in the top left cell of $T$ and decrement the remaining entries by 1.
Then perform jeu-de-taquin slides to rectify the resulting Young tableau of skew shape thereby obtaining an SYT $T'$ on $n-1$ boxes with shape $\lambda\setminus\{c\}$, where $c$ is a corner box in $\lambda$.
We then place $n$ in $c$ and consider it ``frozen'' for the remainder of the procedure.
We repeat this with $T'$ and continue until all boxes are frozen.
The final tableau is $\evac{T}$.

\begin{eg}\label{ex:evacuation}
  Shown below are an SYT $T$ and its evacuation.
  \[
  \begin{ytableau}
        1 & 2 & 5 & 7 & 10\\
        3 & 8 & 11\\
        4 & 9\\
        6 & 12
\end{ytableau}\qquad
\begin{ytableau}
        1 & 4 & 7 & 9 & 12\\
        2 & 5 & 8\\
        3 & 10\\
        6 & 11
      \end{ytableau}
  \]
 The reading word of $T$ and the top-down reading word of $\evac{T}$ are as follows:
 $$\begin{tabular}{ccccccccccccc}
6 & 12 & 4 & 9 & 3 & 8& 11 & 1& 2 & 5& 7& 10\\
1 & 4 & 7 & 9& 12 & 2 & 5& 8& 3& 10& 6& 11
\end{tabular}$$
\end{eg}

%%%%%%%%%%%%%%
\subsection{Permutations}
%%%%%%%%%%%%%%
We denote the symmetric group comprising permutations of $[n]$ by $S_n$.
It is generated by the simple transpositions $s_i=(i,i+1)$ for $1\leq i\leq n-1$.
A \emph{reduced word} for $w\in S_n$ is a minimal length expression $w=s_{i_1}\cdots s_{i_{\ell}}$ as a product of simple transpositions.
An \emph{inversion} in $w\in S_n$ is an ordered pair $(i,j)$ where $i<j$ and $w(i)>w(j)$.
The \emph{length} of $w$, denoted by $\ell(w)$, is the number of inversions in $w$.
A \emph{descent} of $w$ is an index $i\in [n-1]$ such that $w(i)>w(i+1)$.
We denote the set of descents of $w$ by $\des{w}$.
We will typically write our permutations in one-line notation, e.g. $w = w(1)\cdots w(n)$.
We write $w_{\circ}$ for the longest element $n\, (n-1)\, \cdots 1$ in $S_n$.

\begin{defn}\label{de:bruhat_order}
  The \emph{Bruhat order $\leq_B$} on $S_n$ is obtained as the transitive closure of the cover relation $\lessdot_B$ defined by $u\lessdot_B v$ if and only if $ut=v$, where $t$ is a transposition and $\ell(v)-\ell(u)=1$.
\end{defn}

Since we need the tableau criterion \cite{BjBr05} for Bruhat order, we recall it here.
\begin{thm}[Tableau criterion]
  Given $u,v\in S_n$, we say that $u\leq_B v$ if and only if  $u_{i,k}\leq v_{i,k}$ for all $1\leq i\leq  k\leq n$, where $u_{i,k}$ is the $i$-th entry in the increasing rearrangement of $u(1),\dots,u(k)$ and similarly for $v_{i,k}$. In fact, it suffices to take $k\in \des{u}$ to check these inequalities, a statement known as  the \emph{improved tableau criterion} \cite[Theorem 2.6.3(ii)]{BjBr05}.
  \label{th:tableau_criterion} 
\end{thm}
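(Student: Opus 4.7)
The plan is to prove the two directions by induction on $\ell(v) - \ell(u)$, and then deduce the improved version by downward induction on $k$. For the forward direction, transitivity reduces us to handling a single cover $u \lessdot_B v$: writing $v = u\cdot t$ for a transposition $t = (a,b)$ with $a < b$, we have $u(a) < u(b)$ and no $c \in (a,b)$ with $u(a) < u(c) < u(b)$. The multisets $\{u(1),\ldots,u(k)\}$ and $\{v(1),\ldots,v(k)\}$ coincide when $k < a$ or $k \geq b$, while for $a \leq k < b$ they differ only by replacing $u(a)$ with the strictly larger value $u(b)$; sorting both multisets, every entry in the increasing rearrangement either stays fixed or moves up, yielding $u_{i,k} \leq v_{i,k}$ throughout.

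For the backward direction, suppose all tableau inequalities hold and $u \neq v$. The strategy is to produce a Bruhat cover $u \lessdot_B u'$ with $u' \leq v$ still in the tableau order, then apply induction on $\ell(v)-\ell(u)$. Choose the minimal index $k$ with $u(k) \neq v(k)$; since $u(j) = v(j)$ for $j < k$, the tableau inequality at position $k$ forces $u(k) < v(k)$. Let $j > k$ be the minimal index with $u(j) > u(k)$ and $u(j) \leq v(k)$, and set $u' = u\cdot (k,j)$. The minimality of $j$ ensures no value of $u$ strictly between $u(k)$ and $u(j)$ occurs in positions between $k$ and $j$, so $u \lessdot_B u'$ is a Bruhat cover, and one verifies directly from the hypothesized inequalities for $u,v$ together with $u(j) \leq v(k)$ that $u'$ still satisfies the tableau inequalities against $v$.

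For the improved criterion, assume the inequalities hold at all $k \in \des{u}$; we extend them to every $k \in [n]$ by downward induction on $k$. The base case $k = n$ is trivial. For $k \notin \des{u}$, so $u(k) < u(k+1)$, compare the sorted tuples at positions $k$ and $k+1$: removing $u(k+1)$ from $(u_{i,k+1})_{i}$ at its sorted position $j$ produces $(u_{i,k})_{i}$, and analogously for $v$ with position $j'$. A case analysis on whether $j \geq j'$ or $j < j'$, together with the observation that the ascent $u(k) < u(k+1)$ pins the location of $u(k)$ in the $(k{+}1)$-th sorted tuple and constrains $j$ relative to $j'$, shows that the inequalities at $k+1$ transfer to $k$.

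The main obstacle is the backward lifting step: the transposition $(k,j)$ must be chosen to simultaneously yield a Bruhat cover \emph{and} preserve the tableau inequalities, so the minimality choices and the use of $u(j) \leq v(k)$ are essential. The improved-criterion case analysis is less conceptual but requires careful bookkeeping of how the insertion/deletion of a single entry in a sorted tuple interacts with the ascent hypothesis; these are the classical arguments of Björner--Brenti.
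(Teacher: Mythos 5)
The paper does not prove this theorem: it is cited directly from B\"orner--Brenti (for the tableau criterion) and from \cite{BjBr96} (for the improved version), so there is no ``paper's proof'' to compare against. Your sketch is therefore a proposed proof of a cited classical result, and I'll assess it on its own terms.

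The forward direction is correct: reducing to a cover $u\lessdot_B v$ and observing that each sorted prefix of $v$ is obtained from that of $u$ by replacing one entry with something at least as large is exactly the standard argument. The backward lifting step is also sound in substance: your choice of $j$ (the minimal $j>k$ with $u(k)<u(j)\le v(k)$) does exist, does give a Bruhat cover, and does preserve the tableau inequalities; but the phrase ``one verifies directly'' hides the genuinely nontrivial part. For the prefix lengths $m$ with $k\le m<j$, the entries of the sorted prefix of $u'$ that shift down by one index land in the gap $(u(k),u(j))$, all come from positions $<k$, hence lie in both prefixes; to see that the tableau inequality survives for these indices one needs a ``lattice-path'' argument showing that the counting function $\#\{A_m\le x\}-\#\{B_m\le x\}$ is \emph{strictly} positive at $x=a_{i+1}-1$, using that $v(k)\in B_m\setminus A_m$ would force a negative dip otherwise. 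This works, but it is not a direct verification; as written, the step is a sketch rather than a proof.

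The improved-criterion part, however, contains a genuine gap. You propose to propagate the inequalities from $k+1$ to $k$ whenever $k\notin\des{u}$, by analyzing how removing $u(k+1)$ (resp.\ $v(k+1)$) from the sorted $(k{+}1)$-prefix produces the sorted $k$-prefix. This single-step implication is \emph{false}. Take $n=6$, $u=513246$, $v=246135$. Here $2\notin\des{u}$ since $u(2)=1<3=u(3)$, the sorted $3$-prefix inequality holds ($\{1,3,5\}\le\{2,4,6\}$ entrywise), but the sorted $2$-prefix inequality fails ($\{1,5\}\not\le\{2,4\}$ since $5>4$). Thus the ascent hypothesis together with the inequalities at $k+1$ do \emph{not} yield the inequalities at $k$, and your ``case analysis on $j\ge j'$ or $j<j'$'' cannot close this gap no matter how carefully it is carried out, because the conclusion is simply not true at that level of generality. (Note this is not a counterexample to the theorem itself: in this example the inequality at $k=1\in\des{u}$ also fails, so the improved criterion correctly reports $u\not\le_B v$.) The actual proof in \cite{BjBr96} does not proceed by this one-step descent; a correct argument must use the inequality at the \emph{next descent $\ge k$} rather than merely at $k+1$, exploiting that $u(k)<u(k+1)<\cdots<u(k')$ along the intervening ascent run.
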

Since we will need to apply the aforementioned criterion to establish incomparability in Bruhat order, the following definition will be useful.
\begin{defn}\label{def:witness}
If $u\nleq_B v$, then we call $p\in \des{u}$ a \emph{witness} if $u_{i,p}>v_{i,p}$ for some $1\leq i\leq p$.
\end{defn}

Given $u\leq_B v$, we let $[u,v]$ be the interval in Bruhat order containing all $w$ satisfying $u\leq_B w\leq_B v$.
We say that Bruhat intervals $[u,v]$ and $[p,q]$ are \emph{translation-equivalent} if left multiplication by $pu^{-1}$ restricts to an isomorphism of posets $[u,v]\xrightarrow{\sim}[p,q]$.
In particular we have the set-theoretic equality $pu^{-1}[u,v]=[p,q]$, or equivalently
\[
\{u^{-1}w\suchthat w\in [u,v]\}=\{p^{-1}r\suchthat r\in [p,q]\}.
\]
Some consequences of this definition will be used freely.
First, this isomorphism sends a cover $x\lessdot_B xt$ (with $t$ a transposition) to $pu^{-1}x\lessdot_B pu^{-1}xt$, and hence preserves the labelling of covers by transpositions.
Second, an isomorphism of intervals matches least and greatest elements, so $pu^{-1}u=p$ and $pu^{-1}v=q$; in particular $u^{-1}v=p^{-1}q$.

We record here a simple criterion that allows for producing certain translation-equivalent pairs. 
For the general statement for Coxeter groups we refer the reader to \cite[Proposition 1.9]{Del08} or \cite{duC00}.

\begin{prop}\label{pro:easy_bruhat_equivalent}
  Suppose $v\leq_B w$ where $v,w\in S_n$. 
Suppose $1\leq i\leq n-1$ is such that 
\begin{enumerate}[label=(\roman*)]
\item \label{item1} $v^{-1}(i)<v^{-1}(i+1)$,
\item \label{item2}$w^{-1}(i)<w^{-1}(i+1)$,
\item\label{item3} $s_iv\nleq_B w$.
\end{enumerate} 
Then left multiplication by $s_i=(s_iv)v^{-1}$ induces an isomorphism of posets $[v,w]\xrightarrow{\sim}[s_iv,s_iw]$; that is, $[v,w]$ and $[s_iv,s_iw]$ are translation-equivalent.
%Then left multiplication by the simple transposition $s_i$ induces a poset isomorphism between the  Bruhat intervals $[v,w]$ and $[s_iv,s_iw]$.
\end{prop}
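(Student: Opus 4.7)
The plan is to invoke a symmetric form of the Lifting Lemma for Bruhat order: for a simple reflection $s$ and elements $u, u' \in S_n$ satisfying $su >_B u$ and $su' >_B u'$, one has $u \leq_B u'$ if and only if $su \leq_B su'$. This is a routine consequence of the standard Lifting Lemma \cite[Proposition 2.2.7]{BjBr05}. Granting it, left multiplication by $s_i$ restricts to a poset isomorphism between the ``ascent side'' $A_i = \{x \in S_n \suchthat s_i x >_B x\}$ and the ``descent side'' $D_i = \{y \in S_n \suchthat s_i y <_B y\}$, with two-sided inverse also given by left multiplication by $s_i$. So the proof reduces to verifying the two inclusions $[v,w] \subseteq A_i$ and $[s_i v, s_i w] \subseteq D_i$; the two intervals then correspond bijectively under $s_i\cdot$, automatically preserving order, and hypotheses (i),(ii) together with the symmetric lemma ensure $s_i v \leq_B s_i w$ so that the target interval is genuinely nonempty.

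For $[v,w] \subseteq A_i$, I would argue by contradiction: suppose some $x \in [v, w]$ has $s_i x <_B x$. Then the standard Lifting Lemma applied to $v \leq_B x$, with $s_i v >_B v$ from (i) and $s_i x <_B x$ by the supposition, yields $s_i v \leq_B x$. Since $x \leq_B w$, this forces $s_i v \leq_B w$, directly contradicting (iii).

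For $[s_i v, s_i w] \subseteq D_i$, I would argue dually: suppose some $y \in [s_i v, s_i w]$ has $s_i y >_B y$. Applying the Lifting Lemma to $y \leq_B s_i w$, with $s_i y >_B y$ and $s_i (s_i w) = w <_B s_i w$ (from (ii)), yields $y \leq_B w$. Combined with $s_i v \leq_B y$ coming from the interval membership, we obtain $s_i v \leq_B w$, again contradicting (iii).

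The main subtlety in carrying out this plan is tracking signs and directions when applying the Lifting Lemma; the two inclusions above are arranged so that each invocation collapses to a direct contradiction with hypothesis (iii), with (i) supplying the ascent at the lower endpoint and (ii) supplying the descent at the upper endpoint in the dual argument. Once both inclusions are secured, the symmetric form of the lemma immediately upgrades the bijection $x \mapsto s_i x$ between $[v,w]$ and $[s_i v, s_i w]$ to a poset isomorphism, completing the proof.
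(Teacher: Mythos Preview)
Your argument via the Lifting Property is correct and complete. Note that the paper does not actually supply a proof of this proposition: it is stated as a known fact with references to \cite[Proposition 1.9]{Del08} and \cite{duC00} for the general Coxeter-group statement. Your write-up is the standard self-contained argument, using the Lifting Lemma twice to force $[v,w]$ into the left-ascent set for $s_i$ and $[s_iv,s_iw]$ into the left-descent set, each time collapsing to a contradiction with hypothesis~(iii); the symmetric form of the lemma then upgrades the bijection to an order isomorphism. This is precisely the mechanism underlying the cited results, so there is no methodological divergence to discuss.
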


The proposition is the special case $x=ww_{\circ}$ and $y=vw_{\circ}$ of \cite[Proposition 1.9]{Del08}.

\subsection{Rothe diagrams and dominant permutations}

Consider an $n\times n$ collection of boxes with rows labeled from $1$ to $n$ top to bottom and columns labeled from $1$ to $n$ left to right.
The \emph{Rothe diagram} of $w\in S_n$ is the collection of boxes
  \[
\rothe(w) = \{(w(j),i)\suchthat i<j,w(i)>w(j)\}.
  \]
Alternatively, $\rothe(w)$ is obtained by marking all boxes $b_i=(w(i),i)$ and crossing out all boxes below and to the right of $b_i$. 
The boxes that do not get crossed out form $\rothe(w)$.
The number of boxes in $\rothe(w)$ is equal to $\ell(w)$. 
See Figure~\ref{fig:rothe_43152} for two examples; the shaded boxes correspond to the cells of the Rothe diagram.

\begin{figure}[!h]
  \includegraphics[scale=0.6]{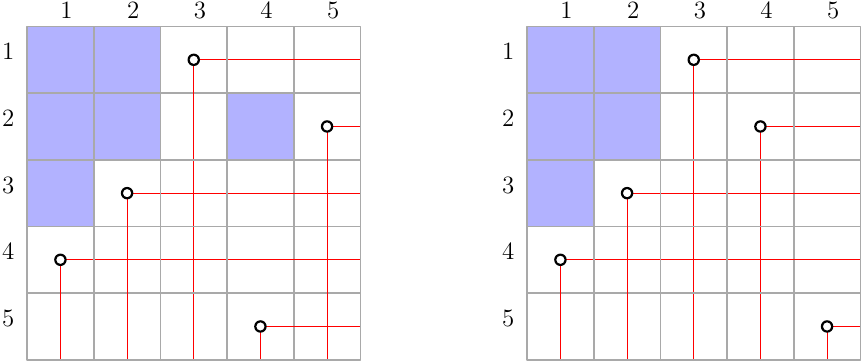}
  \caption{The Rothe diagrams for $43152$ (left) and dominant $43125$ (right).}
  \label{fig:rothe_43152}
\end{figure}
\begin{defn}
    A permutation is \emph{132-avoiding} or \emph{dominant} if there does not exist $i<j<k$ with $w(i)<w(k)<w(j)$.
\end{defn}
It is well known \cite{Knu68} that the set of dominant permutations in $S_n$ is enumerated by the Catalan number $\cat{n}\coloneqq \frac{1}{n+1}\binom{2n}{n}$; see also \cite[Chapter 2]{Man01}.
Note that $w$ is dominant if and only if $w^{-1}$ is dominant, since the pattern $132$ is an involution.
For $w\in S_n$ we write $\lcode{w}=(k_1,\ldots,k_n)$ for its \emph{Lehmer code}, where $k_i=\#\{j>i\suchthat w(j)<w(i)\}$.
As explained by Macdonald \cite[1.30]{Macdonald} (see also Manivel \cite[Chapter 2]{Man01}), a permutation $v$ is dominant if and only if its Rothe diagram is a Young diagram, in which case its row lengths are given by the Lehmer code $\lcode{v^{-1}}=(k_1,\ldots,k_p,0,\ldots,0)$ of $v^{-1}$.
See the right panel in Figure~\ref{fig:rothe_43152} for an example. In this case, we have $v^{-1}=34215$ and so its Lehmer code is $(2,2,1,0,0)$.

% It is well known \cite{Knu68} that the set of dominant permutations in $S_n$ is enumerated by the Catalan number $\cat{n}\coloneqq \frac{1}{n+1}\binom{2n}{n}$.
% Dominant permutations have Rothe diagram a Young diagram, whose row lengths are given by the Lehmer code $(k_1,\ldots,k_p,0,\ldots,0)$ of $v^{-1}$, i.e. with $k_i=\#\{j>i\suchthat v^{-1}(j)<v^{-1}(i)\}$.
% See the right panel in Figure~\ref{fig:rothe_43152} for an example. 
% In this case, we have $v^{-1}=34215$ and so its Lehmer code is $(2,2,1,0,0)$.

%%%%%%%%%%%%%%
\subsection{Schubert polynomials}
%%%%%%%%%%%%%%
We denote by $S_{\infty}=\langle s_1,s_2,\ldots\rangle$ the permutations of $\mathbb{N}$ with finite support, with $s_i=(i,i+1)$ the adjacent transpositions. We view $S_n\subset S_{\infty}$ as the subgroup generated by $s_1,\ldots,s_{n-1}$. We denote by $\mathbb{Z}[\xl_n]\coloneqq \mathbb{Z}[x_1,\ldots,x_n]$, and $\mathbb{Z}[\xl]\coloneqq \mathbb{Z}[x_1,x_2,\ldots]=\bigcup_{n=1}^{\infty}\mathbb{Z}[\xl_n]$. The group $S_{\infty}$ acts on $\mathbb{Z}[\xl]$ by having $s_i$ swap $x_i$ and $x_{i+1}$. The \emph{divided difference operations} are defined by $\partial_if=\frac{f-s_if}{x_i-x_{i+1}}$. These satisfy the \emph{nil-Coxeter relations} $\partial_i\partial_j=\partial_j\partial_i$ for $|i-j|\ge 2$, $\partial_i^2=0$, and the braid relations $\partial_i\partial_{i+1}\partial_i=\partial_{i+1}\partial_i\partial_{i+1}$. Because of this, we can define $\partial_w\coloneqq \partial_{i_1}\cdots \partial_{i_k}$ for any reduced word decomposition $w=s_{i_1}\cdots s_{i_k}$ of $w$.

The \emph{Schubert polynomials} $\{\schub{u}\suchthat u\in S_{\infty}\}\subset \mathbb{Z}[\xl]$ are the unique homogeneous polynomials with the property that $\ct\partial_w\schub{u}$ equals $1$ if $w=u$ and is $0$ otherwise. 
Alternately, they are characterized by $\schub{\idem}=1$ and
$$\partial_i\schub{u}=\begin{cases}\schub{us_i}&i\in \des{u}\\0&\text{otherwise.}\end{cases}$$
If $u\in S_n$, then for the longest element $w_{\circ}=n\,(n-1)\,\cdots\, 1$ we have $\schub{w_{\circ}}=x_1^{n-1}x_2^{n-2}\cdots x_{n-1}$, and we may explicitly compute $\schub{u}=\partial_{u^{-1}w_{\circ}}\schub{w_{\circ}}$.
% If $u\in S_n$ and $w_{\circ}=n(n-1)\cdots 1$ is the longest word in $S_n$, then $\schub{w_{\circ}}=x_1^{n-1}x_2^{n-2}\cdots x_{n-1}$, and we may explicitly compute $\schub{u}=\partial_{u^{-1}w_{\circ}}\schub{w_{\circ}}$.

We define the \emph{generalized Littlewood--Richardson coefficients} $c^w_{u,v}$ as the integers arising from the expansion
$$\schub{u}\schub{v}=\sum_w c^w_{u,v}\schub{w}.$$
Alternately, we may write $c^w_{u,v}=\ct\partial_w(\schub{u}\schub{v})$. It is known that if $u\not\le_B w$ or $v\not\le_B w$ then $c^w_{u,v}=0$.

The following fact will be essential for us.
%and follows from the combinatorial model for computing $c^w_{u,v}$ via Monk's rule in the Bruhat interval $[v,w]$.
\begin{fact}
\label{fact:translation_coeff}
    If $[v,w]$ is translation equivalent to $[v',w']$ then $c^w_{u,v}=c^{w'}_{u,v'}$. 
\end{fact}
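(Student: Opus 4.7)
The plan is to invoke Monk's rule iteratively to express $c^w_{u,v}$ as a count of saturated chains in the Bruhat interval $[v,w]$ labeled by right-transpositions, and then observe that translation equivalence preserves both the chains and their labels.

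Recall Monk's rule: for any $k\geq 1$ and $v\in S_\infty$,
\[
\schub{s_k}\schub{v}=\sum_{\substack{a\leq k<b\\ v\lessdot_B vt_{ab}}}\schub{vt_{ab}},
\]
where $t_{ab}$ is the transposition swapping $a$ and $b$. Fixing a reduced word $u=s_{i_1}\cdots s_{i_\ell}$, iteration of Monk's rule gives
\[
\schub{u}\schub{v}=\sum_{C}\schub{v_\ell},
\]
summed over saturated chains $C\colon v=v_0\lessdot_B v_1\lessdot_B\cdots\lessdot_B v_\ell$ in which each cover factors as $v_k=v_{k-1}t_{a_kb_k}$ with $a_k\leq i_k<b_k$. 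Extracting the coefficient of $\schub{w}$ gives
\[
c^w_{u,v}=\#\{\text{such chains }C\text{ ending at }w\}.
\]

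The central observation is that this count depends only on the Bruhat interval $[v,w]$ together with the right-reflection labeling of its covers: each cover $x\lessdot_B xt$ is labeled by the transposition $t$, which is intrinsic to the cover. Translation equivalence of $[v,w]$ with $[v',w']$ says exactly that left multiplication by $\sigma=v'v^{-1}$ is a set bijection $[v,w]\to[v',w']$, and since $y=xt$ immediately implies $\sigma(y)=\sigma(x)t$, this bijection tautologically preserves right-reflection labels.

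The one point that needs to be nailed down is that $\sigma$ is actually a poset isomorphism, i.e.\ sends covers to covers. This is not automatic for arbitrary left multiplication (lengths can change), but it follows from the translation-equivalence hypothesis: the set equality $\{v^{-1}x:x\in[v,w]\}=\{(v')^{-1}y:y\in[v',w']\}$ forces the two intervals to have identical internal Bruhat structure under the shift, in the same spirit as \Cref{pro:easy_bruhat_equivalent}. Once this mild step is established, $\sigma$ carries valid chains for $c^w_{u,v}$ bijectively to valid chains for $c^{w'}_{u,v'}$, matching the labels $(a_k,b_k)$ and hence the conditions $a_k\leq i_k<b_k$. This completes the argument; the main (small) obstacle is the poset-preservation check, everything else being a direct transport of structure.
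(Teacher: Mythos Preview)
Your approach matches the paper's one-line justification in spirit (``follows from the combinatorial model for computing $c^w_{u,v}$ via Monk's rule in the Bruhat interval''), but the central formula you wrote down is false. Iterating Monk's rule along a reduced word $s_{i_1}\cdots s_{i_\ell}$ for $u$ expands the product $\schub{s_{i_1}}\cdots\schub{s_{i_\ell}}\schub{v}$, not $\schub{u}\schub{v}$, and these differ in general. For instance with $u=321=s_1s_2s_1$ and $v=\idem$ one has $\schub{s_1}\schub{s_2}\schub{s_1}=x_1(x_1+x_2)x_1=x_1^3+x_1^2x_2=\schub{4123}+\schub{321}$, whereas $\schub{u}\schub{v}=\schub{321}=x_1^2x_2$; your chain count accordingly produces the chain $\idem\lessdot_B 213\lessdot_B 312\lessdot_B 4123$ and predicts $c^{4123}_{321,\idem}\ge 1$, but the correct value is $0$. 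What you actually need is the weaker statement that $c^w_{u,v}$ depends only on $u$ together with $[v,w]$ as a poset whose covers carry their right-transposition labels. This does follow from Monk's rule, but not via a naive closed chain formula: one can package Monk's rule as commuting operators $X_k$ on $\bigoplus_{x\in S_\infty}\ZZ[x]$ sending $[x]\mapsto\sum_{a\le k<b,\,x\lessdot_B xt_{ab}}[xt_{ab}]$, so that $c^w_{u,v}$ is the coefficient of $[w]$ in $\schub{u}(X_1,X_2-X_1,X_3-X_2,\ldots)[v]$; since these operators only move up in Bruhat order, this computation takes place entirely inside $[v,w]$ and reads only the edge labels.

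Your handling of the poset-isomorphism step is a second gap: you assert that the set equality defining translation equivalence forces $\sigma=v'v^{-1}$ to send covers to covers, but you give no argument, and it is not tautological (left multiplication by a fixed permutation does not preserve length differences in general). In the paper this never bites, because every instance of translation equivalence is produced by iterating \Cref{pro:easy_bruhat_equivalent}, whose conclusion \emph{is} a poset isomorphism. If you want the Fact under the bare set-equality hypothesis, the cleanest route is the geometric one the paper gives in \S\ref{sec:class_expansions}: the Pl\"ucker description of $X^w_v$ in \Cref{le:plucker_vanishing} depends only on the set $[v,w]$, so $X^{w'}_{v'}=(v'v^{-1})X^w_v$ follows directly, and equality of homology classes gives $c^w_{u,v}=c^{w'}_{u,v'}$.
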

There are several ways to see why this holds.
Translation-equivalence implies that the (skew) degree polynomials in the sense of Postnikov--Stanley \cite{PS09} are equal.
Indeed, this is an immediate consequence of the combinatorial description \cite[Section 12]{PS09} in terms of maximal chains in Bruhat order; the fact that translation-equivalence does not alter the transpositions labeling cover relations implies the claimed equality.
Fact~\ref{fact:translation_coeff} then follows from the fact that generalized Littlewood--Richardson coefficients are precisely the coefficients that arise when one decomposes skew degree polynomials in terms of ordinary degree polynomials; see \cite[Corollary 6.9]{PS09}.

We give an additional geometric proof in Section~\ref{sec:class_expansions}: translation-equivalence realizes $X^{w'}_{v'}$ as a left translate of $X^w_v$ by a permutation matrix (Proposition~\ref{pro:richardson_translate}), and since left translation acts trivially on homology the two Richardson varieties have the same Schubert expansion. See the discussion following Corollary~\ref{co:well_aligned_translate}. 
This proof is independent of Fact~\ref{fact:translation_coeff}, so no circularity arises from its forward reference.

%%%%%%%%%%
\section{Well-aligned pairs}
\label{sec:well_aligned_and_bruhat}
%%%%%%%%%%%%%%%%%%%%%%

The following map that inserts $1$ into a permutation shall be relevant to us.
Given a permutation $w\in S_{n-1}$ and a positive integer $j\in \{1,\dots,n\}$, let $\ins_{j}(w)\in S_n$ be the permutation obtained by inserting a $1$ in position $j$ in $w$ and then incrementing the previously existing numbers by $1$.
Similarly, given $w\in S_n$ we define  $\delta(w)\in S_{n-1}$  to be the permutation obtained by deleting $1$ from $w$ and then decrementing the remaining numbers by 1. Alternatively
$$(\ins_iw)(j)=\begin{cases}w(j)+1&j<i\\
1&j=i\\
w(j-1)+1&j>i\end{cases}\qquad\qquad\delta(w)(j)=\begin{cases}w(j)-1&j<w^{-1}(1)\\
w(j+1)-1&j\ge w^{-1}(1).\end{cases}$$
Writing permutations in one line notation, we have for instance that $\ins_1(25143)=136254$, $\ins_2(25143)=316254$, and $\delta(25143)=1432$.

%%%%%
\begin{defn}
\label{def:good}
    We say a pair of permutations $(v,w)\in S_n\times S_n$ is \emph{aligned} if the following hold:
    \begin{enumerate}
        \item $v^{-1}(1)\leq w^{-1}(1)$,
        \item all indices $v^{-1}(1)\leq i\leq w^{-1}(1)-1$ are ascents in $v$, i.e. satisfy $v(i)<v(i+1)$.
    \end{enumerate}
    We call $(v,w)\in S_n\times S_n$ \emph{well-aligned} if it is aligned and $(\delta(v),\delta(w))$ is well-aligned, with the convention that the unique pair $(\mathrm{id},\mathrm{id})\in S_0\times S_0$ is well-aligned as a base case.
    We let $\wellaligned{n}\subset S_n\times S_n$ denote the set of well-aligned pairs.
\end{defn}
%%%%%

%%%%%
\begin{eg}
\label{ex:from_martha_notes}
  We have $(\textcolor{red}{1}5726348, 75\textcolor{red}{1}82364)$ is aligned because $1<5<7$, and is in fact well-aligned, as successive applications of $\delta$ yield \begin{align*}(\textcolor{red}{1}5726348, 75\textcolor{red}{1}82364)\mapsto (46\textcolor{red}{1}5237,647\textcolor{red}{1}253)&\mapsto (354\textcolor{red}{1}26,536\textcolor{red}{1}42)\mapsto (243\textcolor{red}{1}5,4253\textcolor{red}{1})\\&\mapsto (\textcolor{red}{1}324,3\textcolor{red}{1}42)\mapsto (2\textcolor{red}{1}3,23\textcolor{red}{1})\mapsto (\textcolor{red}{1}2,\textcolor{red}{1}2)\mapsto (\textcolor{red}{1},\textcolor{red}{1}).\end{align*}
\end{eg}
%%%%%

%%%%%
\begin{eg}
\label{ex:w_wc}
  A class of well-aligned pairs arises organically in a geometric context in the authors' previous work \cite{nst_c} joint with Nadeau.
  Let $\cox=s_{n-1}\cdots s_2s_1\in S_n$ be a standard Coxeter element. 
  Then $(v,v\cox)$ is well-aligned for all $v\in S_n$ satisfying $v(n)=n$.
\end{eg}
%%%%%
The main result of this section will be \Cref{pr:to_dominant}, that for a well-aligned pair $(v,w)$, there is a distinguished dominant permutation $v^\uparrow$ with $[v,w]$ translation-equivalent to $[v^{\uparrow},v^{\uparrow}v^{-1}w]$.

%%%%%%%%%%%%%%%%%%%%%%
\subsection{Translation-equivalent Bruhat intervals and well-aligned pairs}
%%%%%%%%%%%%%%%%%%%%%%

Our next lemma relates well-aligned pairs to Bruhat order.
%%%%%
\begin{lem}\label{le:well_aligned_implies_bruhat}
For $v,w\in S_n$, 
  if $(v,w)\in \wellaligned{n}$ then $v\leq_B w$.
 \end{lem}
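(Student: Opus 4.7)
I would proceed by induction on $n$. The base case $n = 1$ is trivial (both permutations must equal the identity). For the inductive step, the recursive definition of well-aligned gives $(\delta(v), \delta(w)) \in \wellaligned{n-1}$, so $\delta(v) \leq_B \delta(w)$ by the inductive hypothesis, and the remaining task is to lift this comparison to $v \leq_B w$.

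Set $p \coloneqq v^{-1}(1)$ and $q \coloneqq w^{-1}(1)$, so that $v = \ins_p(\delta(v))$, $w = \ins_q(\delta(w))$, and $p \leq q$ by alignment condition (1). The plan is to factor the desired comparison through the intermediate permutation $\ins_p(\delta(w))$, i.e., to establish the two inequalities
\[
v = \ins_p(\delta(v)) \;\leq_B\; \ins_p(\delta(w)) \;\leq_B\; \ins_q(\delta(w)) = w.
\]
The first inequality keeps the $1$ fixed at position $p$ while moving $\delta(v)$ up to $\delta(w)$; the second keeps the non-$1$ entries fixed while sliding the $1$ from position $p$ rightward to position $q$.

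Both inequalities rest on a single reduced-word observation: if $s_{i_1} \cdots s_{i_k}$ is a reduced word for $\sigma \in S_{n-1}$, then
\[
s_{i_1+1} \cdots s_{i_k+1} \cdot s_1 s_2 \cdots s_{j-1}
\]
is a reduced word for $\ins_j(\sigma)$. Indeed, the shifted prefix realizes the permutation obtained by prepending $1$ to a rightward shift of $\sigma$, and then each $s_m$ in the appended tail swaps the current $1$ at position $m$ with a strictly larger entry, strictly increasing the length by one at each step. Given this, the subword criterion for Bruhat order dispatches both inequalities: for the first, starting from a reduced word for $\delta(w)$ containing a reduced subword for $\delta(v)$, one shifts indices up and appends the common tail $s_1 \cdots s_{p-1}$ to both words; for the second, the reduced word for $\ins_p(\delta(w))$ is literally a subword of the reduced word for $\ins_q(\delta(w))$ obtained by lengthening the tail from $s_1 \cdots s_{p-1}$ to $s_1 \cdots s_{q-1}$, which is valid precisely because $p \leq q$.

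The only technical point is the reduced-word formula for $\ins_j$, which requires an explicit but routine verification of the length count. I note in passing that this argument invokes only alignment condition (1) together with the recursive well-alignment of $(\delta(v), \delta(w))$; the ascent condition (2) does not enter the Bruhat comparability statement itself, and presumably earns its keep in the finer Bruhat-interval results of the subsequent section.
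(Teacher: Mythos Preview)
Your proof is correct and follows the same inductive skeleton as the paper: reduce to $\delta(v)\le_B\delta(w)$ and then factor $v\le_B w$ through an intermediate permutation. The details differ in two ways. First, you route through $\ins_p(\delta(w))$ (apply the inductive hypothesis, then slide the $1$ right), whereas the paper routes through $\ins_q(\delta(v))$ (slide the $1$ right first via the chain $v\lessdot_B vs_p\lessdot_B\cdots\lessdot_B vs_p\cdots s_{q-1}=\ins_q(\delta(v))$, then apply the inductive hypothesis). Second, you certify both inequalities with a reduced-word formula for $\ins_j$ and the subword criterion, while the paper uses explicit cover relations for the slide and the tableau criterion for the fact that $\ins_j$ preserves Bruhat order. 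Your reduced-word approach has the virtue of handling both steps uniformly; the paper's approach avoids the auxiliary length computation for $\ins_j(\sigma)$. Your closing remark is also correct: neither argument uses the ascent condition~(2), which indeed only becomes relevant later (e.g.\ in Lemma~\ref{le:translation_equivalent}).
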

\begin{proof}
  We establish the claim by induction on $n$. 
  Let $i=v^{-1}(1)$ and $j=w^{-1}(1)$, so that $v=\ins_j(\delta(v))s_{j-1}\cdots s_i$ and $w=\ins_j(\delta(w))$.
  %Let $i=v^{-1}(1)$ and $j=w^{-1}(1)$, so that $v=s_i\cdots s_{j-1}\ins_j(\delta(v))$ and $w=\ins_j(\delta(w))$. 
  We have
  $$
  v\le_B vs_i\le_B \cdots \le_B vs_i\cdots s_{j-1}=\ins_j(\delta(v))\le_B \ins_j(\delta(w)),
  $$
  where the first string of inequalities is because $(vs_i\cdots s_{k-1})(k)=v(i)=1<(vs_i\cdots s_{k-1})(k+1)$ and the last inequality is by the inductive hypothesis $\delta(v)\le_B \delta(w)$ and the fact that $\ins_j$ preserves the Bruhat order by the tableau criterion.
\end{proof}
%%%%%

% \begin{eg}
%   The pair $(v,w)=(15726348, 75182364)\in \wellaligned{8}$  satisfies $v\leq_B w$. 
%   To mimic the proof of Lemma~\ref{le:well_aligned_implies_bruhat}, one checks that $\delta(v)=4615237\leq_B \delta(w)=6471253$. 
%   This in turn implies that $\varepsilon_3(\delta(v))\leq_B \varepsilon_3(\delta(w))=w$.
%   Now note that that $v\leq_B vs_1\leq_B vs_1s_2=\varepsilon_3(\delta(v))\leq_B w$. 
% \end{eg}

The next result provides a converse when $v$ is dominant. 
%%%%%%%%%%%%%%
\begin{lem}
     Let $v\in S_n$ be dominant. Then $v \leq_B w$, if and only if $(v,w)\in \wellaligned{n}$.
\end{lem}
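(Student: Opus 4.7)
The forward direction (well-aligned implies $v\le_B w$) is \Cref{le:well_aligned_implies_bruhat}, so the task is the converse: if $v$ is dominant and $v\le_B w$, then $(v,w)\in \wellaligned{n}$. My plan is induction on $n$, reducing from $(v,w)$ to $(\delta(v),\delta(w))$, with the empty permutation as trivial base case.

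The alignment conditions should fall out quickly. Let $k=v^{-1}(1)$ and $\ell=w^{-1}(1)$. The dominance of $v$ combined with $v(k)=1$ forces $v(k)<v(k+1)<\cdots<v(n)$: any descent past position $k$ would, together with the $1$ at position $k$, form a $132$ pattern. Applying the tableau criterion in column $\ell$ gives $v_{1,\ell}\le w_{1,\ell}=1$, hence $k\le \ell$. Combined with the ascending tail of $v$, this yields both conditions of alignment.

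For the inductive step, I first observe that $\delta(v)$ is again dominant, since any $132$ pattern in $\delta(v)$ lifts (by reinserting the $1$ and incrementing) to a $132$ pattern in $v$. The crux is establishing $\delta(v)\le_B \delta(w)$, which I expect to be the main technical point. The direct tableau criterion runs into trouble in the intermediate range $k\le p\le \ell-1$, where the sorted rearrangements of the first $p$ entries of $\delta(v)$ and $\delta(w)$ are indexed differently (one has already excised its $1$ and the other has not), forcing a delicate comparison of the form $v_{i+1,p+1}\le w_{i,p}$. My proposal is to bypass this range entirely via the improved tableau criterion of \Cref{th:tableau_criterion}: since $v(k),v(k+1),\dots,v(n)$ is ascending, the permutation $\delta(v)$ has no descents at positions $\ge k$, so all descents of $\delta(v)$ lie in $\{1,\dots,k-1\}\subset \{1,\dots,\ell-1\}$. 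For such positions $p$, the first $p$ entries of $\delta(v)$ (resp.\ $\delta(w)$) are just the first $p$ entries of $v$ (resp.\ $w$) each decremented by $1$, so $\delta(v)_{i,p}=v_{i,p}-1\le w_{i,p}-1=\delta(w)_{i,p}$ follows directly from $v\le_B w$.

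The obstacle, as indicated, is precisely the awkward intermediate range; the improved tableau criterion is what lets me sidestep it by showing only the ``benign'' positions need to be checked. Once $\delta(v)\le_B \delta(w)$ is in hand, the inductive hypothesis applies to the dominant pair $(\delta(v),\delta(w))$, giving $(\delta(v),\delta(w))\in \wellaligned{n-1}$, and together with the alignment established above this gives $(v,w)\in \wellaligned{n}$.
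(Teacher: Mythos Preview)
Your proof is correct and follows the same inductive skeleton as the paper, but the tool you use at the key step is different. The paper invokes the characterization (cited from \cite{MJVKIM16}) that for dominant $v$ one has $v\le_B w\iff \rothe(v)\subseteq\rothe(w)$: alignment then comes from comparing the first rows of the Rothe diagrams, and $\delta(v)\le_B\delta(w)$ is deduced by arguing that $\rothe(\delta(v))\subseteq\rothe(\delta(w))$ follows from $\rothe(v)\subseteq\rothe(w)$ (with a picture doing much of the work). You instead stay entirely within the tableau criterion, using the improved version to restrict attention to descents $p\le k-1$ of $\delta(v)$, where the first $p$ entries of both $\delta(v)$ and $\delta(w)$ are simply decremented prefixes of $v$ and $w$.

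Your approach has the advantage of being fully self-contained---it does not import the Rothe diagram inclusion fact from the literature---and it pinpoints exactly why the awkward range $k\le p\le\ell-1$ never needs to be checked. The paper's approach is more visual and ties the argument to the Young diagram picture that recurs elsewhere in the paper (e.g.\ in \S\ref{sec:WAschubert} and \S\ref{sec:class_expansions}).
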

\begin{proof}
The previous lemma shows the reverse implication, so it remains to establish the forward implication.

    Let $j=v^{-1}(1)$. Since $v$ is dominant we must have that $1=v(j)<v(j+1)<\cdots <v(n)$.
    It is well known (see for instance \cite[\S 2]{MJVKIM16}) that for $v$ dominant,
    \begin{equation}
    \label{eqn:rotheBruhat}v\leq_B w \Longleftrightarrow \rothe(v)\subseteq \rothe(w).\end{equation}
    In particular $w^{-1}(1)\geq j$, and so $(v,w)$ is aligned.

    To finish the proof we need to show that $(\delta(v),\delta(w))\in \wellaligned{n-1}$.
    By induction it suffices to show that $\delta(v)\leq_B \delta(w)$.
    Note that $\delta(v)$ is dominant.
    Furthermore, since $\rothe(v)\subseteq \rothe(w)$, we have that $\rothe(\delta(v))\subseteq \rothe(\delta(w))$.
    This implies that $\delta(v)\leq_B \delta(w)$ as desired. See Figure~\ref{fig:rothe_inclusion_dominant} for an illustration of the inclusion of the Rothe diagrams.
\end{proof}
%%%%%%%%%%%%

\begin{figure}[!h]
  \includegraphics[scale=0.6]{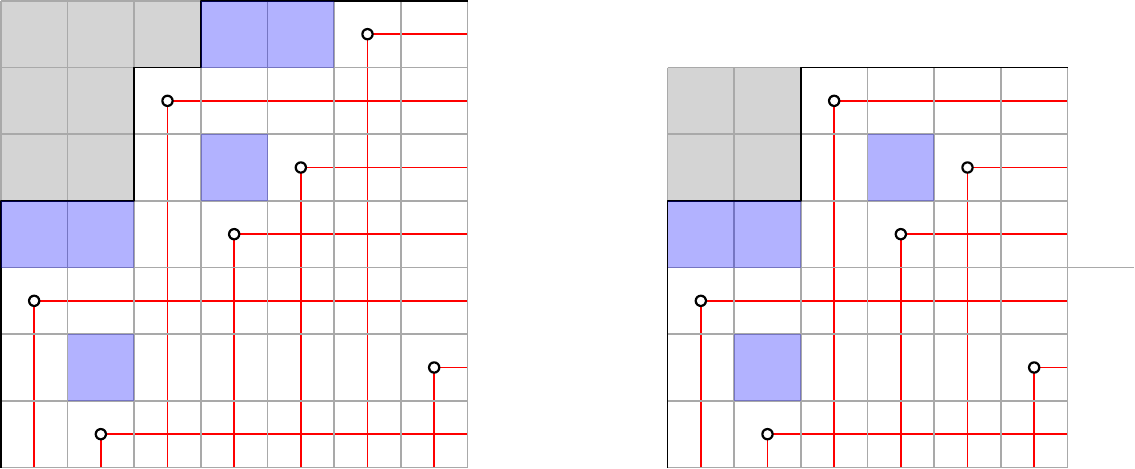}
  \caption{
The gray shaded cells correspond to the Rothe diagram of the dominant permutations $v=4521367$ and $\delta(v)=341256$ respectively, while the union of the gray and blue shaded cells correspond to the Rothe diagrams of $w=5724316$ and $\delta(w)=461325$ respectively.}  
  \label{fig:rothe_inclusion_dominant}
\end{figure}

Define
\[
\wellaligned{n}^{132}\coloneqq \{(v,w)\in \wellaligned{n}\suchthat v \text{ is  dominant}\}.
\]
\begin{rem}\label{re:double_factorial}
  Pairs $(v,w)\in S_n\times S_n$ with $v\leq_B w$ where $v$ is dominant are in bijection with matchings on $2n$ points \cite[\S 2]{MJVKIM16}.
  Thus $|\wellaligned{n}^{132}|=(2n-1)!!$.
\end{rem}

We emphasize a special instance of 132-patterns.
Given $w\in S_n$ we call $1\leq i\leq n-1$ \emph{critical} if $w$ possesses a subsequence of the form $\cdots  i \cdots j \cdots (i+1)\cdots $ where $j>i+1$. 
We let the set of critical values of $w$ be denoted by $C(w)$.
It is straightforward to check that 
\[C(w)
\text{ is empty } \Longleftrightarrow w\text{ is dominant}. 
\]

\begin{lem}\label{le:condition_2}
Suppose $(v,w)\in \wellaligned{n}$.
    Then we have $w^{-1}(i)<w^{-1}(i+1)$ for all $i\in C(v)$.
\end{lem}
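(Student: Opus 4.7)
The plan is induction on $n$. The base case $n\le 2$ is vacuous since no $v\in S_{\le 2}$ admits a $132$-pattern, so $C(v)=\emptyset$. For the inductive step, fix $(v,w)\in \wellaligned{n}$ and $i\in C(v)$. I split into two cases based on whether $i\ge 2$ (reduce to $\delta$ and invoke induction) or $i=1$ (direct contradiction with $v\le_B w$, which holds by Lemma \ref{le:well_aligned_implies_bruhat}, via the tableau criterion).

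For $i\ge 2$, I would pass to the well-aligned pair $(\delta(v),\delta(w))\in \wellaligned{n-1}$. A witnessing triple of positions $p<q<r$ in $v$ with $v(p)=i$, $v(q)=j>i+1$, $v(r)=i+1$ involves only values $\ge 2$, so after deleting the $1$ and decrementing, the same positions (possibly shifted uniformly) give the subsequence $i-1,\, j-1,\, i$ in $\delta(v)$ with $j-1>i$, showing $i-1\in C(\delta(v))$. By induction, $\delta(w)^{-1}(i-1)<\delta(w)^{-1}(i)$. To translate back, set $a=w^{-1}(i)$, $b=w^{-1}(i+1)$, $c=w^{-1}(1)$ (with $a,b\ne c$ since $i\ge 2$). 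The map $\delta$ sends $a$ to $a$ or $a-1$ depending on whether $a<c$ or $a>c$, and similarly for $b$. A four-case check on the signs of $a-c$ and $b-c$ shows that three cases immediately yield $a<b$, while the remaining case $a>c>b$ is incompatible with the inductive inequality ($a-1<b$ but simultaneously $a>c>b$), and so $w^{-1}(i)<w^{-1}(i+1)$.

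For $i=1$, set $m=w^{-1}(1)$ (the claim is vacuous if $m=1$) and $p=v^{-1}(1)$, and suppose for contradiction that $w^{-1}(2)<m$. The key geometric step is to establish $v^{-1}(2)>m$. From $1\in C(v)$ we get positions $p<q<v^{-1}(2)$ with $v(q)>2$. If $q>m$, then $v^{-1}(2)>q>m$ is immediate. If $q\le m$, then $q$ lies in the ascending run $v(p)<v(p+1)<\cdots<v(m)$ guaranteed by the alignment condition, so any position in the interval $(q,m]$ carries a $v$-value strictly larger than $v(q)>2$; since $v^{-1}(2)>q$, this forces $v^{-1}(2)>m$. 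Now the set $\{v(1),\ldots,v(m)\}$ contains $1$ but not $2$, so its second smallest element is $\ge 3$; meanwhile $\{w(1),\ldots,w(m)\}$ contains both $1$ and $2$ (by the contradiction hypothesis), so its second smallest element is exactly $2$. The tableau criterion (\Cref{th:tableau_criterion}) at $k=m$ is violated in coordinate $i=2$, contradicting $v\le_B w$.

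The main obstacle is the $i=1$ case, which requires carefully combining the ascending-run portion of the alignment hypothesis with the critical-subsequence hypothesis on $v$ in order to push $v^{-1}(2)$ past $m$. Once that position estimate is secured, the tableau criterion converts it cleanly into the desired contradiction, and the $i\ge 2$ case is an essentially bookkeeping reduction to the inductive hypothesis.
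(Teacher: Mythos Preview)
Your proof is correct and follows the same inductive skeleton as the paper: induct on $n$, reduce $i\ge 2$ to $i-1\in C(\delta(v))$ for the well-aligned pair $(\delta(v),\delta(w))$, and handle $i=1$ directly via the key observation $v^{-1}(2)>w^{-1}(1)$.

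Two minor differences are worth noting. For $i\ge 2$, your four-case bookkeeping is not needed: deleting the entry $1$ from the one-line notation of $w$ preserves the relative order of all remaining values, so $\delta(w)^{-1}(i-1)<\delta(w)^{-1}(i)$ is literally equivalent to $w^{-1}(i)<w^{-1}(i+1)$; the paper simply asserts this. For $i=1$, after establishing $v^{-1}(2)>w^{-1}(1)$ the paper reaches the contradiction differently: assuming $w^{-1}(2)<w^{-1}(1)$, one computes $\delta(v)^{-1}(1)=v^{-1}(2)-1\ge w^{-1}(1)>w^{-1}(2)=\delta(w)^{-1}(1)$, which violates the alignment of $(\delta(v),\delta(w))$ directly from the definition of well-aligned. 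Your route via the tableau criterion and $v\le_B w$ (Lemma~\ref{le:well_aligned_implies_bruhat}) is equally valid but invokes an extra lemma; the paper's argument stays entirely within the recursive definition.
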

\begin{proof}
  We first consider the case $i=1$.
  Since $(v,w)$ is aligned, the criticality of $1$ implies that we must have $w^{-1}(1)<v^{-1}(2)$.
  We claim this implies that $w^{-1}(1)<w^{-1}(2)$.
  Indeed, if $w^{-1}(1)>w^{-1}(2)$ then $(\delta(v),\delta(w))$  cannot be aligned which contradicts the well-alignedness of $(v,w)$.
  
  Now suppose $i>1$. 
  Note that if $i\in C(v)$, then $i-1\in C(\delta(v))$.
  Since $(\delta(v),\delta(w))$ is well-aligned, our inductive hypothesis implies that $(\delta(w))^{-1}(i-1)<(\delta(w))^{-1}(i)$.
  But then we necessarily have $w^{-1}(i)<w^{-1}(i+1)$.
\end{proof}

\begin{eg}
For $(v,w)=(15726348, 75182364)\in \wellaligned{8}$, we have $C(v)=\{1,2,5\}$. 
By inspection for each $i\in C(v)$ we have $i$ lying to the left of $i+1$ in $w$.
\end{eg}

\begin{lem}\label{le:well_aligned_preserved}
    Suppose $(v,w)\in \wellaligned{n}$.
    Then we have that $(s_iv,s_iw) \in \wellaligned{n}$ for all $i\in C(v)$.
\end{lem}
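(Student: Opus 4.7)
My plan is to induct on $n$, treating the cases $i\ge 2$ and $i=1$ separately. In each case I must verify that $(s_iv,s_iw)$ is aligned and that $(\delta(s_iv),\delta(s_iw))$ is well-aligned.

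For $i\ge 2$: Since $s_i$ fixes the value $1$, the positions of $1$ in $v$ and $w$ are preserved, and the first aligned condition is inherited from $(v,w)$. For the ascent condition, I would perform a local analysis near $a=v^{-1}(i)$ and $b=v^{-1}(i+1)$; the key point is that criticality $i\in C(v)$ (which forces $a,b$ to be non-adjacent) combined with monotonicity of $v$ on $[v^{-1}(1),w^{-1}(1)]$ rules out both $a$ and $b$ lying in this range, and a direct check handles the remaining local cases. The recursion is driven by the identity $\delta(s_iv)=s_{i-1}\delta(v)$ (and the analogue for $w$): after removing the $1$ and decrementing, the values $i,i+1$ at positions $a,b$ become $i-1,i$ at shifted positions, reproducing the swap one level down. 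Since $i\in C(v)$ immediately yields $i-1\in C(\delta(v))$, the inductive hypothesis applied to $(\delta(v),\delta(w))$ completes this case.

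For $i=1$: Write $p=v^{-1}(1)$, $q=v^{-1}(2)$, $r=w^{-1}(1)$, and $s=w^{-1}(2)$. Criticality of $1$ together with monotonicity of $v$ on $[p,r]$ forces $q>r$ (otherwise $v(p),\dots,v(q)$ would be a strictly increasing integer sequence from $1$ to $2$ of length $\ge 3$), while \Cref{le:condition_2} gives $r<s$. The aligned property of $(\delta(v),\delta(w))$, combined with $\delta(v)^{-1}(1)=q-1$ and $\delta(w)^{-1}(1)=s-1$, delivers $q\le s$, which is the first aligned condition for $(s_1v,s_1w)$. The ascent condition at positions $q,\ldots,s-1$ in $s_1v$ follows by translating the corresponding ascents in $\delta(v)$, together with the boundary observation that $v(q+1)\ge 3$ (forced by criticality). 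The crucial step is then the identity $\delta^2(s_1v)=\delta^2(v)$, and likewise for $w$: both iterated deletions strip off the values $1$ and $2$ from $v$ and decrement the remainder by two, and the relative order of the surviving entries is invariant to the order in which these deletions occur. Combined with a direct check that $(\delta(s_1v),\delta(s_1w))$ is aligned, this reduces well-alignedness of $(\delta(s_1v),\delta(s_1w))$ to that of $(\delta^2(v),\delta^2(w))$, which is inherited from $(\delta(v),\delta(w))\in\wellaligned{n-1}$.

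The main obstacle is the $i=1$ case: unlike for $i\ge 2$, swapping the values $1$ and $2$ does not commute with $\delta$---it repositions the eventual ``$1$'' in $\delta(s_1v)$ from position $q-1$ to position $p$. The identity $\delta^2(s_1v)=\delta^2(v)$, which absorbs this discrepancy after a second deletion, is the key that allows the recursion to close.
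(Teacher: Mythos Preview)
Your proposal is correct and follows essentially the same route as the paper. The two key identities you isolate---$\delta(s_iv)=s_{i-1}\delta(v)$ for $i\ge 2$ and $\delta^2(s_1v)=\delta^2(v)$ for $i=1$---are exactly the ones the paper uses, and your case split matches theirs. The only superficial differences are that the paper phrases the induction as being on $i$ rather than on $n$ (the reduction $(v,w,i)\mapsto(\delta(v),\delta(w),i-1)$ decreases both, so either works), and the paper dismisses the verification that $(s_iv,s_iw)$ is aligned as ``straightforward'' whereas you spell out the local analysis. Incidentally, for $i\ge 2$ your local analysis can be streamlined: since $a$ and $b$ are non-adjacent, swapping the values $i$ and $i+1$ preserves the ascent/descent status at \emph{every} position of $v$, so there is no need to argue separately about whether $a$ or $b$ falls inside $[v^{-1}(1),w^{-1}(1)]$.
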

\begin{proof}
  It is straightforward to see that 
 $(s_iv,s_iw)$ is aligned (when $i=1$ this uses that $(\delta(v),\delta(w))$ is aligned, as in the proof of Lemma~\ref{le:condition_2}), so it remains to show $(\delta(s_iv),\delta(s_iw))\in \wellaligned{n-1}$. 
 We show this by induction on $i$. 
 For $i=1$, begin by observing that $\delta(s_1v)$ and $\delta(s_1w)$ are obtained by deleting the $2$ from the one line notation of $v$ and $w$ respectively and then decrementing all entries greater than two by one.
  This implies that $(\delta(s_1v),\delta(s_1w))$ is aligned.
  Furthermore we have 
  \[
  (\delta\delta(s_1v),\delta\delta(s_1w))=(\delta\delta(v),\delta\delta (w)),
  \] 
  and the latter is well-aligned because $(v,w)$ is. 
  Thus $(\delta\delta(s_1v),\delta\delta(s_1w))\in \wellaligned{n-2}$, which in turn implies $(\delta(s_1v),\delta(s_1w))\in \wellaligned{n-1}$ and  concludes the base step of our induction.
  
  Now suppose $i>1$.
  Note that $(\delta(s_iv),\delta(s_iw))=(s_{i-1}\delta(v),s_{i-1}\delta(w))$. Furthermore, observe that $(\delta(v),\delta(w))\in \wellaligned{n-1}$ with $i-1\in C(\delta(v))$.
  The inductive hypothesis thus implies that $(s_{i-1}\delta(v),s_{i-1}\delta(w))\in \wellaligned{n-1}$, and therefore that $(\delta(s_iv),\delta(s_iw))\in \wellaligned{n-1}$.
  This concludes the induction step.
\end{proof}

\begin{lem}
\label{le:translation_equivalent}
  Suppose $(v,w)\in \wellaligned{n}$.
  For all $i\in C(v)$, we have that $[s_iv,s_iw]$ is translation-equivalent to $[v,w]$ .
\end{lem}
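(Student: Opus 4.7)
The plan is to verify the three hypotheses of Proposition~\ref{pro:easy_bruhat_equivalent} for the pair $(v,w)$ and the simple transposition $s_i$. That proposition will then supply a poset isomorphism $[v,w] \to [s_iv,s_iw]$ given by left multiplication by $s_i$. For any $x \in [v,w]$ one has $(s_iv)^{-1}(s_ix) = v^{-1}x$, so the set of relative positions $\{v^{-1}x : x \in [v,w]\}$ coincides with $\{(s_iv)^{-1}y : y \in [s_iv,s_iw]\}$, which is exactly the definition of translation equivalence.

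Conditions (i) and (ii) are immediate: (i) $v^{-1}(i) < v^{-1}(i+1)$ is built into the definition of $C(v)$, and (ii) $w^{-1}(i) < w^{-1}(i+1)$ is Lemma~\ref{le:condition_2}. The crux is condition (iii), $s_iv \not\leq_B w$. I plan to establish this by induction on $n$, with the statement vacuous for $n \leq 2$ (as then $C(v) = \emptyset$).

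For the base case $i = 1$, let $a = v^{-1}(1)$, $b = v^{-1}(2)$, and $q = w^{-1}(1)$. The well-aligned hypothesis yields $a \leq q$ together with the ascending chain $v(a) = 1 < v(a+1) < \cdots < v(q)$, while criticality $1 \in C(v)$ supplies some $j > 2$ at a position strictly between $a$ and $b$, forcing $b > a+1$. If $q \geq b$, then $v(b) = 2$ would lie in the ascending chain, forcing $v(a+1) = 2$ and hence $b = a+1$, a contradiction. Therefore $q < b$, so the value $1$ in $s_1v$ sits at position $b > q$. The tableau criterion is violated at $(k,\ell) = (q,1)$: $|\{j \leq q : s_1v(j) \leq 1\}| = 0 < 1 = |\{j \leq q : w(j) \leq 1\}|$, certifying $s_1v \not\leq_B w$.

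For the inductive step $i > 1$, the pair $(\delta(v), \delta(w))$ is well-aligned with $i - 1 \in C(\delta(v))$ (by the reduction arguments already used in Lemma~\ref{le:well_aligned_preserved}), so the inductive hypothesis gives $s_{i-1}\delta(v) \not\leq_B \delta(w)$. Using the identity $\delta(s_iv) = s_{i-1}\delta(v)$, which holds for $i > 1$ since the value $1$ is unaffected by $s_i$, it suffices to lift a witness for $s_{i-1}\delta(v) \not\leq_B \delta(w)$ back to one for $s_iv \not\leq_B w$. A violation of the form $|\{j \leq k' : \delta(s_iv)(j) \leq \ell'\}| < |\{j \leq k' : \delta(w)(j) \leq \ell'\}|$ transfers through the position-and-value shifts induced by $\delta$ (shifting $\ell' \mapsto \ell' + 1$ and adjusting $k'$ by at most one according to the positions of $1$). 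The delicate case is when $k'$ falls in the range $v^{-1}(1) \leq k' < w^{-1}(1)$, where the different locations of $1$ in $v$ and $w$ produce a one-off discrepancy; closing this gap is the main technical obstacle, and the ascending chain $v(v^{-1}(1)), \ldots, v(w^{-1}(1))$ stipulated by alignment is precisely what allows the lifted inequality to remain strict.
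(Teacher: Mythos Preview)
Your approach is the same as the paper's: verify conditions (i)--(iii) of Proposition~\ref{pro:easy_bruhat_equivalent}, with the real work going into (iii) via induction on $n$, lifting a tableau-criterion witness from $(\delta(s_iv),\delta(w))$ back to $(s_iv,w)$. Your base case $i=1$ is correct and essentially matches the paper's (the paper argues $v^{-1}(2)>w^{-1}(1)$ in one line from the aligned chain, but your route is fine).

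The inductive step, however, is only a sketch. You correctly isolate the delicate range $a\coloneqq v^{-1}(1)\le k'<b\coloneqq w^{-1}(1)$ where the insertions $\ins_a,\ins_b$ shift positions by different amounts, and you assert that the ascending chain $v(a)<\cdots<v(b)$ ``is precisely what allows the lifted inequality to remain strict.'' But you never explain \emph{how}, and as stated your rank-violation at an arbitrary $(k',\ell')$ need not lift through that range. The missing ingredient is the improved tableau criterion: you may take the witness $p$ to be a \emph{descent} of $\delta(s_iv)=s_{i-1}\delta(v)$. The point is then that $i\in C(v)$ forces $\des{s_iv}=\des{v}$ (the values $i,i+1$ are non-adjacent in $v$, so swapping them changes no local comparison), and alignment gives $\des{v}\cap[a,b-1]=\emptyset$. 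Translating through $\delta$, this means $\des{\delta(s_iv)}\cap[a,b-2]=\emptyset$, so any descent witness $p$ satisfies either $p<a$ or $p\ge b-1$. In the first case $k=p$ lifts the violation; in the second $k=p+1$ does. This is exactly what the paper does, tersely phrased as ``if $a\le p$ then $b\le p+1$ by alignedness.'' Without pinning the witness to a descent and showing descents avoid the aligned range, your lift does not go through.
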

\begin{proof}
  We induct on $i$, verifying conditions in  Proposition~\ref{pro:easy_bruhat_equivalent}\ref{item1}--\ref{item3}.
  Condition~\ref{item1} holds because $i\in C(v)$.
  Condition~\ref{item2} is the content of Lemma~\ref{le:condition_2}. 
  It remains to check Condition~\ref{item3} that $s_iv\nleq w$.

  Suppose $i=1$. 
  Let $j=v^{-1}(2)$ and $k=w^{-1}(1)$, and note that $1\in C(v)$ implies that $j>k$.
  Therefore $\min\{(s_1v)(1),\ldots, (s_1v)(k)\}=2>1=\min\{w(1),\ldots,w(k)\}$ and we conclude $s_1v \nleq_B w$ by the tableau criterion.

  Now suppose $i>1$.
  Then $i-1\in C(\delta(v))$ so our inductive hypothesis yields $
  s_{i-1}\delta(v)\nleq_B \delta(w)$.
  Let $p\in \des{s_{i-1}\delta(v)}$ be a witness for this incomparability; see Definition~\ref{def:witness}.
  Suppose that 
  \[
  s_iv=\ins_a(s_{i-1}\delta(v))\quad\text{and}\quad w=\ins_b(\delta(w)).
  \]
  Note that $a\leq b$ since $(v,w)$ is aligned. If furthermore $a\leq p$ then $b\leq p+1$ by alignedness of $(v,w)$, so $p+1\in \des{s_iv}$ is a witness for $s_iv\nleq_B w$.
  On the other hand if $a> p$, then $b>p$ by alignedness of $(v,w)$ so $p\in \des{s_iv}$ witnesses the incomparability $s_iv\nleq_B w$.
  %The aligned-ness of $(v,w)$ forces that either the $1$ is inserted to the left of position $p$ or to the right of position $p$ in both $v$ and $w$.
  %Thus we see that $s_iv$ and $w$ inherit a witness from $s_{i-1}\delta(v)$ and $\delta(w)$, and hence $s_iv\nleq_B w$ by Theorem~\ref{th:tableau_criterion}.
\end{proof}

We illustrate various aspects of the preceding lemmas with the following example.
\begin{eg}
For $(v,w)=(15726348, 75182364)\in \wellaligned{8}$, we have $C(v)=\{1,2,5\}$. 
It is easily checked that $s_1v\nleq_B w$.

Consider $i=2$. 
We will show how a witness for $s_1\delta(v)\nleq_B \delta(w)$ produces one for $s_2v\nleq_B w$.
We have $\delta(v)=4615237$ and $\delta(w)=6471253$.
Since the first four entries of $s_1\delta(v)$ in increasing order are $2,4,5,6$ while those of $\delta(w)$ are $1,4,6,7$, we have that $p=4$ witnesses $s_1\delta(v)\nleq_B \delta(w)$.
Now note that 
\[
s_2v=15736248=\ins_1(s_1\delta(v))\quad\text{and}\quad w=\ins_3(\delta(w)).
\] 
Therefore, $p+1=5$ witnesses $s_2v\nleq_B w$.

Finally, consider $i=5$.
Since the first two entries of $s_4\delta(v)=5614237$ in increasing order are $5,6$ while those of $\delta(w)$ are $4,6$, we have that $p=2$ witnesses $s_4\delta(v)\nleq_B \delta(w)$.
Now note that 
\[
s_5v=16725348=\ins_1(s_4\delta(v))\quad\text{and}\quad w=\ins_3(\delta(w)).
\] 
Therefore, $p+1=3$ witnesses $s_5v\nleq_B w$.
\end{eg}

%%%%%%%%%%%%%
\subsection{Labeled plane binary trees and dominant permutations}
%%%%%%%%%%%%%%

Recall that a \emph{plane binary tree} $T$ is a rooted tree where every internal (i.e. non-leaf) node has a left and right subtree. We denote the set of such trees with $n$ internal nodes by $\mc{T}_n$. A decreasing labeling $\mathcal{L}$ of $T\in \mc{T}_n$ is a bijection of the internal nodes with $[n]$ such that whenever $w$ is a descendent of $v$, the label of $w$ is less than the label of $v$. We now recall a folklore bijection \cite[pp. 23--24]{St97}
\begin{align*}
S_n&\to \{(T,\mathcal{L})\suchthat T\in \mc{T}_n\text{ and }\mathcal{L}\text{ is a decreasing labeling of $T$}\}\\
w&\mapsto (\psi(w),\mathcal{L}(w))
\end{align*}
 though we follow the conventions in \cite[\S 2.2]{LoRo98}.

The bijection is obtained by recursively applying the following procedure, starting from the one-line notation $w(1)\cdots w(n)$ of $w$: for a word of distinct numbers $z$ we write $z=z^{(1)}mz^{(2)}$ with $m=\max(z)$ and associate to it the tree whose root is labeled $m$ and whose left and right subtrees are given recursively by applying this procedure to the words $z^{(1)}$ and $z^{(2)}$ respectively.

Given $T\in \mc{T}_n$, consider the fiber 
\[
Z_T=\{w\in S_n\suchthat \psi(w)=T\}.
\]
It is well known that $Z_T$ has a unique maximal element $w^{\uparrow}$ under left weak order, characterized by the fact that it is the unique dominant permutation in $Z_T$ (see \cite[Theorem 2.5]{LoRo02} and also \cite[\S 1.2]{AS06}). 
In particular for dominant permutations we have $w=w^{\uparrow}$.
We can describe $w^{\uparrow}$ explicitly as the unique permutation in $Z_T$ such that $\psi(w^{\uparrow})$ has the property that for any node $v$, the  smallest label in the left subtree of $v$ is greater than the largest label in the right subtree of $v$.
Figure~\ref{fig:T_and_Z_T} shows an unlabeled plane binary tree $T$ on the left and the set $Z_T$ on the right, with the dominant permutation highlighted.

\begin{figure}[!ht]
\includegraphics[scale=0.8]{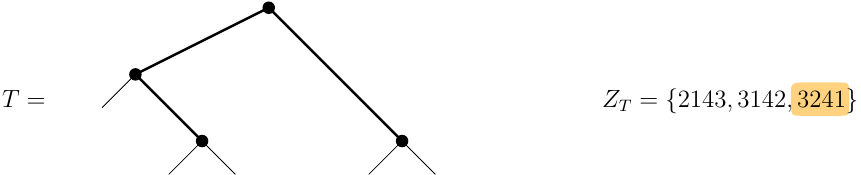}
\caption{A plane binary tree $T$ and the set $Z_T$.}
\label{fig:T_and_Z_T} 
\end{figure}

Note that $i\in C(w)$ is equivalent to stating that the nodes labeled $i$ and $i+1$ are incomparable in $T$ with the node labeled $i$ appearing before that labeled $i+1$ in the inorder traversal.
Because all linear extensions of a poset can be obtained by swapping labels of adjacent incomparable nodes, by repeatedly applying simple transpositions corresponding to the entries in the critical set we may transform $w$ to $w^{\uparrow}$ without altering the underlying tree.
Figure~\ref{fig:three_extensions} illustrates this starting with $w=2143$ and applying swaps corresponding to the critical values.

\begin{figure}[!ht]
\includegraphics[scale=0.8]{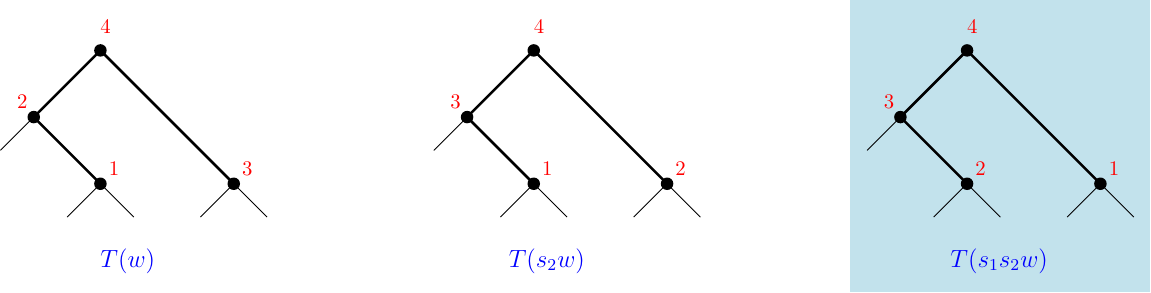}
\caption{Three decreasing trees with the leftmost tree associated to $w=2143$ and the rightmost corresponding to the dominant permutation $w^{\uparrow}=s_1s_2w=3241$.}
\label{fig:three_extensions}
\end{figure}

%  \begin{figure}
%  \includegraphics[scale=0.8]{tree_and_extensions.pdf}
%  \caption{Three decreasing trees with the rightmost corresponding to the dominant permutation $w^{\uparrow}$.
%  \label{fig:three_extensions}
%  \end{figure}

%%%%%%%%%%%%%%%%%%%%%%

We are now ready to prove the main result of this section.
\begin{prop}\label{pr:to_dominant}
    Suppose $(v,w)\in\wellaligned{n}$.
    Then the following hold.
    \begin{enumerate}
    \item $(v^{\uparrow},v^{\uparrow}v^{-1}w)\in\wellaligned{n}^{132}$.
    \item The intervals $[v,w]$ and $[v^{\uparrow},v^{\uparrow}v^{-1}w]$ are translation-equivalent.
    \end{enumerate}
\end{prop}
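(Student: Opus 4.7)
My plan is to induct on the non-negative integer $\ell(v^{\uparrow}) - \ell(v)$, which measures how far $v$ sits from its dominant representative within the fiber $Z_{\psi(v)}$. The inductive step is a direct combination of Lemma~\ref{le:well_aligned_preserved} and Lemma~\ref{le:translation_equivalent}; termination at $v^{\uparrow}$ is guaranteed by the tree-preservation principle recorded in the paragraphs preceding the proposition.

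For the base case $\ell(v^{\uparrow}) - \ell(v) = 0$, we have $v = v^{\uparrow}$, so $v$ is dominant and $C(v)$ is empty. Both conclusions are then immediate: $(v,w) \in \wellaligned{n}^{132}$ holds by hypothesis, and since $v^{\uparrow} v^{-1}w = w$ the two intervals in question coincide.

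For the inductive step, assume $v$ is not dominant and pick any $i \in C(v)$. Lemma~\ref{le:well_aligned_preserved} yields $(s_iv, s_iw) \in \wellaligned{n}$, while Lemma~\ref{le:translation_equivalent} yields translation-equivalence of $[v,w]$ with $[s_iv, s_iw]$. The swap $v \mapsto s_iv$ transposes the labels of two incomparable nodes of $T = \psi(v)$ and leaves the underlying tree unchanged; hence $\psi(s_iv) = T$ and $(s_iv)^{\uparrow} = v^{\uparrow}$. Because $i$ precedes $i+1$ in the one-line notation of $v$, passing to $s_iv$ introduces exactly one new inversion, so $\ell(s_iv) = \ell(v) + 1$ and the inductive hypothesis applies to $(s_iv, s_iw)$. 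Using the identity $(s_iv)^{-1}(s_iw) = v^{-1}s_is_iw = v^{-1}w$, the transported pair $((s_iv)^{\uparrow},\, (s_iv)^{\uparrow} (s_iv)^{-1}(s_iw))$ coincides with $(v^{\uparrow},\, v^{\uparrow} v^{-1}w)$, yielding the first claim. For the second claim, combine the translation-equivalence of $[v,w]$ with $[s_iv, s_iw]$ with the translation-equivalence of $[s_iv, s_iw]$ with $[v^{\uparrow}, v^{\uparrow} v^{-1}w]$ supplied by the inductive hypothesis, and invoke transitivity.

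The only mild obstacle I anticipate is the tree-level observation that $\psi(s_iv) = \psi(v)$ when $i \in C(v)$, so that $s_iv$ lies in the same fiber $Z_T$ as $v$ and ultimately limits to the same dominant representative. However this is precisely what the preceding discussion of decreasing labelings and swaps of incomparable nodes is designed to supply, so it can be invoked with no additional work; everything else in the argument is formal bookkeeping.
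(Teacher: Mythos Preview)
Your proof is correct and follows essentially the same route as the paper's: repeatedly apply a critical swap $s_i$ (invoking Lemmas~\ref{le:well_aligned_preserved} and~\ref{le:translation_equivalent}) to climb within the fiber $Z_{\psi(v)}$ until reaching the dominant representative $v^{\uparrow}$, tracking that $v^{-1}w$ is preserved at each step. You have merely organized the argument as a formal induction on $\ell(v^{\uparrow})-\ell(v)$ and made the identity $(s_iv)^{-1}(s_iw)=v^{-1}w$ explicit, whereas the paper phrases the same iteration informally.
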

\begin{proof}
  If $v$ is dominant then there is nothing to show as $v=v^{\uparrow}$.
  Thus we may assume that $v$ is not dominant, i.e. $C(v)\neq \emptyset$.
  Pick any $i\in C(v)$.
  Then $(s_iv,s_iw)\in \wellaligned{n}$ by Lemma~\ref{le:well_aligned_preserved}.
  Furthermore we know that $\ell(s_iv)=\ell(v)+1$ and $\ell(s_iw)=\ell(w)+1$.
  Repeating this procedure eventually produces a $(v',w')\in \wellaligned{n}^{132}$ so that $[v',w']$ is translation-equivalent to $[v,w]$.
  Since $v$ and $v'$ are in the same fiber of $\psi$, we have $v'=v^{\uparrow}$.
  Both claims now follow.
\end{proof}

\begin{eg}
For $(v,w)=(15726348, 75182364)\in \wellaligned{8}$, we have $v^{\uparrow}=56734128$. Proposition~\ref{pr:to_dominant} guarantees that $[v,w]$ is translation-equivalent to $[v^{\uparrow},v^{\uparrow}v^{-1}w]=[56734128, 76583142]$.
\end{eg}

\begin{rem}
  We revisit the well-aligned pairs from Example~\ref{ex:w_wc}.
  In this case, Proposition~\ref{pr:to_dominant} guarantees that the Bruhat interval $[v,v\cox]$ is translation-equivalent to $[v^{\uparrow},v^{\uparrow}\cox]$ where $v^{\uparrow}\in S_n$ is dominant and satisfies $v^{\uparrow}(n)=n$ as $v$ itself does. 
  In particular there are $\cat{n-1}$ many such intervals, and it is a fact that they are all pairwise inequivalent with respect to translation. 
  Translating these special intervals by $(v^{\uparrow})^{-1}$ produces noncrossing partitions, and this connection is the first step toward understanding quasisymmetry in the context of $\flag{n}$ \cite{bgnst_2,nst_c}.
\end{rem}

\section{Well-aligned Schubert structure coefficients}
\label{sec:WAschubert}
Recall that we want to determine the structure coefficients $c^w_{u,v}$ where $(v,w)\in \wellaligned{n}$.
\subsection{Coefficients via translation-equivalence}
We recall Sottile's Pieri rule \cite{Sot96} in a special case as it will be the only combinatorial gadget that we need.
We momentarily work in the infinite symmetric group $S_{\infty}$.
For $1\leq a<b$, let $t_{ab}\in S_{\infty}$ be the transposition swapping $a$ and $b$.

%%%%%%%%%%%%%%%%%%%%%%%
\begin{defn}\label{de:k_bruhat}
Given a nonnegative integer $k$, we say that a permutation $u$ is covered by $ut_{ab}$ in \emph{$k$-Bruhat order} if $a\leq k<b$ and $\ell(u\,t_{ab})=\ell(u)+1$.
The $k$-Bruhat order $u\leq_B^k w$ is obtained as the transitive closure of these covers.
\end{defn}
%%%%%%%%%%%%%%%%%%%%%%%

The saturated chains that matter to us are 
\[
  u\lessdot_B^k u\,t_{a_1b_1}\lessdot_B^k u \,t_{a_1b_1}\,t_{a_2b_2}\lessdot_B^k \cdots \lessdot_B^k u \,t_{a_1b_1}\cdots t_{a_kb_k}=w
\]
where the $t_{a_ib_i}$ are transpositions with $a_1$ through $a_k$ distinct and each $a_i\leq k<b_i$.
We write $u\stackrel{k}{\to} w$ if there \emph{exists} a chain as above from $u$ to $w$.

\begin{thm}[{\cite[Theorem 1]{Sot96}}]
\label{th:sottile_pieri}
Fix $k$ a nonnegative integer and let $u\in S_{\infty}$.
Then we have
\begin{align*}
x_1\cdots x_k\schub{u}=\sum
 \schub{w},
\end{align*}
where the sum ranges over all $w$ with $u\stackrel{k}{\to}w$. 
In particular, this expansion is multiplicity-free.
\end{thm}
Recall that for a dominant permutation $v$ the Schubert polynomial is the monomial $\schub{v}=x_1^{c_1}x_2^{c_2}\cdots$ with exponents the Lehmer code $\lcode{v}=(c_1,c_2,\dots)$ of $v$; see \cite[Chapter 2]{Man01}. Since $\lcode{v}$ and $\lcode{v^{-1}}$ are conjugate partitions for $v$ dominant, writing the Lehmer code of $v^{-1}$ as $(k_1,\dots,k_p,0,0,\dots)$ we equivalently have
$$\schub{v}=\prod_{1\leq i\leq p}x_1\cdots x_{k_i},$$
a dominant monomial.
Iterating the Pieri rule in Theorem~\ref{th:sottile_pieri} tells us how to compute the product of an arbitrary Schubert polynomial with such a dominant monomial in the Schubert basis.
Suppose we have a sequence $\mathbf{k}=(k_1\geq \cdots \geq k_p)$ of nonnegative integers and a permutation $u\in S_{\infty}$.
We write $u\stackrel{\mathbf{k}}{\to} w$ for a sequence of permutations $u=u_0,u_1,\dots,u_p=w$ such that $u_{i-1}\stackrel{k_i}{\to} u_i$ for $1\leq i\leq p$.
Note that a given $w$ may arise from several such sequences.

% Note that for $v$ a dominant permutation with Lehmer code of $v^{-1}$ given by $(k_1,\dots,k_p,0,0,\dots)$, we have
% $$\schub{v}=\prod_{1\leq i\leq p}x_1\cdots x_{k_i},$$
% a dominant monomial.
% Iterating this Pieri rule tells us how to compute the product of an arbitrary Schubert polynomial with such a dominant monomial in the Schubert basis.
% Suppose we have a sequence $\mathbf{k}=(k_1\geq \cdots \geq k_p)$ of nonnegative integers and a permutation $u\in S_{\infty}$.
% We write $u\stackrel{\mathbf{k}}{\to} w$ if there is a sequence of permutations $u=u_0,u_1,\dots,u_p=w$ such that $u_{i-1}\stackrel{k_i}{\to} u_i$ for $1\leq i\leq p$.

We thus have the following corollary.
\begin{cor}
\label{co:iterated_pieri}
Let $v\in S_{\infty}$ be dominant with Lehmer code of $v^{-1}$ given by $\mathbf{k}=(k_1,\ldots,k_p,0,0,\ldots)$. 
Then we have
\begin{align*}
\schub{v}\schub{u}=\sum_{u\stackrel{\mathbf{k}}{\to} w}
 \schub{w},
\end{align*}
where the sum ranges over all $u\stackrel{\mathbf{k}}{\to} w$.
In particular the Schubert structure coefficients $c^w_{u,v}$ can be computed combinatorially. Unlike the expansion in Theorem~\ref{th:sottile_pieri} this one need not be multiplicity-free.
\end{cor}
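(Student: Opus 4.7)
The plan is to iterate Sottile's Pieri rule \Cref{th:sottile_pieri} $p$ times. The key observation is that the dominant monomial formula stated just before the corollary gives the factorization
\begin{equation*}
\schub{v}\schub{u} = (x_1\cdots x_{k_1})(x_1\cdots x_{k_2})\cdots (x_1\cdots x_{k_p})\,\schub{u},
\end{equation*}
so we can apply \Cref{th:sottile_pieri} once for each factor $x_1\cdots x_{k_i}$, starting from the innermost (rightmost) factor and working outward.

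I would proceed by induction on $p$. The base case $p=0$ is vacuous since $\schub{v}=1$ and the only $\mathbf{k}$-chain from $u$ is the trivial one ending at $u$. For the inductive step, write $\mathbf{k}'=(k_2,\ldots,k_p)$ and let $v'$ be the dominant permutation whose inverse has Lehmer code $(k_2,\ldots,k_p,0,\ldots)$, so $\schub{v}=x_1\cdots x_{k_1}\cdot \schub{v'}$. Applying \Cref{th:sottile_pieri} gives
\begin{equation*}
\schub{v}\schub{u} \;=\; x_1\cdots x_{k_1}\bigl(\schub{v'}\schub{u}\bigr)\;=\; x_1\cdots x_{k_1}\sum_{u\stackrel{\mathbf{k}'}{\to} u'}\schub{u'}\;=\;\sum_{u\stackrel{\mathbf{k}'}{\to} u'}\sum_{u'\stackrel{k_1}{\to}w}\schub{w},
\end{equation*}
where the second equality uses the inductive hypothesis. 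By the definition of $u\stackrel{\mathbf{k}}{\to}w$ (concatenation of $k_i$-Bruhat chains), this double sum reorganizes exactly as a sum over $\mathbf{k}$-chains $u\stackrel{\mathbf{k}}{\to}w$, which is the desired identity.

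The combinatorial computability of $c^w_{u,v}$ follows because each individual $k_i$-Bruhat cover is a purely combinatorial condition on transpositions (see \Cref{de:k_bruhat}), and thus enumerating all saturated chains of the prescribed shape $\mathbf{k}$ from $u$ to $w$ is a finite combinatorial task. I expect no genuine obstacle here: the only bookkeeping subtlety is ensuring that the order of iteration (working from the right of the product) matches the definition of the concatenation $u\stackrel{\mathbf{k}}{\to}w$, which is harmless because the statement is symmetric in the choice of which factor we peel off first (any ordering of the $k_i$ will give the same sum, though the natural bijection with chains uses a fixed convention).
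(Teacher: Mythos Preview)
Your proposal is correct and is exactly the iteration argument the paper has in mind; the paper does not spell out a proof but simply says ``Iterating this Pieri rule tells us how to compute the product of an arbitrary Schubert polynomial with such a dominant monomial in the Schubert basis'' immediately before stating the corollary. One minor bookkeeping remark: with your choice of peeling off $x_1\cdots x_{k_1}$ on the outside, the concatenated chain naturally has step-sequence $(k_2,\ldots,k_p,k_1)$ rather than $(k_1,\ldots,k_p)$ --- you correctly note this is harmless since all orderings yield the same Schubert expansion, but if you instead peel off $x_1\cdots x_{k_p}$ (writing $\schub{v}=x_1\cdots x_{k_p}\cdot\schub{v'}$ with $\mathbf{k}'=(k_1,\ldots,k_{p-1})$) the chain order lines up with the paper's definition of $u\stackrel{\mathbf{k}}{\to}w$ on the nose, avoiding the parenthetical justification entirely.
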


We are now in a position to state our main result.
\begin{thm}\label{th:well_aligned_to_dominant}
Let $(v,w)\in \wellaligned{n}$.
Then
$$c^w_{u,v}=c^{v^{\uparrow}v^{-1}w}_{u,v^{\uparrow}},$$
and these coefficients can be computed combinatorially.
\end{thm}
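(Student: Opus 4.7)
The plan is to prove this theorem by directly combining three ingredients already established in the paper, so essentially no new work is needed beyond assembling them in the right order.

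First I would invoke \Cref{pr:to_dominant}, which says that for any well-aligned pair $(v,w)\in\wellaligned{n}$, the pair $(v^{\uparrow},v^{\uparrow}v^{-1}w)$ lies in $\wellaligned{n}^{132}$ (in particular $v^{\uparrow}$ is dominant) and the Bruhat intervals $[v,w]$ and $[v^{\uparrow},v^{\uparrow}v^{-1}w]$ are translation-equivalent. This is the structural backbone: it converts an arbitrary well-aligned pair into one with a dominant left endpoint without changing the Bruhat interval up to left translation.

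Next I would apply the boxed fact from the Schubert polynomials subsection: translation-equivalence of Bruhat intervals preserves Schubert structure constants, so that
\[
c^w_{u,v}=c^{v^{\uparrow}v^{-1}w}_{u,v^{\uparrow}}.
\]
This gives the identity in the statement. Finally, since $v^{\uparrow}$ is dominant, \Cref{co:iterated_pieri} provides an explicit combinatorial formula for $c^{v^{\uparrow}v^{-1}w}_{u,v^{\uparrow}}$ as a sum over chains $u\stackrel{\mathbf{k}}{\to} v^{\uparrow}v^{-1}w$ in the appropriate $k$-Bruhat orders, where $\mathbf{k}$ is the Lehmer code of $(v^{\uparrow})^{-1}$. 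This establishes the second assertion that the coefficients can be computed combinatorially.

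There is no serious obstacle here; the work has been frontloaded into \Cref{pr:to_dominant} (whose proof in turn relied on \Cref{le:well_aligned_preserved} and \Cref{le:translation_equivalent}) and into Sottile's Pieri rule. The only thing worth double-checking in the writeup is that the Lehmer code used in the iterated Pieri expansion is indeed read from $(v^{\uparrow})^{-1}$, matching the convention recorded earlier for dominant permutations, so that \Cref{co:iterated_pieri} applies verbatim. With that check, the proof is a three-line citation chain.
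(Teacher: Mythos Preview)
Your proposal is correct and matches the paper's own proof essentially line for line: invoke \Cref{pr:to_dominant} for translation-equivalence, use the fact that translation-equivalent intervals have equal structure constants, and then apply \Cref{co:iterated_pieri} to the dominant $v^{\uparrow}$. There is nothing to add.
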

\begin{proof}
    By Proposition~\ref{pr:to_dominant}, the intervals $[v,w]$ and $[v^{\uparrow},v^{\uparrow}v^{-1}w]$ are translation-equivalent. The equality $c^w_{u,v}=c^{v^{\uparrow}v^{-1}w}_{u,v^{\uparrow}}$ therefore follows from Fact~\ref{fact:translation_coeff}.
    Since $v^{\uparrow}$ is dominant, these coefficients can in turn be computed combinatorially by Corollary~\ref{co:iterated_pieri}.
\end{proof}
% \begin{proof}
% The first half of the claim is a consequence of Proposition~\ref{pr:to_dominant} which states that $[v,w]$ and $[v^{\uparrow},v^{\uparrow}v^{-1}w]$ are translation equivalent. 
% The second half follows from the fact that the classes of must agree.
% Finally, the combinatorial computation of the coefficients $c^{v^{\uparrow}v^{-1}w}_{u,v^{\uparrow}}$ follows from Corollary~\ref{co:iterated_pieri} since $v^{\uparrow}$ is dominant.
% \end{proof}

\subsection{Coefficients via Bergeron--Sottile maps}
We now proceed to study the well-aligned Schubert structure coefficients $c^w_{u,v}$ in a different way, using the Bergeron--Sottile maps. 
Say that a map $f:\mathbb{Z}[\xl]\to \mathbb{Z}[\xl]$ is \emph{Schubert positive} if $$f(\schub{u})=\sum a^v_u\schub{v}\text{ with }a^v_u\ge 0$$
where $u,v\in S_{\infty}$.
We will say that $f$ is \emph{combinatorially Schubert positive} if we can combinatorially determine $a^v_u$ to be nonnegative. The functional
$\Phi^w_v:\mathbb{Z}[\xl]\to \mathbb{Z}$
given by
$$\Phi^w_v(f)=\ct \partial_w(f\schub{v})$$
is Schubert positive because $\Phi^w_v(\schub{u})=c^w_{u,v}$. To show the combinatorial nonnegativity of $c^w_{u,v}$ for well-aligned pairs $(v,w)\in \wellaligned{n}$ is tantamount to showing that $\Phi^w_v$ is in fact combinatorially Schubert positive.

We will show the combinatorial Schubert positivity of $\Phi^w_v$ by showing it is a composite of combinatorially Schubert positive operations. The operations $\ct,\partial_i$ are manifestly combinatorially Schubert positive operations; we will need one further combinatorially Schubert positive operation.
\begin{defn}
  For $f\in \ZZ[\xl]$ and a positive integer $i$, we define the \emph{$i$th Bergeron--Sottile operator} by 
  \[
    \rope{i}f=f(x_1,\ldots,x_{i-1},0,x_i,x_{i+1},\ldots)
  \] 
  That is, $\rope{i}$ sets $x_i=0$ and relabels $x_j\mapsto x_{j-1}$ for $j>i$; so $\rope{i}$ depends only on $i$ (and not on any ambient number of variables) as an operator on $\ZZ[\xl]$.
\end{defn}

As shown in Bergeron--Sottile \cite{BS98}, the result of applying $\rope{i}$ to Schubert polynomials can be computed combinatorially using Sottile's Pieri rule -- in particular, $\rope{i}$ is a combinatorially nonnegative operation. 
We verify this here quickly.
\begin{fact}
We have $\rope{1}\schub{w}=\schub{\delta(w)}$ if $w(1)=1$, and $0$ otherwise. Furthermore
$$\rope{i}f=\rope{1}\partial_1\cdots \partial_{i-1} x_1\cdots x_{i-1}f.$$
\end{fact}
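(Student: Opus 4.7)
The plan is to prove the two assertions separately. For the identity $\rope{1}\schub{w} = \delta_{w(1)=1}\schub{\delta(w)}$, I would appeal to the pipe dream (or equivalently BJS) expansion $\schub{w} = \sum_D \prod_{(i,j)\in D} x_i$, where $D$ ranges over reduced pipe dreams for $w$. The operation $\rope{1}$ first sets $x_1 = 0$, killing all monomials involving $x_1$, i.e., all pipe dreams with a crossing in row $1$; it then relabels $x_j \mapsto x_{j-1}$ for $j \ge 2$. Pipe dreams for $w$ with no crossing in row $1$ have row $1$ entirely composed of elbows, forcing pipe $1$ to turn up at cell $(1,1)$ and exit column $1$; such pipe dreams exist only if $w(1) = 1$. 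In the case $w(1) \ne 1$ we conclude $\rope{1}\schub{w} = 0$, while in the case $w(1) = 1$ the surviving pipe dreams are precisely the down-right shifts of pipe dreams for $\delta(w)$, so after relabeling the variables the expansion becomes $\schub{\delta(w)}$.

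For the second assertion I would proceed by induction on $i$, the base case $i = 1$ being tautological. The crux is the auxiliary identity
\[
\rope{i+1} f \;=\; \rope{i}\bigl(\partial_i(x_i f)\bigr).
\]
To establish this I would compute both sides and verify they agree with $\rope{i}(f) + x_i\,\rope{i}(\partial_i f)$. On one hand, a direct substitution check shows $\rope{i+1}f = \rope{i}(s_i f)$ (inserting the $0$ at position $i+1$ is the same as first swapping $x_i$ and $x_{i+1}$ in $f$ and then inserting at position $i$); combining with $s_i f = f - (x_i - x_{i+1})\partial_i f$ and the substitution values $\rope{i}(x_i) = 0$, $\rope{i}(x_{i+1}) = x_i$ yields $\rope{i+1}f = \rope{i}(f) + x_i\,\rope{i}(\partial_i f)$. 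On the other hand, the Leibniz-type identity $\partial_i(x_i f) = f + x_{i+1}\partial_i f$ followed by $\rope{i}$ produces exactly the same expression.

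Given the auxiliary identity, the inductive step unwinds quickly: applying the inductive hypothesis to $g = \partial_i(x_i f)$ gives
\[
\rope{i+1}f \;=\; \rope{1}\partial_1\cdots\partial_{i-1}\bigl(x_1\cdots x_{i-1}\cdot \partial_i(x_i f)\bigr),
\]
and using that $x_1\cdots x_{i-1}$ is $s_i$-invariant (hence commutes with $\partial_i$) consolidates this as $\rope{1}\partial_1\cdots\partial_i(x_1\cdots x_i f)$, closing the induction.

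The main obstacle I anticipate is isolating the bridge identity $\rope{i+1}f = \rope{i}(\partial_i(x_i f))$; once that is in hand, the induction is essentially bookkeeping relying only on the $s_i$-invariance of $x_1\cdots x_{i-1}$. The case $w(1) \ne 1$ of the first statement also needs some combinatorial input (the pipe dream $x_1$-divisibility argument above) to certify that $\schub{w}$ is divisible by $x_1$; an equivalent proof can be made via a Monk-style recursion on $\schub{w}$, but the pipe dream route keeps the verification entirely self-contained.
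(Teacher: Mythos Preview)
Your argument is correct. For the second identity your approach coincides with the paper's: the key step in both is the bridge identity $\rope{j+1}=\rope{j}\partial_j x_j$, and your induction is the same as the paper's telescoping after the paper first rewrites $\rope{1}\partial_1\cdots\partial_{i-1}x_1\cdots x_{i-1}=\rope{1}(\partial_1 x_1)(\partial_2 x_2)\cdots(\partial_{i-1}x_{i-1})$ using exactly the commutation you invoke (that $x_1\cdots x_{j-1}$ is $s_j$-invariant). Your derivation of the bridge identity via $\rope{i+1}f=\rope{i}(s_if)$ and the Leibniz rule is more detailed than the paper's, which simply declares it ``easily verified''.

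For the first identity you take a genuinely different route. The paper argues algebraically from the dual characterization of Schubert polynomials: using the commutation $\partial_i\rope{1}=\rope{1}\partial_{i+1}$, one computes $\ct\partial_{w'}\rope{1}\schub{w}=\ct\partial_{\ins_1(w')}\schub{w}=\delta_{\ins_1(w'),w}$ and reads off the result. Your pipe dream argument is instead combinatorial: setting $x_1=0$ kills any pipe dream with a cross in row~$1$, which forces $w(1)=1$, and the surviving pipe dreams are exactly the row-shifts of pipe dreams for $\delta(w)$. Both are short; the paper's version avoids invoking a combinatorial model and stays within the divided-difference formalism already set up, while yours has the advantage of being self-contained and making the vanishing for $w(1)\ne 1$ visibly manifest.
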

\begin{proof}
The statement for $\rope{1}$ is well-known -- one way to see it is that  because $\partial_i\rope{1}=\rope{1}\partial_{i+1}$ for all $i$, we have $\ct\partial_{w'}\rope{1}\schub{w}=\ct\partial_{\ins_1(w')}\schub{w}$, which equals $1$ if $\ins_1(w')=w$ and $0$ otherwise. In other words, we have that $\ct\partial_{w'}\rope{1}\schub{w}$ equals $1$ if and only if $w(1)=1$ and $\delta(w)=w'.$
 For the more general statement, because $\partial_j$ commutes with polynomials symmetric in $x_j,x_{j+1}$ we may rewrite
$$\rope{1}\partial_1\cdots \partial_{i-1}x_1\cdots x_{i-1}f=\rope{1}\partial_1x_1\partial_2x_2\cdots \partial_{i-1}x_{i-1}f$$
and then repeatedly apply the fact $\rope{j}\partial_jx_j=\rope{j+1}$.
\end{proof}

\begin{lem}
\label{lem:Phisimplify}
  Consider $(v,w)\in \wellaligned{n}$. Suppose $i=v^{-1}(1)$ and $j=w^{-1}(1)$. 
  Then
  \[
    \Phi^w_v=\Phi^{\delta(w)}_{\delta(v)}\rope{i}\partial_{i}\partial_{i+1} \cdots \partial_{j-1}.
  \]
\end{lem}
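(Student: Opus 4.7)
My plan is to combine two reductions. Set $w^{*} := w s_{j-1} s_{j-2} \cdots s_{i}$. Each right-multiplication reduces length (at each step the value $1$ is to the right of the entries being swapped), so $\ell(w^{*}) = \ell(w) - (j-i)$ and we obtain the reduced factorization $\partial_w = \partial_{w^*} \partial_i \partial_{i+1} \cdots \partial_{j-1}$; moreover $w^*(i) = 1$ and $\delta(w^*) = \delta(w)$. The aligned condition $v(i) < v(i+1) < \cdots < v(j)$ makes $\schub{v}$ symmetric in $x_i, x_{i+1}, \ldots, x_j$, so $\partial_k \schub{v} = 0$ for each $k \in \{i,\ldots, j-1\}$ and the twisted Leibniz rule collapses to $\partial_k(h \schub{v}) = (\partial_k h)\schub{v}$ for such $k$. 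Iterating pulls all of $\partial_i \cdots \partial_{j-1}$ cleanly through $\schub{v}$, giving
\[
\Phi^w_v(f) = \Phi^{w^*}_v\bigl(\partial_i \partial_{i+1} \cdots \partial_{j-1} f\bigr).
\]

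It remains to prove the auxiliary claim: whenever $v', w' \in S_n$ both satisfy $v'(k) = w'(k) = 1$, one has $\Phi^{w'}_{v'} = \Phi^{\delta(w')}_{\delta(v')} \rope{k}$. Applied to $(v',w',k) = (v, w^*, i)$ together with $\delta(w^*) = \delta(w)$, this finishes the lemma. I prove the claim by induction on $k$. For $k = 1$: $w'$ fixes the element $1$, so $w' \in \langle s_2,\ldots,s_{n-1}\rangle$ and $\partial_{w'}$ uses only $\partial_\ell$ with $\ell \ge 2$; each such $\partial_\ell$ commutes with $\rope{1}$ up to the index shift $\ell \mapsto \ell-1$, yielding $\rope{1} \circ \partial_{w'} = \partial_{\delta(w')} \circ \rope{1}$. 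Combined with the Bergeron--Sottile identity $\rope{1} \schub{v'} = \schub{\delta(v')}$ (the $k=1$ case of the Fact quoted in the excerpt) and the tautology $\ct = \ct \circ \rope{1}$, this produces
\[
\ct \partial_{w'}(g \schub{v'}) = \ct \partial_{\delta(w')}\bigl(\rope{1}(g) \cdot \schub{\delta(v')}\bigr),
\]
which is exactly the base case.

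For the inductive step $k \ge 2$: we have $v'(k-1), w'(k-1) > 1$, so $k-1 \in \des{v'} \cap \des{w'}$ and $\partial_{w'} = \partial_{w' s_{k-1}} \partial_{k-1}$. Applying Leibniz to $\partial_{k-1}(g \schub{v'}) = (\partial_{k-1}g)\schub{v'} + (s_{k-1}g)\schub{v's_{k-1}}$ gives
\[
\Phi^{w'}_{v'}(g) = \Phi^{w' s_{k-1}}_{v'}(\partial_{k-1} g) + \Phi^{w' s_{k-1}}_{v' s_{k-1}}(s_{k-1} g).
\]
The first term vanishes: the first $k-1$ entries of $v'$ are all $\ge 2$ (since $1$ is pinned at position $k$), while the first $k-1$ entries of $w' s_{k-1}$ contain $1$ at position $k-1$, so the tableau criterion yields $v' \not\le_B w' s_{k-1}$, and by the standard vanishing of $c^w_{u,v}$ outside the Bruhat interval, $\Phi^{w' s_{k-1}}_{v'} \equiv 0$. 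In the second term, both $v' s_{k-1}$ and $w' s_{k-1}$ pin $1$ at position $k-1$, and a direct check shows $\delta(v' s_{k-1}) = \delta(v')$ and $\delta(w' s_{k-1}) = \delta(w')$; the inductive hypothesis reduces this term to $\Phi^{\delta(w')}_{\delta(v')}(\rope{k-1}(s_{k-1} g))$. Finally the identity $\rope{k-1}(s_{k-1} g) = \rope{k} g$---both evaluate $g$ at the string $(x_1,\ldots,x_{k-2}, x_{k-1}, 0, x_k,\ldots,x_{n-1})$---closes the induction.

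The main obstacle is the inductive step of the auxiliary claim. Its cleanness hinges on two small miracles: the vanishing of the ``misaligned'' Leibniz term via Bruhat incomparability, and the operator identity $\rope{k-1} \circ s_{k-1} = \rope{k}$. Both are ultimately routine unpackings of the definitions, but they are exactly what make the whole reduction go through.
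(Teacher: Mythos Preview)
Your proof is correct. The first reduction---factoring $\partial_w=\partial_{w^*}\partial_i\cdots\partial_{j-1}$ and pulling the $\partial_k$'s through $\schub{v}$ using that $i,\ldots,j-1$ are ascents of $v$---is exactly the descent-cycling step the paper does, just executed in one batch rather than one $s_{j-1}$ at a time. The genuine difference lies in the case $i=j$: the paper dispatches it by citing \cite[Proposition 8.1]{nst_c}, whereas you prove the auxiliary identity $\Phi^{w'}_{v'}=\Phi^{\delta(w')}_{\delta(v')}\rope{k}$ (for $v'(k)=w'(k)=1$) from scratch by induction on $k$. Your inductive step---applying Leibniz to $\partial_{k-1}(g\,\schub{v'})$, killing the cross-term $\Phi^{w's_{k-1}}_{v'}$ via the Bruhat incomparability $v'\not\le_B w's_{k-1}$, and closing with the substitution identity $\rope{k-1}\circ s_{k-1}=\rope{k}$---is clean and elementary. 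The paper's version is shorter on the page but imports an external result; yours makes the lemma self-contained.
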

\begin{proof}
  If $i=j$, then $\Phi^w_v=\Phi^{\delta(w)}_{\delta(v)}\rope{i}$ is the statement of \cite[Proposition 8.1]{nst_c}.
  Otherwise because $(v,w)$ is well-aligned we have $j-1\in \des{w}$ and $j-1\not\in \des{v}$ so $$\Phi^w_vf=\ct\partial_w\schub{v}f=\ct\partial_{ws_{j-1}}\partial_{j-1}\schub{v}f=\ct\partial_{ws_{j-1}}\schub{v}(\partial_{j-1} f)=\Phi^{ws_{j-1}}_v\partial_{j-1}f,$$ where the second last equality follows because $\schub{v}$ is symmetric in the variables $x_{j-1},x_{j}$ (this idea is a special case of Knutson's descent-cycling \cite{Knu01}).
  Since $(v,ws_{j-1})\in \wellaligned{n}$, we can apply this repeatedly until the positions of $1$ align and then apply the case $i=j$.
\end{proof}

Note that since $(\delta(v),\delta(w))\in \wellaligned{n-1}$ if $(v,w)\in \wellaligned{n}$, iterating the previous lemma, we obtain a formula for $\ct\partial_w(\schub{v}\schub{u})$ as a composite of $\rope{i}$s and $\partial_j$s applied to $\schub{u}$. We make this explicit in the case where $(v,w)\in \wellaligned{n}^{132}$.
In this case, this composite can be read off from the skew Rothe diagram $\rothe(w)\setminus \rothe(v)$. 
\begin{prop}
  Let $(v,w)\in \wellaligned{n}^{132}$ and let $D\coloneqq \rothe(w)\setminus \rothe(v)$.
  For $1\leq i\leq n$, let $r_i$ be the number of boxes in the $i$th row of $D$, considered from top to bottom.
  Suppose further that $(c_1,\dots,c_n)=\lcode{v^{-1}}$.
  Then we have
  \[
    \Phi^w_v=\ev_0 \rope{c_n+1}\partial_{c_n+1}\cdots \partial_{c_n+r_n}
    \rope{c_{n-1}+1}\partial_{c_{n-1}+1}\cdots \partial_{c_{n-1}+r_{n-1}}\cdots
    \rope{c_1+1}\partial_{c_1+1}\cdots \partial_{c_1+r_1}.
  \]
\end{prop}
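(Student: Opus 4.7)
The plan is to induct on $n$, peeling off one application of Lemma~\ref{lem:Phisimplify} at a time and matching it to the rightmost block $\rope{c_1+1}\partial_{c_1+1}\cdots\partial_{c_1+r_1}$ of the claimed formula. That lemma gives $\Phi^w_v=\Phi^{\delta w}_{\delta v}\rope{i}\partial_i\partial_{i+1}\cdots\partial_{j-1}$ with $i=v^{-1}(1)$ and $j=w^{-1}(1)$, so closing the induction requires three identifications: (i) $i=c_1+1$ and $j-1=c_1+r_1$; (ii) $\lcode{(\delta v)^{-1}}=(c_2,\ldots,c_n)$; and (iii) the row lengths of $\rothe(\delta w)\setminus\rothe(\delta v)$ are $(r_2,\ldots,r_n)$. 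Together with the fact that $(\delta v,\delta w)\in\wellaligned{n-1}^{132}$ (well-alignedness is preserved under $\delta$ by definition, and (ii) confirms dominance of $\delta v$), these let the inductive hypothesis supply the remaining operators; the base case $n=1$ is vacuous.

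Identity (i) is immediate: since $v$ is dominant, $\rothe(v)$ is the Young diagram with row lengths $(c_1,\ldots,c_n)$, so $v^{-1}(1)=c_1+1$ by reading off the first row. Moreover, for any $\sigma\in S_n$ the first row of $\rothe(\sigma)$ consists precisely of the cells $(1,k)$ with $k<\sigma^{-1}(1)$, so row $1$ of $D$ has length $(w^{-1}(1)-1)-(v^{-1}(1)-1)=j-i$, giving $r_1=j-i$. The substantive work is in (ii) and (iii), and both will follow from a clean structural description of $\delta$ on Rothe diagrams: for any $\sigma\in S_n$ with $p=\sigma^{-1}(1)$, column $p$ of $\rothe(\sigma)$ is empty (a cell $(a,p)\in\rothe(\sigma)$ would require $\sigma(p)=1>\sigma(k)$ for some $k>p$), and
\[
\rothe(\delta\sigma)=\{(a-1,b)\suchthat(a,b)\in\rothe(\sigma),\ a>1,\ b<p\}\cup\{(a-1,b-1)\suchthat(a,b)\in\rothe(\sigma),\ a>1,\ b>p\}.
\]

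Applying this description to both $v$ and $w$ yields (iii): because $v$ is dominant, $\rothe(v)$ is supported in columns $\leq c_1<v^{-1}(1)\leq w^{-1}(1)$, so the column-shift acts trivially on $\rothe(v)$ and on the copy of $\rothe(v)$ sitting inside $\rothe(w)$. The induced map on the remaining cells is therefore a bijection between row $k+1$ of $\rothe(w)\setminus\rothe(v)$ and row $k$ of $\rothe(\delta w)\setminus\rothe(\delta v)$, proving that the length of row $k$ of the latter equals $r_{k+1}$. Identity (ii) is a routine computation from the one-line description $(\delta\sigma)^{-1}(k)=\sigma^{-1}(k+1)-[\sigma^{-1}(k+1)>\sigma^{-1}(1)]$; case-checking on which side of $\sigma^{-1}(1)$ the positions $\sigma^{-1}(k+1)$ and $\sigma^{-1}(j)$ lie (for $j>k+1$) collapses every case to $(\delta\sigma)^{-1}(j-1)<(\delta\sigma)^{-1}(k)\iff\sigma^{-1}(j)<\sigma^{-1}(k+1)$, whence $\lcode{(\delta\sigma)^{-1}}_k=c_{k+1}$. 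The main obstacle is setting up the structural description of $\rothe(\delta\sigma)$ cleanly; once the emptiness of column $p$ is in hand, the remainder is careful bookkeeping about row and column indices.
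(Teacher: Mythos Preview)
Your proof is correct and follows essentially the same approach as the paper: both argue by iterating Lemma~\ref{lem:Phisimplify} and tracking how the Rothe diagrams of $v$ and $w$ transform under $\delta$. The paper's proof is a one-sentence sketch (``applying $\delta$ to each of $v$ and $w$ corresponds to striking out the top row of the bounding $n\times n$ box and then the column containing the $1$''), and your argument is precisely the detailed unpacking of that sentence, carefully verifying the identifications (i)--(iii) needed to close the induction.
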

\begin{proof}
      Iterating Lemma~\ref{lem:Phisimplify} expresses $\Phi^w_v$ as the claimed composite: applying $\delta$ to each of $v$ and $w$ corresponds to striking out the top row of the bounding $n\times n$ box and then the column containing the $1$ in $w$. 
\end{proof}
% \begin{proof}
%     This is a consequence of the fact that applying $\delta$ to each of $v$ and $w$ corresponds to striking out the top row of the bounding $n\times n$ box and then the column containing the $1$ in $v$. %We omit details.
% \end{proof}

\begin{eg}
  Referring back to the well-aligned pair
  $(v,w)=(4521367,5724316)$ depicted on the left in Figure~\ref{fig:rothe_inclusion_dominant}, we have 
  \begin{align*}
  (r_1,\dots,r_7)=(2,0,1,2,0,1,0)\quad \text{and} \quad 
  (c_1,\dots,c_7)=(3,2,2,0,0,0,0)=\lcode{4351267}.
  \end{align*}
  Thus we have
  \[
    \Phi^w_v=\ev_0 \rope{1}\rope{1}\partial_{1}\rope{1}\rope{1}\partial_{1}\partial_{2}\rope{3}\partial_{3}\rope{3}\rope{4}\partial_{4}\partial_{5}.
  \]
  In particular, the boundary of the grey region corresponding to $v$ determines the subscripts of the $\rope{i}$s and the skew Rothe diagram $D$ (given by the blue shaded cells) determines the subscripts of the $\partial_j$s.
\end{eg}

%%%%%%%%%%%%%%%%%%%%%%%%%%%%%%%%%%%%%%%%%%%%%%%%%%%%%
\section{Richardson tableaux and well-aligned pairs}
\label{sec:richtab_to_well_aligned}
%%%%%%%%%%%%%%%%%%%%%%%%%%%%%%%%%%%%%%%%%%%%%%%%%%%%%

The main subfamily of well-aligned pairs in this article comes from a subfamily of SYTs comprising Richardson tableaux.
Given a partition $\lambda$, we let \emph{$\crop(\lambda)$} denote the partition obtained by omitting the first row of $\lambda$.
For $T\in \syt(\lambda)$, we let \emph{$\crop(T)$} denote the SYT of shape $\crop(\lambda)$ obtained from the subtableau of $T$ determined from rows 2 and below, where we naturally standardize said filling so that the entries are precisely 
$\{1,\dots, |\crop(\lambda)|\}$. 

\begin{defn}\label{def:richtab}
We say that an SYT $T$ is a \emph{Richardson tableau} if the following conditions hold:
  \begin{enumerate}[label=(\arabic*)]
    \item \label{it1:richtab_condition_1}for every entry $j$ in the second row $j-1$ is in the first row, and
    \item \label{it2:richtab_condition_2} $\crop(T)$ is a Richardson tableau,
  \end{enumerate}
with the convention that the empty tableau is a Richardson tableau as a base case.
Given a partition $\lambda$ we let $\richtab(\lambda)$ denote the set of Richardson tableaux of shape $\lambda$.
\end{defn}
The preceding definition is not the original definition of Richardson tableau by Karp and Precup \cite[Definition 1.3]{KaPr25}; that the aforementioned definition is an alternative characterization is shown by the same authors  \cite[Corollary 4.18]{KaPr25}.

\begin{eg}\label{ex:cropping}
  Shown below are repeated croppings of the tableau on the left. Each tableau satisfies condition~\ref{it1:richtab_condition_1} in Definition~\ref{def:richtab}.
  In particular the leftmost tableau is an element of $\richtab((5,3,2,2))$.
  \begin{align*}
      \begin{ytableau}
        1 & 2 & 5 & 7 & 10\\
        3 & 8 & 11\\
        4 & 9\\
        6 & 12
      \end{ytableau}\qquad
      \begin{ytableau}
        1 & 4 & 6\\
        2 & 5\\
        3 & 7
      \end{ytableau}\qquad
      \begin{ytableau}
        1 & 3\\
        2 & 4
      \end{ytableau}\qquad
      \begin{ytableau}
        1 & 2
      \end{ytableau}\qquad \varnothing
  \end{align*}
\end{eg}

We record two facts about Richardson tableaux that we need later.
\begin{thm}[{\cite{KaPr25}}]
  The following hold.
  \begin{enumerate}
    \item Richardson tableaux are characterized by the fact that each evacuation slide is an \emph{$L$-slide} \cite[Theorem 5.3]{KaPr25}. That is, as one computes the evacuation tableau for $T$ a Richardson tableau, the path traced by the ``hole'' during every jeu-de-taquin slide is $L$-shaped.
    \item A tableau $T$ is Richardson if and only if $\evac{T}$ is Richardson \cite[Corollary 3.23]{KaPr25}.
  \end{enumerate}
  \label{th:richardson_evacuation}
\end{thm}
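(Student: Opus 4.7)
The plan is to prove both parts simultaneously by induction on $|T|$, with trivial base case.

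For Part (1), the induction step hinges on analyzing the first evacuation slide: after deleting the $1$ at $(1,1)$ and decrementing, the JDT rectification traces a path of the ``hole'' through the tableau by repeatedly comparing the entries immediately to its right and immediately below. The first claim is that this slide is L-shaped if and only if condition (1) of Definition~\ref{def:richtab} holds. For the forward direction: if every $j$ in row $2$ has $j-1$ in row $1$, then the tight coupling between rows $1$ and $2$ controls the relevant comparisons and forces monotonicity---once the hole commits to moving down a column, it cannot later turn right. The converse is by contrapositive, using a minimal witness $j$ in row $2$ with $j-1$ in row $\geq 3$ to exhibit a bend in the hole's path.

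Having established the first-slide claim, one must then reduce to a smaller evacuation problem. The cleanest way is to reformulate Richardson-ness in terms of the growth sequence $\lambda^0 \subset \lambda^1 \subset \cdots \subset \lambda^n$ of $T$, where $\lambda^i$ records the cells with entries $\leq i$: condition (1) says that each step $\lambda^{i-1} \to \lambda^i$ adding a row-$2$ cell is immediately preceded by a step adding a row-$1$ cell, and condition (2) propagates this recursively to lower rows. Because the evacuation slides free corners in reverse along this growth, the L-slide structure of the remaining slides can be tracked via the growth of the unfrozen portion, and the inductive hypothesis closes the step.

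For Part (2), since $\evac$ is an involution, it suffices to show $T$ Richardson implies $\evac(T)$ Richardson. By Part (1), this reduces to showing every evacuation slide of $\evac(T)$ is an L-slide. The growth sequence of $\evac(T)$ is precisely the reverse of the sequence of corners freed during the evacuation of $T$; since Part (1) gives explicit control over these freed corners via the L-slide structure, one can translate the Richardson condition across this reversal and verify it directly on $\evac(T)$.

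The main obstacle is the bookkeeping in Part (1): rigorously reducing to a smaller evacuation problem requires tracking how the unfrozen portion of $T$ after the first slide, combined with condition (2), matches evacuation behavior on a tableau smaller than $T$---standardization and shape changes must be handled carefully. For Part (2), the subtler challenge is making the growth-sequence symmetry under $\evac$ sufficiently explicit to check that L-slides in $T$'s evacuation correspond to L-slides in $\evac(T)$'s evacuation.
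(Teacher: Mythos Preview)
The paper does not prove this theorem: it is quoted from Karp--Precup \cite{KaPr25}, with explicit pointers to \cite[Theorem~5.3]{KaPr25} and \cite[Corollary~3.23]{KaPr25}, and is used as a black box thereafter. So there is no ``paper's own proof'' to compare against.

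That said, your proposed argument has genuine gaps. Your central claim for Part~(1) is that the \emph{first} evacuation slide is $L$-shaped if and only if condition~\ref{it1:richtab_condition_1} of Definition~\ref{def:richtab} holds. The ``only if'' direction is false. Take
\[
T=\begin{ytableau}1&2&4\\3&6\\5\end{ytableau}.
\]
Here $6$ lies in row~$2$ but $5$ lies in row~$3$, so condition~\ref{it1:richtab_condition_1} fails; yet after deleting $1$ and decrementing, the hole at $(1,1)$ sees $1$ to its right and $2$ below, moves right, then sees $3$ to its right and $5$ below, moves right again, and terminates---a (degenerate) $L$-slide. More simply, $T=\begin{smallmatrix}1&2\\3&4\end{smallmatrix}$ already exhibits the same phenomenon. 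So the first slide alone cannot detect condition~\ref{it1:richtab_condition_1}; violations of that condition may only manifest in later slides, and your induction does not account for this.

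There is also a likely sign error in your reading of ``$L$-slide'': the shape traced is \emph{down-then-right} (as the letter $L$ suggests and as the slides in Example~\ref{ex:new_evacuation} confirm), whereas your phrasing ``once the hole commits to moving down a column, it cannot later turn right'' describes the opposite orientation. Finally, your contrapositive sketch assumes a minimal violating $j$ in row~$2$ has $j-1$ in row~$\geq 3$, but $j-1$ can perfectly well sit in row~$2$ (again $\begin{smallmatrix}1&2\\3&4\end{smallmatrix}$). The correct inductive mechanism---visible in how the paper \emph{uses} the theorem---is that for Richardson $T$ the unfrozen tableau after one slide is again Richardson, which is a stronger statement than what you attempt to carry through.
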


We make note of some crucial consequences of the first part of Theorem~\ref{th:richardson_evacuation}.
For $T$ a Richardson tableau, the tableau $T'$ obtained after a single evacuation slide, where we ignore the frozen cell, is also Richardson. 
Indeed the $L$-slide characterization immediately implies that all evacuation slides applied to $T'$ are $L$-slides.
Furthermore it is the case that 
\[
  \crop(\evac{T})=\evac{\crop(T)}.
\]

We now associate a pair of permutations to any SYT.  
It will turn out that this pair is well-aligned for Richardson tableaux.

\begin{defn}\label{def:richardsonpairs}
  Given $T\in \syt(\lambda)$, we define permutations $v_T,w_T\in S_n$ as follows:
  \begin{enumerate}
    \item $v_T^{-1}$ is the top down reading word of $\evac{T}$,
    \item $w_{\circ}w_T^{-1}w_{\circ}$ is the reading word of $T$.
  \end{enumerate}
\end{defn}
\begin{eg}\label{ex:vt_wt}
  Consider the $T$ and $\evac{T}$ from Example~\ref{ex:evacuation}. 
  Then
   $$\begin{tabular}{cccccccccccccc}
$v_T=$&1 & 6 & 9 & 2 & 7 & 11& 3 & 8& 4 & 10& 12 & 5\\
$w_T=$&11 & 6 & 1 & 9& 7 & 2 & 12& 3& 10& 8& 4& 5
\end{tabular}$$
\end{eg}

We now proceed to an alternative recursive description of $v_T$ and $w_T$ that will be more convenient for our purposes.
% %%%%%%%%%%%%%%%%%
% \subsection{Column strip decomposition}
% %%%%%%%%%%%%%%%%%
We begin with an elementary lemma about Richardson tableaux that relies on their characterization in terms of $L$-slides.

\begin{lem}
Let $\lambda\vdash n$ and $T\in \richtab(\lambda)$.
Suppose $m$ is the largest number such that the entries $\{1,2,\dots,m\}$ appear in the first column of $T$.
Then the first $m$ evacuation slides terminate in rows $r_1,\dots,r_m$ where $r_1>r_2>\cdots >r_m=1$. Consequently, the entries $\{n-m+1,\dots,n\}$ occupy a column strip in $\evac{T}$ with $n-m+1$ appearing at the end of the first row.
\end{lem}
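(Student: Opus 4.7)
The plan is to prove the lemma by induction on $m$. For the base case $m = 1$, the value $2$ is forced to sit at $(1, 2)$ rather than $(2, 1)$, so after removing $1$ and decrementing, the L-slide from $(1, 1)$ travels rightward along row $1$ all the way to $(1, \lambda_1)$, giving $r_1 = 1$ and the required placement.

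For the inductive step $m \geq 2$, I would first establish $r_1 \geq m$ from the column-$1$ structure. After decrementing, the entries below the hole in column $1$ are $1, 2, \ldots, m - 1$, while each entry $T(k, 2) - 1$ with $k \leq m - 1$ strictly exceeds $k$, because $k + 1$ is pinned in column $1$ and hence $T(k, 2) \geq k + 2$. Thus the L-path descends through rows $1, \ldots, m - 1$ before any rightward motion. Appealing to the consequence noted after Theorem~\ref{th:richardson_evacuation}, the unfrozen residual tableau $T'$ after slide $1$ is itself Richardson. A short case analysis on $r_1 = m$ versus $r_1 > m$ shows that the first column of $T'$ begins with exactly $1, 2, \ldots, m - 1$ and that the row-$m$ entry of column $1$ in $T'$ strictly exceeds $m$. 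The inductive hypothesis applied to $T'$, which has parameter $m - 1$, yields $r_2 > r_3 > \cdots > r_m = 1$, the column-strip property for the values $n - m + 1, \ldots, n - 1$, and the placement of $n - m + 1$ at $(1, \lambda_1)$; the last uses $r_1 \geq m \geq 2$, so slide $1$ does not alter row $1$.

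The remaining step, which I expect to be the main obstacle, is to prove $r_1 > r_2$. I would branch on $c_1$. If $c_1 = 1$, then the frozen corner $(r_1, 1)$ forces $\lambda_{r_1} = 1$ and $\lambda_{r_1 + 1} = 0$, so row $r_1$ is entirely absent from the shape of $T'$, and the slide-$2$ L-path can descend in column $1$ no further than row $r_1 - 1$ before having to turn right. If $c_1 \geq 2$ the frozen cell lies outside column $1$ and one must argue more carefully: slide $1$'s rightward move at $(r_1, 1)$ means that in $T'$ the entry at $(r_1, 1)$ equals $T(r_1, 2) - 1$, while $(r_1 - 1, 2)$ lies off the slide-$1$ path and retains $T(r_1 - 1, 2) - 1$. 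Column-increasing of $T$ then forces the value below to strictly exceed the value to the right when the slide-$2$ hole arrives at $(r_1 - 1, 1)$, so the path must turn right by that row. Either way $r_2 \leq r_1 - 1 < r_1$, which together with the inductive conclusion yields $r_1 > r_2 > \cdots > r_m = 1$. The column-strip claim and the placement of $n - m + 1$ at the end of row $1$ then follow immediately.
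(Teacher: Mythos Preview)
Your proof is correct and follows essentially the same approach as the paper's: both induct on $m$, both reduce to the key inequality $r_1 > r_2$, and both prove that inequality via the same comparison of $T(r_1,2)$ against $T(r_1-1,2)$, split into the same two cases (your $c_1=1$ versus $c_1\ge 2$ is exactly the paper's $\lambda_{r_1}=1$ versus $\lambda_{r_1}\ge 2$). Your write-up is more explicit than the paper's---you spell out $r_1\ge m$ and the first-column structure of $T'$ needed for the inductive hypothesis, which the paper leaves to ``induction does the rest''---but the underlying argument is the same.
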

\begin{proof}
  We shall repeatedly use the $L$-slide characterization of Richardson tableaux without explicit mention.
  The case $m=1$ is trivial.
  Suppose $m>1$.
  If $\lambda=(1^n)$ then the claim is clear. 
  So we assume that $\lambda_1>1$. By our condition on $m$ we know that $m+1$ appears in the first row next to $1$.
  In particular the $m$th evacuation slide must terminate at the end of the first row. 
  So $r_m=1$.
  %In particular the $m$th evacuation slide, which is necessarily an $L$-slide, must terminate in the first row. So $r_m=1$.

  To establish the claim we only need to show that $r_1>r_2$, as induction does the rest.
  If the $r_1$th row is of length $1$, again the claim is clear. So we assume that the $r_1$th row has length at least $2$.
  Let $j$ be the entry in the $r_1$th row and the second column. 
  Since $T$ is standard we know that $j$ is strictly larger than the entry immediately above it; call it $k$.
  After the first evacuation slide, $j$ moves to the first column whilst remaining in row $r_1$, whereas $k$ does not move. 
  This configuration guarantees that the next evacuation slide  terminates in a row strictly above $r_1$. This concludes the proof.
\end{proof}

\begin{eg}\label{ex:new_evacuation}
  Shown below is a Richardson tableau $T$ (left) and the result of applying the first three evacuation slides to it. The shaded entries belong to the evacuation tableau and are frozen. 
  In this case $m=3$, $r_1=4$, $r_2=2$, and $r_3=1$. 
  The entries $\{10,11,12\}$ occupy a column strip in $\evac{T}$ with $10$ appearing at the end of the first row. 
  \[
      \begin{ytableau}
        1 & 4 & 7 & 9 & 12\\
        2 & 5 & 8\\
        3 & 10\\
        6 & 11
      \end{ytableau}
      \qquad
      \begin{ytableau}
        1 & 3 & 6 & 8 & 11\\
        2 & 4 & 7\\
        5 & 9\\
        10 & *(red!50)12
      \end{ytableau}
      \qquad
      \begin{ytableau}
        1 & 2 & 5 & 7 & 10\\
        3 & 6 & *(red!50)11\\
        4 & 8\\
        9 & *(red!50)12
      \end{ytableau}
      \qquad
      \begin{ytableau}
        1 & 4 & 6 & 9 & *(red!50)10\\
        2 & 5 & *(red!50)11\\
        3 & 7\\
        8 & *(red!50)12
      \end{ytableau}
  \]
 
\end{eg}

Informally, the preceding lemma states that a column of entries $\{1,\dots,m\}$ on the top left in a Richardson tableau $T$ creates a distinguished column strip in the Richardson tableau $\evac{T}$.
To employ this observation recursively, we first show that there is a natural \emph{column strip decomposition} of any Richardson tableau completely determined by the entries in its first row.

\begin{defn}
For $T\in \syt(\lambda)$, we let $\First(T)=\{1=a_1<\dots<a_{\lambda_1}\}$ denote the entries in the first row of $T$ read from left to right. 
We further declare that $a_{\lambda_1+1}=|\lambda|+1$.
\end{defn}

\begin{lem}\label{le:column_strips_first_row}
  Let $T\in \richtab(\lambda)$.
  Let $\First(T)=\{1=a_1<\cdots<a_{\lambda_1}\}$.
   For $1\leq i\leq \lambda_1$, the cells occupied by the entries in $\{a_i,\dots,a_{i+1}-1\}$ form a column strip in $T$.
\end{lem}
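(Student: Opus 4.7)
My plan is to induct on the number of rows $\ell(\lambda)$. The base case $\ell(\lambda) = 1$ is trivial since every range $\{a_i, \ldots, a_{i+1}-1\}$ is a singleton.

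For the inductive step, I would let $T' = \crop(T)$, which is Richardson by Definition~\ref{def:richtab}, and write $\First(T') = \{1 = a'_1 < \cdots < a'_{\lambda_2}\}$ with the sentinel $a'_{\lambda_2+1} := n - \lambda_1 + 1$. The strategy is to use the standardization bijection $\sigma \colon [n] \setminus \First(T) \to [n - \lambda_1]$, which sends a cell of $T$ in row $r \geq 2$, column $c$ carrying entry $x$ to the cell of $T'$ in row $r-1$, column $c$ carrying entry $\sigma(x)$. First I would observe that the entries $\{a_i+1, \ldots, a_{i+1}-1\}$ all lie in rows $\geq 2$ of $T$ (they are absent from $\First(T)$), and that $\sigma$ carries this set to the consecutive range $\{a_i-i+1, \ldots, a_{i+1}-i-1\}$ in $T'$.

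The crux is to verify that this consecutive range is contained in a single range $\{a'_k, \ldots, a'_{k+1}-1\}$ of $T'$. Once established, the inductive hypothesis gives a column strip for those cells in $T'$, which then lifts back (by adding 1 to each row index) to a column strip in rows $\geq 2$ of $T$; appending the cell $(1,i)$ containing $a_i$ yields the desired column strip for $\{a_i, \ldots, a_{i+1}-1\}$.

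The main obstacle is proving the containment, which I would handle by a case analysis. Richardson condition (1) of Definition~\ref{def:richtab} forces row 2 of $T$ to consist of entries $\{a_{j_s}+1 : 1 \leq s \leq \lambda_2\}$ for some indices $j_1 < \cdots < j_{\lambda_2}$, and a direct computation gives $a'_s = a_{j_s}+1-j_s$. Since $a_t - t$ is nondecreasing in $t$, the case $j_s \leq i$ yields $a'_s \leq a_i - i + 1$; and since $\First(T)$ is strictly increasing, $j_s \geq i+1$ yields $a_{j_s} \geq a_{i+1}+(j_s-i-1)$ and hence $a'_s \geq a_{i+1}-i$. The sentinel $a'_{\lambda_2+1} = n-\lambda_1+1$ is ruled out via $a_{i+1} \leq n - \lambda_1 + i + 1$, which follows from $a_{i+1}, \ldots, a_{\lambda_1}$ being $\lambda_1 - i$ distinct integers in $[n]$. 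Together, these bounds rule out any $a'_s$ strictly between $a_i-i+1$ and $a_{i+1}-i$, completing the containment and hence the induction.
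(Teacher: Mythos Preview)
Your proof is correct. Both you and the paper induct via $\crop(T)$, but the arguments are organized differently.

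The paper runs a minimal-counterexample argument: take a smallest $T$ for which some interval $X=\{a_i,\ldots,a_{i+1}-1\}$ fails to be a column strip, argue that $X$ must then contain at least two boxes in row~2 (otherwise the failure descends to $\crop(T)$, contradicting minimality), and then derive a contradiction directly from Richardson condition~(1) by looking at the two leftmost row-2 entries $j<j'$ of $X$ and observing that $j'-1$ would be a second row-1 entry in $X$.

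Your argument is a direct induction: you explicitly compute that $\sigma$ carries $\{a_i+1,\ldots,a_{i+1}-1\}$ to $\{a_i-i+1,\ldots,a_{i+1}-i-1\}$ and then, via the formula $a'_s=a_{j_s}+1-j_s$ and the dichotomy $j_s\le i$ versus $j_s\ge i+1$, show that no $a'_s$ falls strictly inside this range, so it sits inside a single $T'$-interval. This is exactly the content the paper suppresses in the one-line claim ``then $\crop(T)$ would also be \ldots\ in $\mathrm{Bad}$''; you make that descent precise. One small point worth stating explicitly in your write-up: the inductive hypothesis gives that the full interval $\{a'_k,\ldots,a'_{k+1}-1\}$ is a column strip, and you are using that any consecutive subrange of it (being a skew shape with no two boxes in the same row) is again a column strip --- this is immediate, but worth a clause. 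The trade-off is that the paper's argument is shorter, while yours gives a reusable explicit correspondence between the column-strip decompositions of $T$ and $\crop(T)$.
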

\begin{proof}
This is a reformulation of \cite[Lemma 4.13(i)]{KaPr25} in the language of column strips.  
We include a short self-contained argument.

Let $\mathrm{Bad}$ denote the set of Richardson tableaux that do not have the desired property. 
Our goal is to show that $\mathrm{Bad} = \emptyset$.
We proceed by contradiction. 
Suppose $T \in \mathrm{Bad}$ is minimal, in the sense that $T$ has the fewest boxes among all elements of $\mathrm{Bad}$.
Let 
$X = \{a_i, \dots, a_{i+1}-1\}$
be the set of entries in $T$ that do not form a column strip, and let $S$ denote the skew shape consisting of these entries. 

We claim that $S$ must contain at least two boxes in the second row. 
Indeed, if this were not the case, then $\crop(T)$ would also be a Richardson tableau lying in $\mathrm{Bad}$, contradicting the minimality of $T$.
Let $j$ be the smallest element of $X$ that lies in the second row. 
Since $T$ is Richardson, we know that $j-1$ must appear in the first row.

Now consider the entry $j'$ immediately to the right of $j$ in $T$. 
We must have $j' > j+1$. 
Indeed, if $j' = j+1$, then by the definition of a Richardson tableau, $j$ would lie in the first row, a contradiction. 
Observe further that $j' \in X$. 
But then $j'-1$ belongs to both the first row and to $X$. 
This is impossible, since $X$ contains exactly one entry from the first row.
\end{proof}
%   Let $\mathrm{Bad}$ be the set of Richardson tableau that do not have the desired property. Our aim is to show that $\mathrm{Bad}$ is empty.
%   We proceed by contradiction.
%   Suppose that $T$ is a minimal element in $\mathrm{Bad}$, i.e. it has the fewest boxes.
% Let $X=\{a_i,\dots,a_{i+1}-1\}$ be the set of entries that do not form a column strip.
%   Let us refer to the skew shape containing these entries by $S$.

%   We claim that $S$ necessarily contains two boxes in the second row. 
%   Indeed, if this was not the case then $\crop(T)$ would be a Richardson tableau that belongs to $\mathrm{Bad}$ as well.

%   Let $j$ be the smallest element of $X$ in the second row.
%   Since $T$ is Richardson, we know that $j-1$ must be in the first row, and therefore that $j=a_i+1$.
%   Consider the entry $j'$ in the box to the immediate right of $j$ in $T$.
%   Then it must be that $j'>j+1$. Indeed, if $j'=j+1$, then the definition of being Richardson would imply that $j$ lies in the first row.
%   Now $j'$ must belong to $X$, but then $j'-1$ belongs to the first row as well as $X$.
%   This is impossible as $X$ contains exactly one entry from the first row.
%\end{proof}

Given $T\in \richtab(\lambda)$, we let \emph{$\Pi(T)$} denote the set partition of $\{1,\dots,|\lambda|\}$ whose blocks are given by the entries in the column strips from Lemma~\ref{le:column_strips_first_row}.
We note that $\Pi(T)$ is completely determined by $\First(T)$.
As a corollary of the preceding two lemmas we have the following.
\begin{cor}
\label{co:pi_evac}
  Fix $\lambda\vdash n$.
  For $T\in \richtab(\lambda)$, the partition $\Pi(\evac{T})$ is obtained by replacing the entries $i$ in each block of $\Pi(T)$ by $n+1-i$.
\end{cor}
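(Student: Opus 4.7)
The plan is to induct on $n = |\lambda|$, with the empty partition as the trivial base case. For the inductive step, I would set $m = a_2 - 1$ and first observe that the entries $\{1, 2, \ldots, m\}$ occupy the first column of $T$: they form a column strip by \Cref{le:column_strips_first_row}, so lie in distinct rows, and since in a standard tableau entries in row $r$ are $\geq r$, these entries must occupy rows $1, 2, \ldots, m$ respectively; the first-column entry $T(r, 1)$ of row $r$ also satisfies $T(r, 1) \geq r$, so the only placement consistent with having value exactly $r$ in row $r$ is at position $(r, 1)$. The preceding lemma then applies, yielding that the first $m$ evacuation slides terminate in rows $r_1 > r_2 > \cdots > r_m = 1$, so the frozen values $\{n - m + 1, \ldots, n\}$ form a column strip in $\evac{T}$ with $n - m + 1 = n + 2 - a_2$ at the rightmost cell of the first row.

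Let $T'$ denote the non-frozen Richardson tableau (Richardson by the observations following \Cref{th:richardson_evacuation}) on $n - m$ boxes obtained after these $m$ slides. The main computation is the identification
\[
\First(T') = \{1,\; a_3 - m,\; a_4 - m,\; \ldots,\; a_{\lambda_1} - m\}.
\]
For this, I would argue that the first $m - 1$ slides each terminate in rows $\geq 2$, so their slide paths do not visit any cell $(1, h)$ with $h \geq 2$; those entries remain in position and decrement by one per slide, becoming $a_h - (m-1)$ after $m - 1$ slides. A parallel induction showing $\{1, \ldots, m - k\}$ sits at the top of the first column after $k$ slides keeps the $(1, 1)$ entry equal to $1$ throughout. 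The $m$-th slide terminates in row 1, and by the $L$-slide characterization of Richardson tableaux it must then be purely horizontal along row 1; it shifts each $(1, h + 1)$ leftward to $(1, h)$ with a final decrement, producing the claimed first row of $T'$.

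Since the remainder of the evacuation of $T$ coincides with the evacuation of $T'$, I obtain $\First(\evac{T}) = \First(\evac{T'}) \cup \{n + 2 - a_2\}$. Applying the inductive hypothesis to $T'$ with its first-row entries $a'_i = a_{i+1} - m$ and total size $n' = n - m$ gives, after substitution,
\[
\First(\evac{T}) = \{1,\; n + 2 - a_{\lambda_1},\; n + 2 - a_{\lambda_1 - 1},\; \ldots,\; n + 2 - a_2\}.
\]
Comparing the intervals between consecutive elements of $\First(T)$ with those of $\First(\evac{T})$ confirms that $\Pi(\evac{T}) = \{\{n + 1 - i : i \in B\} : B \in \Pi(T)\}$, as claimed. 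The principal obstacle will be the explicit tracking of the JDT paths in the second paragraph; once that is done, the rest is bookkeeping using the preceding lemma and the $L$-slide characterization.
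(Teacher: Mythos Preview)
Your proposal is correct and is precisely the natural way to make rigorous what the paper leaves as an unproved corollary of the two preceding lemmas: peel off the first block $\{1,\ldots,m\}$ of $\Pi(T)$ using the column-strip lemma, match it with $\{n-m+1,\ldots,n\}$ in $\evac{T}$ using the $m$-slide lemma, and recurse on the Richardson tableau $T'$. The JDT tracking you flag as the main obstacle is sound; the one point worth making explicit is that knowing the first move of the $(k{+}1)$th slide is downward requires $(2,1)=1$ in $T^{(k)}$, which is most cleanly obtained by reapplying your first-column argument to the Richardson tableau $T^{(k)}$ (once you know its $(1,2)$-entry is $m-k$) rather than by direct slide-tracking, so that your two ``parallel'' claims are really a single simultaneous induction.
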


\begin{eg}
Shown below are the column strip decompositions of $T$ and $\evac{T}$ from Example~\ref{ex:evacuation}.  
    \[
        \begin{ytableau}
        1 & *(red!50)2 & *(blue!50)5 & *(green!50)7 & *(orange!50)10\\
        *(red!50)3 & *(green!50)8 & *(orange!50)11\\
        *(red!50)4 & *(green!50)9\\
       *(blue!50)6 & *(orange!50)12
        \end{ytableau}\qquad\qquad\qquad 
        \begin{ytableau}
        *(orange!50)1 & *(green!50)4 & *(blue!50)7 & *(red!50)9 & 12\\
        *(orange!50)2 & *(green!50)5 & *(blue!50)8\\
        *(orange!50)3 & *(red!50)10\\
        *(green!50)6 & *(red!50)11
      \end{ytableau}
    \]
    For completeness, we record both $\Pi(T)$ and $\Pi(\evac{T})$ below.
    \begin{align*}
      \Pi(T)&=\{1\}\sqcup \{2,3,4\}\sqcup\{5,6\}\sqcup \{7,8,9\}\sqcup\{10,11,12\},\\
      \Pi(\evac{T})&=\{12\}\sqcup \{9,10,11\}\sqcup \{7,8\}\sqcup\{4,5,6\}\sqcup\{1,2,3\}.  
    \end{align*}  
\end{eg}

We record some additional consequences separately as they will be useful in proving that the pairs $(v_T,w_T)$ satisfy a stronger condition than being well-aligned.
We omit the proof as it amounts to unraveling Definition~\ref{def:richardsonpairs} which tells  us how to obtain $(v_T,w_T)$ from the appropriate reading words, together with the correspondence $\Pi(T)\leftrightarrow \Pi(\evac{T})$ of Corollary~\ref{co:pi_evac} needed to phrase the data for $w_T$ in terms of the blocks of $\evac{T}$.

\begin{cor}\label{co:positions_of_1_to_k_and_descents}
Fix $\lambda\vdash n$ and let $\lambda_1=k$.
For $T\in \richtab(\lambda)$, consider the column strip decomposition of $\evac{T}$. Suppose that $m_1$ through $m_k$ (resp. $M_1$ through $M_k$) are the smallest (resp. largest) entries within each block in increasing order.
\begin{enumerate}
  \item The positions of $1,2,\dots,k$ in $v_T$ are given by $m_1$ through $m_k$, and $$\des{v_T}= \{M_1,\dots,M_{k-1}\}.$$
  \item The positions of $1,2,\dots,k$ in $w_T$ are given by $M_1$ through $M_k$, and $$\des{w_T}= \{M_1,\dots,M_{k-1}, M_k\}^c.$$
\end{enumerate}
Since $M_k=n$, the descent sets $\des{v_T}$ and $\des{w_T}$ partition $[n-1]$.
\end{cor}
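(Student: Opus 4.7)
The plan is to unpack the definitions of $v_T$ and $w_T$ using the block decomposition $\Pi(\evac{T})$ from Lemma~\ref{le:column_strips_first_row}, together with the following geometric ingredient about Richardson tableaux: within each block, the entries are arranged so that the row index strictly increases as the entry value increases. The hard part is establishing this within-block row monotonicity, since blocks may skip rows (for example the block $\{5,6\}$ in the tableau $T$ of Example~\ref{ex:evacuation} occupies rows $1$ and $4$). I would prove it by induction on $|T|$: the defining Richardson property handles the case where $c+1$ lies in row $2$, and if instead $c+1$ is in row $\geq 3$, then both $c$ and $c+1$ lie in rows $\geq 2$ and map to a consecutive pair $(c',c'+1)$ in the same block of $\crop(T)$ (since $c+1 \notin \First(T)$ forces $c'+1 \notin \First(\crop(T))$), allowing the inductive hypothesis to finish.

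With monotonicity in hand, the position claims are direct reading-word bookkeeping. Since $v_T^{-1}$ in one-line notation is the top-down reading word of $\evac{T}$, its first $k$ entries are the top row of $\evac{T}$ read left to right, which are the block minima $m_1 < \cdots < m_k$ in order; hence $v_T^{-1}(j) = m_j$. For $w_T$, the defining relation $w_o w_T^{-1} w_o = \pi$ (with $\pi$ the reading word of $T$) gives $w_T^{-1}(i) = n+1-\pi(n+1-i)$, and the last $k$ entries of $\pi$ read off the top row $(a_1,\ldots,a_k)$ of $T$ left to right, so $w_T^{-1}(i) = n+1-a_{k+1-i}$. Combining with the bijection between blocks of $T$ and of $\evac{T}$ sending $j \mapsto n+1-j$---which yields $M_j = n+1-a_{k+1-j}$---produces $w_T^{-1}(i) = M_i$ for $i \in [k]$.

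For the descent statements I would use the tautology that $i \in \des{\tau}$ iff $i+1$ precedes $i$ in the one-line notation of $\tau^{-1}$. For $v_T$: if $i, i+1$ lie in the same block of $\evac{T}$, then within-block monotonicity places $i$ strictly above $i+1$, so $i$ precedes $i+1$ in the top-down reading and there is no descent at $i$; every descent of $v_T$ therefore forces $i = M_j$ for some $j < k$. For $w_T$: rewriting $w_T^{-1}(j) = n+1 - \sigma(j)$ with $\sigma$ the top-to-bottom, right-to-left reading of $T$, the condition $i \in \des{w_T}$ becomes ``$m := n-i$ precedes $m+1$ in $\sigma$''. A two-case analysis on whether $m, m+1$ lie in the same block of $T$ finishes: in the same-block case within-block monotonicity places $m$ in a strictly earlier row and hence earlier in $\sigma$; in the different-block case $m+1 = a_j \in \First(T)$ sits in row $1$ of $T$ while $m$ is either strictly to the right of $m+1$ in row $1$ or strictly lower, so $m+1$ precedes $m$ in $\sigma$. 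This identifies $\des{w_T}$ as the complement of $\{M_1,\ldots,M_{k-1}\}$ in $[n-1]$, in particular contained in $\{M_1,\ldots,M_k\}^c$.
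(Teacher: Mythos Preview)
Your proof is correct and is precisely the ``unraveling'' the paper alludes to when it omits the argument entirely. The one substantive step you isolate---within-block row monotonicity---is genuinely needed and genuinely uses the Richardson hypothesis (a column strip of consecutive integers in a non-Richardson SYT can fail it), and your induction via $\crop$ handles it cleanly. One small patch: in the case where $c+1$ lies in row $\ge 3$ you assert that both $c$ and $c+1$ lie in rows $\ge 2$, but $c$ could be the block minimum $a_i$ sitting in row $1$; that sub-case is immediate since then $\mathrm{row}(c)=1<\mathrm{row}(c+1)$, so just record it separately before passing to $\crop(T)$.
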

The parallel between parts (1) and (2) of Corollary~\ref{co:positions_of_1_to_k_and_descents} is a reflection of the fact that evacuation is an involution. The interested reader is also referred  to \cite[Proposition 6.4]{KaPr25} as an alternative means to deriving part (2) from part (1); the result in \textit{loc. cit.} tells us how  $w_T$ may be computed from $v_{\evac{T}}$.

\begin{defn}
  \label{de:very}
  We call $(v,w)\in\wellaligned{n}$ \emph{very well-aligned} if $(ww_{\circ},vw_{\circ})$ is well-aligned as well.
\end{defn}
Since right multiplication by $w_{\circ}$ reverses the one line notation of a permutation, we see that $(v,w)$ is very well-aligned if and only if the following hold:
\begin{enumerate}[label=(\roman*)]
        \item \label{item:very_well_aligned_1}$v^{-1}(1)\leq w^{-1}(1)$,
        \item \label{item:very_well_aligned_2}all indices $v^{-1}(1)\leq i\leq w^{-1}(1)-1$ are ascents in $v$, 
        \item \label{item:very_well_aligned_3}all indices $v^{-1}(1)\leq i\leq w^{-1}(1)-1$ are descents in $w$, and
        \item \label{item:very_well_aligned_4}$(\delta(v),\delta(w))$ is very well-aligned.
    \end{enumerate}

\begin{lem}\label{le:richardson_very_well_aligned}
  Fix $\lambda\vdash n$ and let $T\in \richtab(\lambda)$. 
  Then $(v_T,w_T)$ is very well-aligned. 
\end{lem}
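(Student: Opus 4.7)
The plan is to induct on $n = |T|$, strengthening the conclusion to a \emph{block monotonicity} property: $v_T$ is strictly increasing, and $w_T$ strictly decreasing, on each interval $[m_i, M_i]$ arising from $\evac{T}$'s column-strip decomposition. To see why block monotonicity implies all of (i)--(iv), observe that after $j$ applications of $\delta$ the positions of $1$ in $\delta^j v_T$ and $\delta^j w_T$ are $m_{j+1} - j$ and $M_{j+1} - j$ respectively (the removed positions $m_1, \dots, m_j$ and $M_1, \dots, M_j$ all lie strictly below $m_{j+1}$ since the blocks are ordered), and on $[m_{j+1}-j,\, M_{j+1}-j]$ one has $\delta^j v_T(p) = v_T(p+j) - j$ and $\delta^j w_T(p) = w_T(p+j) - j$; conditions (i)--(iii) at level $j$ thus translate into $m_{j+1} \le M_{j+1}$ (trivial) together with monotonicity of $v_T$ and $w_T$ on $[m_{j+1}, M_{j+1}]$. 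Condition (iv) reduces all the way to $j = \lambda_1$, where $(\delta^{\lambda_1}v_T, \delta^{\lambda_1}w_T) = (v_{\crop(T)}, w_{\crop(T)})$ is very well-aligned by the outer inductive hypothesis applied to $\crop(T) \in \richtab(\crop(\lambda))$.

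The increasing half of block monotonicity for $v_T$ is an immediate consequence of \Cref{co:positions_of_1_to_k_and_descents}(1): the inclusion $\des{v_T} \subseteq \{M_1, \dots, M_{k-1}\}$, together with the fact that the blocks are pairwise disjoint intervals ordered so that no $M_j$ lies in $[m_i, M_i - 1]$, prevents descents of $v_T$ inside any block. The decreasing half for $w_T$ is the substantive step. Using $w_T(p) = \rho^{-1}(n+1-p)$ with $\rho$ the right-to-left top-down reading word of $T$, together with the relations $m_i = n+2 - a_{k+2-i}$ and $M_i = n+1 - a_{k+1-i}$ between $\evac{T}$'s blocks and $T$'s first-row entries, the decreasing condition translates into the claim that the entries of $T$'s $(k+1-i)$-th column strip appear in $\rho$ in strictly increasing order of value. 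Since each strip entry is the rightmost in its row (being the largest entry of its row, as any other strip entry in the same row would violate the column-strip condition), this is equivalent to the following structural claim, which I would establish by a separate induction on $n$: \emph{in a Richardson tableau $T$, every column strip $B_j^T = \{a_j, \ldots, a_{j+1}-1\}$ from \Cref{le:column_strips_first_row} occupies rows that strictly increase as the values increase.} The inductive step observes that $\{a_j + 1, \ldots, a_{j+1} - 1\}$ (in rows $\ge 2$ of $T$) standardizes in $\crop(T)$ to the consecutive interval $\{a_j - j + 1, \ldots, a_{j+1} - j - 1\}$; a two-case analysis, depending on whether $a_j + 1$ lies in row $2$ of $T$ (where this interval begins a block of $\crop(T)$) or in row $\ge 3$ (where the inequality $a_j + 1 \ne a_{j+1}$ supplies the gap required for containment), shows that this interval sits inside a single column-strip block of $\crop(T)$; inductive monotonicity in $\crop(T)$ then lifts to $T$ by adding $1$ to each row index, with $a_j$ sitting in row $1$ completing the block.

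The main obstacle is the auxiliary column-strip row-monotonicity claim above: \Cref{co:positions_of_1_to_k_and_descents} supplies only an upper bound for $\des{w_T}$, so the required descents of $w_T$ in $[m_i, M_i - 1]$ cannot be read off from the corollary and must instead be extracted from the intrinsic Richardson structure via the reading-word formulation together with the separate structural induction sketched above. Once the claim is in hand, however, the rest of the argument is a bookkeeping exercise translating block monotonicity through $\delta$-levels into the verbatim conditions (i)--(iv) at every level simultaneously.
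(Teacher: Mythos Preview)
Your proposal is correct and follows essentially the same scaffold as the paper: induct on $n$ via $\crop(T)$, use the column-strip decomposition of $\evac{T}$ to pin down the positions $m_i,M_i$ of $1,\ldots,k$ in $v_T,w_T$, verify conditions (i)--(iii) on each block for all levels $0\le j\le k-1$, and then invoke $(\delta^{k}v_T,\delta^{k}w_T)=(v_{\crop(T)},w_{\crop(T)})$ together with the outer induction. The paper's proof does exactly this, but compresses the block-monotonicity check into the single sentence ``We further know that all indices in the range $[m_i,M_i-1]$ are ascents in $v_T$ and descents in $w_T$,'' attributed to \Cref{co:positions_of_1_to_k_and_descents}. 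You are right that the corollary as literally stated only gives the containment $\des{w_T}\subseteq\{M_1,\ldots,M_k\}^c$, not the reverse inclusion needed here; your separate inductive argument for the row-monotonicity of the column strips $B_j^T$ supplies precisely the missing direction, and your inductive step (reducing to a single block of $\crop(T)$ via the Richardson condition on row~$2$) is correct.

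One cosmetic slip: the parenthetical ``each strip entry is the rightmost in its row (being the largest entry of its row, \ldots)'' is false---in the running example, entry $2$ lies in row~$1$ column~$2$ of $T$, far from rightmost. This does not affect your argument: the equivalence you actually need (strip entries appear in the row-wise reading word in increasing order of value $\Longleftrightarrow$ their rows increase with value) follows immediately from the column-strip condition alone, since no two strip entries share a row and the reading word proceeds row by row. Just drop the parenthetical.
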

\begin{proof}
  Let $k\coloneqq \lambda_1$.
  By Corollary~\ref{co:positions_of_1_to_k_and_descents}, we know that $v_T^{-1}(i)=m_i$ and $w_T^{-1}(i)=M_i$ for $1\leq i\leq k$, and $m_i\leq M_i$ for $1\leq i\leq k$.
  Since the blocks are intervals of consecutive integers, the range $[m_i,M_i-1]$ contains none of $M_1,\dots,M_k$; hence the descent-set equalities of Corollary~\ref{co:positions_of_1_to_k_and_descents} give that every such index is an ascent in $v_T$ and a descent in $w_T$.
  In particular, Conditions~\ref{item:very_well_aligned_1}--\ref{item:very_well_aligned_3} hold for $(\delta^i(v_T),\delta^i(w_T))$ all $0\leq i\leq k-1$. To establish the claim observe that 
  \[
    (\delta^k(v_T),\delta^k(w_T))=(v_{\crop(T)},w_{\crop(T)}).
  \]

  We briefly explain this equality. 
  Note that $v_{\crop(T)}$ can be read off from $\evac{\crop(T)}$, but this latter tableau equals $\crop(\evac{T})$. Cropping the first row of $\evac{T}$, whose entries correspond to the positions of $1$ through $k$ in $v_T$, and then standardizing the remaining entries to obtain an SYT then amounts to applying $\delta^{k}$ to $v_T$.
  Similarly, the positions of $1$ through $k$ in $w_T$, considered from the right, are determined by the first row of $T$. Cropping the first row of $T$ and then standardizing amounts to applying $\delta^{k}$ to $w_T$.
\end{proof}
% \begin{proof}
%   Let $k\coloneqq \lambda_1$.
%   By Corollary~\ref{co:positions_of_1_to_k_and_descents}, we know that $v_T^{-1}(i)=m_i$ and $w_T^{-1}(i)=M_i$ for $1\leq i\leq k$, and $m_i\leq M_i$ for $1\leq i\leq k$.
%   We further know that all indices in the range $[m_i,M_i-1]$ are ascents in $v_T$ and descents in $w_T$.
%   In particular, Conditions~\ref{item:very_well_aligned_1}--\ref{item:very_well_aligned_3} hold for $(\delta^i(v_T),\delta^i(w_T))$ all $0\leq i\leq k-1$. To establish the claim observe that 
%   \[
%     (\delta^k(v_T),\delta^k(w_T))=(v_{\crop(T)},w_{\crop(T)}).
%   \]

%   We briefly explain this equality. 
%   Note that $v_{\crop(T)}$ can be read off from $\evac{\crop(T)}$, but this latter tableau equals $\crop(\evac{T})$. Cropping the first row of $\evac{T}$, whose entries correspond to the positions of $1$ through $k$ in $v_T$, and then standardizing the remaining entries to obtain an SYT then amounts to applying $\delta^{k}$ to $v_T$.
%   Similarly, the positions of $1$ through $k$ in $w_T$, considered from the right, are determined by the first row of $T$. Cropping the first row of $T$ and then standardizing amounts to applying $\delta^{k}$ to $w_T$.
% \end{proof}
\begin{eg}
  For the Richardson tableau $T$ in Example~\ref{ex:cropping}, we have $\crop(T)$ and $\evac{\crop(T)}$ equaling the following tableaux.
  \begin{align*}
    \begin{ytableau}
        1 & 4 & 6\\
        2 & 5\\
        3 & 7
      \end{ytableau}\qquad
      \begin{ytableau}
        1 & 3 & 5\\
        2 & 6\\
        4 & 7
      \end{ytableau}
  \end{align*}
  It follows that
   $$\begin{tabular}{cccccccccccccc}
$v_{\crop(T)}=$&1& 4& 2& 6&  3& 5& 7\\
$w_{\crop(T)}=$&6& 1& 4& 2& 7& 5& 3
\end{tabular}$$
  which the reader can check agree with $\delta^5(v_T)$ and $\delta^5(w_T)$ respectively from Example~\ref{ex:vt_wt}.
\end{eg}

\begin{eg}
We borrow the well-aligned pair $(v_T,w_T)=(1523467,7123654)$ from \cite[Example 10.9]{KaPr25} for ease of comparison, and use  Theorem~\ref{th:well_aligned_to_dominant} to compute the expansion
$[X^w_v]=\sum_{u\in S_n} c^{w}_{u,v}[X^u].$
We have $v_T^{\uparrow}=4512367$ and $v_T^{\uparrow}v_T^{-1}w_T=7412653$.
The Schubert polynomial indexed by $v_T$ is the Schur polynomial $s_{(3)}(x_1,x_2)$ whereas the Schubert polynomial indexed by $v_T^{\uparrow}$ is the dominant monomial $x_1^3x_2^3$.
For $w_{\circ}=n(n-1)\cdots 1$, by the identity $c^w_{u,v}=c^{w_{\circ}u}_{v,w_{\circ}w}$ whenever $u,v,w\in S_n$ we have
$$\schub{v}\schub{w_{\circ}w}=\sum_{u} c^{u}_{v,w_{\circ}w}\schub{u}=\sum_{u\in S_n}c^{w}_{w_{\circ}u,v}\schub{u}+\sum_{u\in S_{\infty}\setminus S_n}c^{u}_{v,w_{\circ}w}\schub{u},$$ 
and so applying this for $v=v_T^{\uparrow}$ and $w=v_T^{\uparrow}v_T^{-1}w_T$, if we restrict this sum to those $u\in S_n$ we can read off the well-aligned Schubert coefficients (after reindexing the permutations $u\mapsto w_{\circ}u$). From Figure~\ref{fig:richardson_schubert_example} it follows that
\begin{align*}
\schub{4512367}\cdot \schub{1476235}&=\schub{4765123}+\schub{5763124}+\schub{6735124}+\schub{6752134}+\cdots,
\end{align*}
where we omitted the Schuberts $\schub{u}$ with $u\not\in S_7$, 
so
$$[X^w_v]=[X^{4123765}]+[X^{3125764}]+[X^{2153764}]+[X^{2136754}].$$
%$$[X^w_v]=[X_{4123765}]+[X_{3125764}]+[X_{2153764}]+[X_{2136754}].$$
\end{eg}

We note that prior to our work, the cases in which $[X_{v_T}^{w_T}]$ had been computed  combinatorially in the Schubert basis were limited to tableaux of hook shape \cite{Gu89}; see also the work of Graham--Zierau \cite{GZ11} which applies localization techniques but the expressions produced there are not combinatorial.
We briefly recall the combinatorial interpretation in \cite{Gu89}, recast in the language of \emph{G\"uemes tableaux} in \cite[\S 10]{KaPr25}.
Given a Richardson tableau $T$ of hook shape, the full expansion
involves counting certain semistandard tableaux of staircase shape whose (column) reading words give reduced words.
Our expansion, on the other hand, involves counting certain saturated chains in $k$-Bruhat order, for varying $k$.

\begin{prob}
  Find a bijection between the chains in $k$-Bruhat order that arise in this way for well-aligned pairs $(v_T,w_T)$ associated to a Richardson tableau $T$ of hook shape and G\"uemes tableaux.
\end{prob}
For $u=v_T$ for $T$ a Richardson tableau of hook shape of size $n$, the permutation $u^{\uparrow}$ is relatively straightforward to compute -- it is given by a shuffle of the letters $k,k+1,\dots,n$ and $1,2,\dots,k-1$ for some $k$ (depending on $T$), where the letters in the former appear in increasing order and those in the latter appear in decreasing order, and the first letter of $u^{\uparrow}$ is $k$.
It is easily checked that this is dominant.

\begin{figure}[!ht]
\includegraphics[scale=0.8]{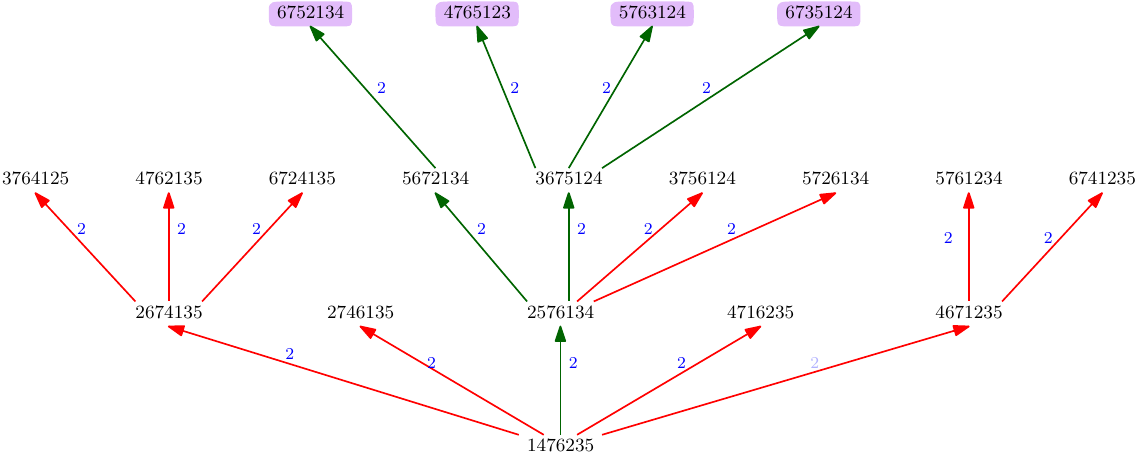}
\caption{The product $x_1^3x_2^3\schub{1476235}$ computed via Corollary~\ref{co:iterated_pieri}. The green chains in $2$-Bruhat order are the ones that contribute. At every level we have only recorded terms indexed by permutations in $S_7$.}
\label{fig:richardson_schubert_example}
\end{figure}

%%%%%%%%%%%%%%%%%%%%%%%%%%%%%%%%%%%%%%%%%%%%%%%%%%%%%
\section{The geometry of well and very-well aligned Richardson varieties}
\label{sec:class_expansions}
%%%%%%%%%%%%%%%%%%%%%%%%%%%%%%%%%%%%%%%%%%%%%%%%%%%%%
We refer the reader to \cite{AF24,BL00,BGP25,WY23} for detailed treatment of notions/results in Schubert calculus that we employ but do not define.
Throughout we work over $\CC$. We denote by $\GL_{n}$ the group of invertible $n\times n$ matrices.
Let $B$ and $B^-$  be the subsets of  $\GL_{n}$ comprising  upper triangular and lower triangular invertible $n\times n$ matrices respectively. 
A \emph{flag} in $\CC^n$ is a sequence of subspaces $V_1\subset V_2\subset \cdots \subset V_n=\CC^n$ where $\dim(V_i)=i$.
The \emph{complete flag variety} $\flag{n}$ is the set of all flags in $\CC^n$.
The group $\GL_n$ acts transitively on $\flag{n}$ via its natural action on $\CC^n$.
The stabilizer of the \emph{standard flag} $E_{\bullet}=E_1\subset E_2\subset \cdots \subset E_n=\CC^n$, where $E_i=\mathrm{span}\{e_1,\dots,e_i\}$, is $B$.
Thus we have $\flag{n}\cong \GL_n/B$.

For $v,w \in S_{n}$, we will denote the \emph{Schubert cells} in $\fl{n}$ by $\mathring{X}^w=BwB/B\cong \mathbb{A}^{\ell(w)}$ and \emph{opposite Schubert cells} by $\mathring{X}_v=B^-vB/B\cong \mathbb{A}^{\binom{n}{2}-\ell(v)}$. We will denote their closures the \emph{Schubert varieties} in $\fl{n}$ by $X^w=\overline{BwB/B}$, the \emph{opposite Schubert varieties} by $X_v=\overline{B^-vB/B}$, and for $v\le w$ the \emph{Richardson varieties} $X_v^w\coloneqq X^w\cap X_v=\overline{BwB/B}\cap \overline{B^-vB/B}$.
 By work of Borel \cite{Bor53}, we have an identification  $H^\bullet(\fl{n})=\ZZ[\xl_n]/\symide{n}$ where $\symide{n}$ is the ideal generated by positive degree symmetric polynomials in $x_1$ through $x_n$.
The classes $[X^w]\in H_\bullet(\fl{n})$ of the Schubert varieties for $w\in S_n$ form a homology basis Kronecker dual to the cohomology basis $\{\schub{v}(x_1,\ldots,x_n)\suchthat v\in S_n\}\subset H^{\bullet}(\flag{n})$ of Schubert polynomials \cite{LS82}, which themselves represent the Poincar\'e dual classes to the opposite Schubert varieties $X_v$ in $H^\bullet(\fl{n})$. 

We have an expansion $$[X^w_v]=\sum c^w_{u,v}[X^u]$$ where $c^w_{u,v}$ are the generalized Littlewood--Richardson coefficients appearing in  $\schub{u}\schub{v}=\sum c^w_{u,v}\schub{w}$. 
More generally, writing $\deg_{X^w_v}f$ for the degree $\int_{X^w_v}f$ of the top class of $f$ on $X^w_v$, we have
\[
\deg_{X^w_v}f=\ev_0\partial_w(\schub{v}f)=\Phi^w_vf.\]
Indeed, on the Schubert basis $\deg_{X^w_v}\schub{u}=\langle [X^w_v],\schub{u}\rangle=c^w_{u,v}=\ev_0\partial_w(\schub{v}\schub{u})$; the displayed identity then follows by linearity in $f$.
In particular,
$\deg_{X^w_v}\schub{u}=c^w_{u,v}$.

% More generally, for $f\in H^\bullet(\fl{n})$ we have
% \[
% \deg_{X^w_v}f=\ev_0\partial_w(\schub{v}f)=\Phi^w_vf.\] 
% In particular, 
% $\deg_{X^w_v}\schub{u}=c^w_{u,v}$.

\subsection{Translation equivalence for well-aligned Richardson varieties}

Recall that the \emph{generalized Pl\"ucker functions} of $M\in \GL_n$ are given by
$$\plucker{w}(M)=\prod_{k=1}^n \det M^{1,\ldots,k}_{w(1),\ldots,w(k)}$$
where $M^A_B$ is the submatrix of $M$ determined by the columns from $A$ and the rows of $B$. 
We have $\plucker{w}(w'M)=\pm \plucker{(w')^{-1}w}(M)$,  $\plucker{w}(Bw'B)=0$ whenever $w\not\le_B w'$, and $\mathring{X}^w\subset \{\plucker{w}\ne 0\}$.
In particular, $w'\{\plucker{w}=0\}=\{\plucker{w'w}=0\}$ for all $w'\in S_n$.
The characterization that follows is well known; we include a proof for completeness.
\begin{lem}\label{le:plucker_vanishing}
    If $v\le_B w$ then we can write $$X^w_v=\bigcap_{u\not\in [v,w]}\{\plucker{u}=0\}.$$
\end{lem}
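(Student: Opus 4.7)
The plan is to decompose $X^w_v = X^w \cap X_v$ and establish separate Plücker vanishing descriptions of each factor. Specifically, I will show
\begin{align*}
X^w &= \bigcap_{u \not\leq_B w} \{\plucker{u} = 0\}, \\
X_v &= \bigcap_{u \not\geq_B v} \{\plucker{u} = 0\}.
\end{align*}
Since $u \in [v,w]$ is equivalent to $u \leq_B w$ together with $u \geq_B v$, the complement of $[v,w]$ in $S_n$ is the union $\{u : u \not\leq_B w\} \cup \{u : u \not\geq_B v\}$, so intersecting these two equalities yields the lemma.

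For the first equality, I invoke the Bruhat decomposition $X^w = \bigsqcup_{w' \leq_B w} B w' B$. The forward containment is immediate: for $u \not\leq_B w$ and $w' \leq_B w$, transitivity of $\leq_B$ gives $u \not\leq_B w'$, so the quoted vanishing $\plucker{u}(B w' B) = 0$ applies. Conversely, if $gB$ lies in the right-hand side, let $w'$ be the unique permutation with $gB \in B w' B$; the quoted inclusion $B w' B \subset \{\plucker{w'} \ne 0\}$ forces $\plucker{w'}(gB) \ne 0$, hence $w' \leq_B w$ and thus $gB \in X^w$.

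The second equality is proved in exactly the same way using the opposite Bruhat decomposition $X_v = \bigsqcup_{v' \geq_B v} B^- v' B$, once we have the opposite-side analogues of the quoted Plücker facts: $\plucker{u}(B^- v' B) = 0$ whenever $u \not\geq_B v'$, and $B^- u B \subset \{\plucker{u} \ne 0\}$. The main obstacle is justifying these two opposite-side statements, since the excerpt records only the $B$-side versions. A clean route is to use the involution $\phi : \flag{n} \to \flag{n}$ defined by $\phi(gB) = w_0 g B$, which interchanges the $B$ and $B^-$ stratifications via $\phi(B w' B) = B^- (w_0 w') B$ and pulls $\plucker{u}$ back to a sign times $\plucker{w_0 u}$; combined with the Bruhat reversal $a \leq_B b \iff w_0 a \geq_B w_0 b$, this reduces both opposite facts to the quoted $B$-side statements. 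Alternatively one may argue by direct computation, parametrizing a generic element of $B^- v' B$ as $L v'$ with $L$ lower-triangular unipotent and expanding the minors $\det(Lv')^{1,\ldots,k}_{u(1),\ldots,u(k)}$, using the tableau criterion (Theorem~\ref{th:tableau_criterion}) to translate the resulting sorted-prefix conditions back into the Bruhat comparison $u \geq_B v'$.
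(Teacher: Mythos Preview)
Your proof is correct and follows essentially the same approach as the paper's: reduce to separate Pl\"ucker vanishing descriptions of $X^w$ and $X_v$, and verify each using the Bruhat decomposition together with the two facts $\plucker{u}(Bw'B)=0$ for $u\not\le_B w'$ and $Bw'B\subset\{\plucker{w'}\ne 0\}$. The only cosmetic differences are that the paper establishes $X^w\subset\bigcap_{u\not\le_B w}\{\plucker{u}=0\}$ by noting the right-hand side is closed and contains $BwB$ (hence its closure), whereas you check vanishing cell by cell; and the paper simply declares the $X_v$ case ``similar,'' while you spell out the $w_0$-involution that converts the $B$-side facts into their $B^-$-side analogues.
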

\begin{proof}
    It suffices to show the statement for $X^w$ and $X_v$. 
    We do this for $X^w$, the case for $X_v$ is similar. 
    We have to show that 
    \[
    X^w=\bigcap_{u\not\le_B w}\{\plucker{u}=0\}.
    \] 
    Recall the Bruhat decomposition $\GL_n/B=\bigsqcup_{u\in S_n}\mathring{X}^u$. 
    Note that $X^w$ and $\bigcap_{u\not\le_B w}\{\plucker{u}=0\}$ are both closed subvarieties containing $\mathring{X}^w$ (and hence containing the closure $X^w$), and disjoint from $\mathring{X}^u$ for $u\not\le_B w$. The claim now follows from the equality
    \[
    X^w=\bigcup_{u\le_B w}\mathring{X}^u.\qedhere
    \]
\end{proof}

% \begin{prop}
% \label{pro:richardson_translate}
%     If $[v,w]$ is translation-equivalent to $[v',w']$, then $X_{v'}^{w'}=v'v^{-1}X_v^w.$ 
% \end{prop}
% \begin{proof}
%     This follows from  Lemma~\ref{le:plucker_vanishing} and the $S_n$-equivariance of generalized Pl\"ucker functions up to sign.
% \end{proof}

\begin{prop}\label{pro:richardson_translate}
    If $[v,w]$ is translation-equivalent to $[v',w']$, then $X_{v'}^{w'}=v'v^{-1}X_v^w.$
\end{prop}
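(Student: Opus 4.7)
The plan is to leverage the Plücker coordinate characterization of Richardson varieties provided by \Cref{le:plucker_vanishing}, which expresses $X^w_v$ as the intersection of the hypersurfaces $\{\plucker{u}=0\}$ for $u \notin [v,w]$. The key insight is that both $X^{w'}_{v'}$ and $v'v^{-1}X^w_v$ should satisfy the same system of vanishing Plücker equations, once we invoke the translation-equivalence hypothesis.

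I would begin by setting $g = v'v^{-1}$ and writing
\[
gX^w_v = \{N : g^{-1}N \in X^w_v\} = \bigcap_{u \notin [v,w]} \{N : \plucker{u}(g^{-1}N) = 0\}.
\]
Next, I would apply the transformation law for Plücker functions under left multiplication by a permutation matrix ($\plucker{u}(\sigma M)$ equals, up to sign, a single Plücker coordinate in the variable $M$) to convert each equation $\plucker{u}(g^{-1}N) = 0$ into one of the form $\plucker{u'}(N) = 0$, where $u'$ is obtained from $u$ by composition with $g^{\pm 1}$ on the appropriate side. Re-indexing via $u \leftrightarrow u'$, the defining equations for $gX^w_v$ become $\{\plucker{u'}=0 : u' \notin v'v^{-1}[v,w]\}$.

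The final step is to invoke translation-equivalence: rearranging $v^{-1}[v,w] = (v')^{-1}[v',w']$ by left-multiplying both sides by $v'$ yields $[v',w'] = v'v^{-1}[v,w]$. Hence the defining Plücker equations for $gX^w_v$ and those for $X^{w'}_{v'}$ agree term by term, and another application of \Cref{le:plucker_vanishing} (this time in reverse) identifies the two varieties.

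The main technical wrinkle is bookkeeping the direction of composition in the Plücker transformation rule---whether left multiplication by $g$ on the matrix corresponds to multiplication by $g$ or by $g^{-1}$, and on which side of the permutation index---so that the resulting translate of $[v,w]$ is a \emph{left} translate, matching the definition of translation-equivalence. A sanity check in a small example (e.g.\ $n=2$ with $[v,w]=[\idem,\idem]$ and $[v',w']=[s_1,s_1]$, where $s_1 X^\idem_\idem = X^{s_1}_{s_1}$ as the Schubert point sent to the opposite Schubert point) confirms the direction; once it is aligned, the proof reduces to a direct substitution of indices.
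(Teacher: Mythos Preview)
Your proposal is correct and follows essentially the same approach as the paper: the paper's proof is a one-line appeal to \Cref{le:plucker_vanishing} together with the $S_n$-equivariance (up to sign) of the generalized Pl\"ucker functions, and you have simply written out that argument in more detail. Your bookkeeping concern is real but resolves exactly as you indicate, since left multiplication by $g^{-1}$ on the matrix corresponds to left multiplication by $g$ on the Pl\"ucker index, matching the left-translation in the definition of translation-equivalence.
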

\begin{proof}
    Translation-equivalence gives $v'v^{-1}[v,w]=[v',w']$, so by Lemma~\ref{le:plucker_vanishing} and the $S_n$-equivariance of generalized Pl\"ucker functions up to sign we have \begin{equation*}
    v'v^{-1}X^w_v=\bigcap_{u\notin [v,w]}\{\plucker{v'v^{-1}u}=0\}=\bigcap_{u'\notin [v',w']}\{\plucker{u'}=0\}=X^{w'}_{v'}.\qedhere\end{equation*}
\end{proof}

\begin{cor}\label{co:well_aligned_translate}
    If $(v,w)\in \wellaligned{n}$, then $(v^{\uparrow}v^{-1})X^w_v=X^{v^{\uparrow}v^{-1}w}_{v^{\uparrow}}$. In particular, writing $v'=v^{\uparrow}$ and $w'=v^{\uparrow}v^{-1}w$, we have
    $[X^w_v]=[X^{w'}_{v'}]\in H_\bullet(\fl{n}).$
\end{cor}
\begin{proof}
    By Proposition~\ref{pr:to_dominant}(2), the intervals $[v,w]$ and $[v^{\uparrow},v^{\uparrow}v^{-1}w]$ are translation-equivalent, so the first assertion is the special case $(v',w')=(v^{\uparrow},v^{\uparrow}v^{-1}w)$ of the preceding proposition. Since left translation by a permutation matrix is an automorphism of $\fl{n}$ acting trivially on homology, the two Richardson varieties have equal classes.
\end{proof}

Taking homology classes in Proposition~\ref{pro:richardson_translate} proves Fact~\ref{fact:translation_coeff}: if $[v,w]$ and $[v',w']$ are translation-equivalent then $[X^w_v]=[X^{w'}_{v'}]$ in $H_\bullet(\fl{n})$, and comparing Schubert expansions yields $c^w_{u,v}=c^{w'}_{u,v'}$.

% \begin{cor}
% \label{co:well_aligned_translate}
%     If $(v,w)\in \wellaligned{n}$, then $(v^{\uparrow}v^{-1})X^w_v=X^{v^{\uparrow}v^{-1}w}_{v^{\uparrow}}$. In particular
%     $[X^w_v]=[X^{w'}_{v'}]\in H_\bullet(\fl{n}).$
% \end{cor}

\begin{rem}\label{rem:kl}
Suppose $[v,w]$ and $[v',w']$ are translation-equivalent.
Because the equivalence is realized by the single left translation
$g = v'v^{-1}$, and $(gx)^{-1}(gy) = x^{-1}y$ for all $x,y$, the map $g$
carries the Bruhat graph of $[v,w]$ isomorphically onto that of
$[v',w']$, preserving the reflection labeling of edges exactly.
Since the Kazhdan--Lusztig $R$-polynomial $R(v,w)$ is computed from increasing paths in this labelled graph
with respect to any reflection ordering \cite{Dy93}, and every
increasing path from $v$ to $w$ stays inside $[v,w]$, we conclude
$R(v,w) = R(v',w')$: combinatorial invariance in the sense of
Dyer holds within each translation-equivalence class.
We thank an anonymous referee for this suggestion.
% Because translation-equivalence is realized by the single left translation
% $g = \widetilde{v}v^{-1}$, and $(gx)^{-1}(gy) = x^{-1}y$ for all $x,y$, the map $g$
% carries the Bruhat graph of $[v,w]$ isomorphically onto that of
% $[\widetilde v,\widetilde w]$, preserving the reflection labeling of edges exactly.
% Since the Kazhdan--Lusztig $R$-polynomial $R(v,w)$ is computed from increasing paths in this labelled graph
% with respect to any reflection ordering \cite{Dy93}, and every
% increasing path from $v$ to $w$ stays inside $[v,w]$, we conclude
% $R(v,w) = R(\widetilde v,\widetilde w)$: combinatorial invariance in the sense of
% Dyer holds within each translation-equivalence class.
% We thank an anonymous referee for this suggestion.
\end{rem}

%%%%%%%%%%%%%
\subsection{Building well-aligned Richardsons from pattern maps and geometric push-pull}
%%%%%%%%%%%%%%%%

Let $\Psi_{1,i}:\GL_{n-1}/B\to \GL_n/B$ be the map which takes $MB$ to $M'B$ where $M'$ is obtained by including $M$ into an $n\times n$ matrix avoiding the first row and $i$th column, and then inserting a $1$ into the first row and $i$th column. For example
$$\Psi_{1,2}\begin{bmatrix}a&b\\c&d\end{bmatrix}=\begin{bmatrix}0&1&0\\a&0&b\\c&0&d\end{bmatrix}.$$
This is an inclusion map, and as shown in \cite[Theorem 4.4]{nst_c} we have 
$$
\Psi_{1,i}X_v^w=X^{\ins_i(w)}_{\ins_i(v)}.
$$
Furthermore, we have $\Psi_i^*:H^\bullet(\fl{n})\to H^\bullet(\fl{n-1})$ is given by $\rope{i}$, so for $f(x_1,\ldots,x_n)\in H^\bullet(\fl{n})$ we have
$$\deg_{X^{\ins_i(w)}_{\ins_i(v)}}f=\deg_{X^w_v}\rope{i}f.$$
Let $\pi_i:\GL_n/B\to \GL_n/P_i$ be the projection map where $P_i$ is the minimal parabolic subgroup associated to $s_i$. 
Then by the classical computation of Bernstein--Gelfand--Gelfand \cite{BGG73} we have $(\pi_i)^*(\pi_i)_*f=\partial_i f$. 
This push-pull identity is explained in several places; see \cite[Chapter 3]{Man01} and \cite[Chapter 10, Lemma 6.5]{AF24}.
We claim that if $i$ is an ascent of both $v$ and $w$ we have $$\pi_i^{-1}\pi_iX^w_v=X^{ws_i}_{v},$$ with $\pi_i|_{X^w_v}$ generically injective. 
Indeed, we have $\pi_i^{-1}\pi_iX^w\subset X^{ws_i}$ and $\pi_i^{-1}\pi_iX_v\subset X_v$ which shows the left hand side is contained in the right hand side, and the identity $\Phi^{ws_i}_v=\Phi^w_v\partial_i$, verified in the proof of \Cref{lem:Phisimplify} implies that $(\pi_i^*)(\pi_i)_*[X^w_v]=[X_v^{ws_i}]$. In particular, we have
$$\deg_{X^{ws_i}_v}f=\deg_{X^w_v}\partial_if.$$

Therefore because a well-aligned pair can be obtained by successively applying either $(v,w)\mapsto (\ins_iv,\ins_iw)$ and $(v,w)\mapsto (v,ws_i)$ with $i$ an ascent of both $v,w$, we see that $X^w_v$ can be obtained as a successive application of $\Psi_{1,i}$ and $\pi_i^{-1}\pi_i$. Furthermore,
$\Phi^w_vf=\deg_{X^w_v}f,$
so the derivation of $\Phi^w_v$ as a successive application of operations $\rope{i}$ and $\partial_i$ (namely the one recorded in Lemma~\ref{lem:Phisimplify}) follows geometrically from the corresponding degree map facts above.
%%%%%%%%%%%%%%
\subsection{Smoothness and very well-alignedness}
%%%%%%%%%%%%%%

We revisit another result of Karp--Precup in light of ours and offer a generalization.
In \cite[\S 9]{KaPr25} they establish that the Richardson variety $X_{v_T}^{w_T}$ is smooth for all Richardson tableaux $T$.
We show that this smoothness property extends to Richardson varieties coming from very well-aligned pairs.
In this subsection, for a variety $Y$ we write $\operatorname{Sing}Y\subseteq Y$
for its singular locus, i.e. the set of points at which $Y$ is not smooth.
Our proof also uses \cite[Corollary 9.5]{KaPr25},  a consequence of \cite[Corollary 2.10]{BC12} which gives a criterion for smoothness of Richardson varieties by reducing it to testing smoothness at two special $T$-fixed points; we record the precise form we need, together with its short deduction, in \Cref{fact:KaPr25}.

\begin{fact}
\label{fact:KaPr25}
$X_v^w$ is smooth if and only if it is smooth at the points $vB,wB\in X_v^w$.
\end{fact}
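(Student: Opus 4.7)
The plan is to leverage $T$-invariance of the singular locus together with the well-known structure of the singular loci of Schubert and opposite Schubert varieties, so as to propagate smoothness from the two extremal $T$-fixed points $vB$ and $wB$ to all of $X_v^w$. Since $T \subset B \cap B^-$ stabilizes both $X^w$ and $X_v$, the torus acts on $X_v^w$. The singular locus $\operatorname{Sing}(X_v^w)$ is closed and $T$-invariant, so if nonempty it contains a $T$-fixed point; the $T$-fixed points of $X_v^w$ are precisely $\{uB \suchthat v \leq_B u \leq_B w\}$. Hence it suffices to show that $X_v^w$ is smooth at every such $uB$.

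Next, I would establish that at each $T$-fixed point $uB$, the scheme-theoretic intersection $X^w \cap X_v$ is transverse, so $X_v^w$ is smooth at $uB$ if and only if both $X^w$ and $X_v$ are. The key point is root-theoretic: $T_{uB}X^w$ contains the tangent directions to the Schubert cell $BuB/B$ through $uB$, whose $T$-weights form a specific subset of roots, while $T_{uB}X_v$ contains the tangent directions to $B^-uB/B$, whose $T$-weights form the complementary subset. Therefore $T_{uB}X^w + T_{uB}X_v = T_{uB}\flag{n}$, and a dimension count yields the claimed equivalence.

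I would then invoke the classical fact that $\operatorname{Sing}(X^w)$ is closed and $B$-invariant, hence a union of Schubert subvarieties, making it downward-closed in Bruhat order on $[e,w]$; equivalently, the smooth locus of $X^w$ is upward-closed on $[e,w]$. So if $X^w$ is smooth at $vB$, it is smooth at $uB$ for every $v \leq_B u \leq_B w$. Dually, $\operatorname{Sing}(X_v)$ is upward-closed on $[v,w_o]$, so $X_v$ smooth at $wB$ gives smoothness at every such $uB$. Combined with the transversality step, the assumed smoothness of $X_v^w$ at $vB$ and $wB$ forces $X^w$ smooth at $vB$ and $X_v$ smooth at $wB$ (the reverse implications at extremal points being automatic since $vB$ lies in the open opposite Schubert cell of $X_v$ and $wB$ in the open Schubert cell of $X^w$), and then smoothness of both factors at every intermediate $T$-fixed point, completing the argument via the first paragraph.

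The main obstacle is making the transversality claim precise; however, once one sets up the root-theoretic description of tangent spaces at a $T$-fixed point $uB$, it reduces to the observation that the tangent weights of the two cells $BuB/B$ and $B^-uB/B$ partition the roots governing $T_{uB}\flag{n}$, so the enlarged tangent spaces of $X^w$ and $X_v$ at $uB$ a fortiori span. The remaining ingredients—$T$-invariance of the singular locus and the Schubert-subvariety description of $\operatorname{Sing}(X^w)$ and $\operatorname{Sing}(X_v)$—are standard.
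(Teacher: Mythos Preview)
The paper does not prove this statement; it is quoted as \cite[Corollary 9.5]{KaPr25} and used as a black box, so there is no ``paper's proof'' to compare against. Your overall strategy---reduce to $T$-fixed points, then use that the singular locus of $X^w$ (resp.\ $X_v$) is Bruhat-downward (resp.\ upward) closed to propagate from the extremal points---is exactly the standard one, and is essentially how the cited result is proved in the literature.

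There is, however, a genuine gap in your transversality step. Knowing that $T_{uB}X^w+T_{uB}X_v=T_{uB}\flag{n}$ gives you the ``if'' direction (both factors smooth $\Rightarrow$ intersection smooth), but it does \emph{not} by itself give the converse you need at the extremal points. Concretely, at $vB$ you want ``$X_v^w$ smooth and $X_v$ smooth $\Rightarrow$ $X^w$ smooth''; from spanning of tangent spaces you only get $\dim(T_{vB}X^w\cap T_{vB}X_v)=\dim T_{vB}X^w-\ell(v)$, and combining with $\dim T_{vB}X_v^w=\ell(w)-\ell(v)$ and $T_{vB}X_v^w\subset T_{vB}X^w\cap T_{vB}X_v$ yields no upper bound on $\dim T_{vB}X^w$. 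What is actually needed is the local product structure: since $X^w$ is $B$-invariant, the action of $U\cap vU^-v^{-1}$ gives an open neighborhood of $vB$ in $X^w$ isomorphic to $\mathbb{A}^{\ell(v)}\times X_v^w$, so $X^w$ is smooth at $vB$ iff $X_v^w$ is (and dually at $wB$ using $B^-$-invariance of $X_v$). This is a strictly stronger input than the tangent-space spanning you argue, though it uses the same $B$-invariance you already invoke elsewhere. Once you replace the transversality claim with this product decomposition, the argument goes through.
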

\begin{proof}
    \cite[Corollary 9.5]{KaPr25} states that  $X_v^w$ is smooth if and only if $X_v$ is smooth at $wB$ and $X^w$ is smooth at $vB$. But \cite[Corollary 2.10]{BC12} identifies \[
  \operatorname{Sing}X^w_v
  =\bigl(\operatorname{Sing}X^w\cap X_v\bigr)\cup\bigl(X^w\cap\operatorname{Sing}X_v\bigr)
\]
so the result follows as $vB,wB$ lie in the dense open affine charts $\mathring{X}_v\subset X_v$ and $\mathring{X}^w\subset X^w$ so lie outside of the singular loci of $X_v$ and $X^w$ respectively.
\end{proof}

We will also need an extension of Deodhar's criterion \cite{D85} for smoothness at $T$-fixed points of Schubert varieties to arbitrary Richardson varieties. Recall that we denote $t_{ab}\in S_n$ for the transposition swapping $a$ and $b$, where by convention we always set $a<b$.

\begin{prop}
\label{prop:Deodhar}
    For $uB\in X_v^w$, we have
    $$\#\{t_{ab}\in S_n\suchthat u\,t_{ab}\in [v,w]\}\ge \ell(w)-\ell(v)$$
    with equality if and only if $X^w_v$ is smooth at $u\in X^w_v$.
\end{prop}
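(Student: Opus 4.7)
The plan is to exploit the $T$-equivariant structure of the Richardson variety at its $T$-fixed points. Since $X_v^w$ is invariant under the maximal torus $T \subset B$ and its $T$-fixed points are precisely $\{uB \suchthat u \in [v,w]\}$, I would study the local geometry at $uB$ via the \emph{$T$-curves} (one-dimensional $T$-orbit closures) through $uB$. The $T$-curves through $uB$ in $\flag{n}$ are finite in number and parametrized by positive roots $\alpha$; the curve attached to $\alpha$ is a $\PP^1$ joining $uB$ to $s_\alpha u B = u\,t_{ab}B$ where $t_{ab} = u^{-1}s_\alpha u$, and as $\alpha$ ranges over positive roots, $t_{ab}$ ranges bijectively over transpositions in $S_n$. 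Such a $T$-curve lies in $X_v^w$ if and only if both its endpoints do, which, given $uB \in X_v^w$, reduces to the single condition $u\,t_{ab} \in [v,w]$. Hence the $T$-curves through $uB$ lying in $X_v^w$ are in bijection with $\{t_{ab} \in S_n \suchthat u\,t_{ab} \in [v,w]\}$.

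Next, each such $T$-curve contributes a tangent direction at $uB$ of $T$-weight $\alpha$, and distinct $T$-curves give tangent directions of distinct weights, hence linearly independent. The crucial geometric input is the converse: for a projective $T$-variety with isolated $T$-fixed points and finitely many one-dimensional $T$-orbits---a condition satisfied by Schubert, opposite Schubert, and hence Richardson varieties---the tangent directions of $T$-curves through a $T$-fixed point span the full Zariski tangent space there. Granting this, we obtain
$$\dim T_{uB}(X_v^w) \;=\; \#\{t_{ab} \in S_n \suchthat u\,t_{ab} \in [v,w]\}.$$
Since $X_v^w$ is irreducible of dimension $\ell(w)-\ell(v)$, one always has $\dim T_{uB}(X_v^w) \ge \ell(w)-\ell(v)$, with equality if and only if $uB$ is a smooth point of $X_v^w$. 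Combining with the previous display gives both the stated inequality and the smoothness characterization.

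The main obstacle is the spanning assertion for the Zariski tangent space by $T$-curve directions. I would establish it by passing to a $T$-stable affine neighborhood of $uB$ (for instance the intersection of $X_v^w$ with the opposite Schubert cell $B^- u B/B$) and checking that the $T$-weight decomposition of its cotangent space assigns multiplicity one to each weight appearing, with the weights appearing being exactly those coming from $T$-curves. This is the content of Deodhar's combinatorial-geometric argument in \cite{D85} and the Carrell--Peterson formalism for Bruhat graphs; the essential point is that at $uB$ the Schubert condition $u\,t_{ab}\le_B w$ and opposite Schubert condition $v\le_B u\,t_{ab}$ cut transversely enough to keep each tangent weight multiplicity at most one, after which the weight-count reduces to the bijection with the interval-preserving transpositions above.
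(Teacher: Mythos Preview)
The paper does not supply a proof of this proposition; it is quoted from Deodhar~\cite{D85} and used as a black box (strictly, \cite{D85} treats Schubert varieties, and the Richardson statement follows by intersecting, as you outline). Your sketch follows the standard $T$-equivariant approach and is correct in outline.

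One claim to tighten: it is not true that for an arbitrary projective $T$-variety with isolated fixed points and finitely many one-dimensional $T$-orbits the $T$-curve tangent directions span the Zariski tangent space at a fixed point. Already a cuspidal rational $T$-curve fails this, and more to the point, Schubert varieties in non--simply-laced types can fail it; there Deodhar's equality characterizes only rational smoothness. What makes the argument go through here is what your final paragraph correctly isolates: in type~$A$ the tangent space of $\flag{n}$ at any $T$-fixed point is $T$-multiplicity-free, \emph{and} (a genuine theorem, due in various forms to Lakshmibai--Seshadri, Polo, and Carrell--Peterson) the tangent weights of a type~$A$ Schubert variety at $uB$ are exactly those coming from $T$-curves. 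Granting that for $X^w$ and $X_v$ separately, multiplicity-freeness lets you intersect to obtain $\dim T_{uB}X_v^w=\#\{t_{ab}\suchthat u\,t_{ab}\in[v,w]\}$, and your conclusion follows.
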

\begin{proof}
As $u\in [v,w]$ the numerical inequality is established in {\cite{D85}}. Furthermore it was shown for $w=w_{\circ}$ or $v=\idem$ that equality holds if and only if $u$ is smooth in the corresponding Richardson variety (which is a Schubert or opposite Schubert variety). We now establish the equality case in general. 
    In \cite[Proof of Theorem 1.1]{KWY13} there is an isomorphism of an open affine chart around $u\in X^w_v$ given by
    \begin{equation}
\label{eqn:uvw}u\mathring{X}_{\idem}\cap X^w_v\cong (X_v\cap \mathring{X}^u) \times (X^w\cap \mathring{X}_u)\end{equation}
    where $u\mapsto (u,u)$. Applying \eqref{eqn:uvw} with $(u;v,w_{\circ})$ in place of $(u;v,w)$ shows $u\mathring{X}_{\idem}\cap X_v\cong (X_v\cap \mathring{X}^u)\times \mathbb{A}^{\binom{n}{2}-\ell(u)}$ and applying \eqref{eqn:uvw} with $(u;\idem,w)$ in place of $(u;v,w)$  shows $u\mathring{X}_{\idem}\cap X^w\cong (X^w\cap \mathring{X}_u)\times \mathbb{A}^{\ell(u)}$. Hence $u\in X^w_v$ is smooth if and only if $u$ is smooth in $X^w$ and in $X_v$. Deodhar's result \cite{D85} says these happen if and only if we have both of \begin{align*}\#\{t_{ab}\in S_n\suchthat ut_{ab}\le w\}&=\ell(w)\text{ or equivalently }\#\{t_{ab}\in S_n\suchthat ut_{ab}\in [u,w]\}=\ell(w)-\ell(u),\text{ and }\\
    \#\{t_{ab}\in S_n\suchthat ut_{ab}\ge v\}&=\binom{n}{2}-\ell(v)\text{ or equivalently }\#\{t_{ab}\in S_n\suchthat ut_{ab}\in [v,u]\}=\ell(u)-\ell(v).
    \end{align*}

    Because $$\#\{t_{ab}\in S_n\suchthat ut_{ab}\in [v,w]\}=\#\{t_{ab}\in S_n\suchthat ut_{ab}\in [v,u]\}+\#\{t_{ab}\in S_n\suchthat ut_{ab}\in [u,w]\}$$
    and $\ell(w)-\ell(v)=(\ell(w)-\ell(u))+(\ell(u)-\ell(v))$, we have the numerical equality for the triple $(u;v,w)$ if and only if we have equality for both triples $(u;v,u)$ and $(u;u,w)$, which we have just established is equivalent  to $u\in X^w_v$ being smooth.
    
\end{proof}

\begin{lem}
\label{lem:domsmooth}
    If $v$ is dominant and $v<_B w$ then $X^w_v$ is smooth at $wB$.
\end{lem}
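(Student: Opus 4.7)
The plan is to verify Deodhar's criterion (\Cref{prop:Deodhar}) at the $T$-fixed point $wB$. Concretely, we need to show
\[
\#\{t_{ab}\in S_n\suchthat v\le_B wt_{ab}\le_B w\}=\ell(w)-\ell(v).
\]
A standard fact about Bruhat order says that $wt_{ab}\le_B w$ if and only if $(a,b)$ is an inversion of $w$, and such inversions are in bijection with the cells of $\rothe(w)$ via $(a,b)\mapsto (w(b),a)$. So the full pool of transpositions with $wt_{ab}\le_B w$ has cardinality $\ell(w)$.

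Since $v$ is dominant, \Cref{eqn:rotheBruhat} reformulates the Bruhat condition $v\le_B wt_{ab}$ as the containment $\rothe(v)\subseteq \rothe(wt_{ab})$. Writing $\mu\coloneqq \rothe(v)$, the identity we need reduces to the claim that the cells $(r,c)=(w(b),a)\in\rothe(w)$ for which $\mu\subseteq \rothe(wt_{ab})$ are exactly those lying in $\rothe(w)\setminus \mu$. Granting this dichotomy, the count is $|\rothe(w)\setminus \mu|=\ell(w)-\ell(v)$, and Deodhar's criterion delivers smoothness at $wB$.

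The easy direction of the dichotomy is that if $(r,c)\in \mu$, then the cell $(r,c)$ itself is destroyed in $\rothe(wt_{ab})$: indeed $(wt_{ab})(a)=w(b)=r$, whereas $(r,c)\in \rothe(wt_{ab})$ would require column $c=a$ to take a value strictly greater than $r$. The main obstacle is the reverse direction: given $(r,c)\in \rothe(w)\setminus \mu$, we must show that every cell $(r',c')\in \mu$ remains in $\rothe(wt_{ab})$, i.e., that $(wt_{ab})^{-1}(r')>c'$ and $(wt_{ab})(c')>r'$. Since multiplication by $t_{ab}$ only perturbs rows $\{w(a),w(b)\}$ and columns $\{a,b\}$, the nontrivial situations are $r'\in\{r,w(a)\}$ or $c'\in\{a,b\}$. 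In each such subcase, the top-left justification of the Young diagram $\mu$ combined with $(r,c)\notin \mu$ yields a strict inequality (for instance $r'<r$ when $c'=a$, or $c'<a$ when $r'=r$) that, together with $(r',c')\in \rothe(w)$, implies the required Rothe-diagram inequality for $wt_{ab}$. A short check of the handful of subcases completes the proof.
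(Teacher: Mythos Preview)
Your proof is correct and follows the same strategy as the paper: verify Deodhar's criterion at $wB$ by identifying the inversions of $w$ with the cells of $\rothe(w)$ and using the characterization \eqref{eqn:rotheBruhat} of $v\le_B wt_{ab}$ for dominant $v$.

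The one difference is that you do more work than needed. Your ``easy direction'' (if the cell $(w(b),a)$ lies in $\mu=\rothe(v)$ then $v\not\le_B wt_{ab}$) already shows that at most $\ell(w)-\ell(v)$ of the inversions satisfy $wt_{ab}\in[v,w]$. But \Cref{prop:Deodhar} asserts the inequality $\#\{t_{ab}:wt_{ab}\in[v,w]\}\ge \ell(w)-\ell(v)$ unconditionally, so equality is automatic and smoothness follows immediately. The paper's proof stops there. Your ``main obstacle'' --- the reverse direction with its case analysis --- is therefore unnecessary, though your sketch of it is sound.
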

\begin{proof}
    The boxes in the Rothe diagram $\rothe(w)$ are in bijection with the inversions $t_{ab}$ of $w$, i.e. with $wt_{ab}\le_B w$, by taking such a transposition to the box at position $(w(b),a)$. Furthermore the $\ell(v)$-many boxes in the Rothe diagram $\rothe(v)$ are inversions $t_{ab}$ such that $wt_{ab}$ has its permutation matrix occupy a box in $\rothe(v)$, and hence $v\not\le_B wt_{ab}$ by \eqref{eqn:rotheBruhat}. We conclude by \Cref{prop:Deodhar} that the remaining $(\ell(w)-\ell(v))$-many inversions of $w$ have $v\le_B wt_{ab}$ and $X^w_v$ is smooth at $wB$. 
\end{proof}

\begin{rem}
    Geometrically, if $v$ is a dominant permutation then $X_v\subset \fl{n}$ is the subvariety of those $gB$ where the entries of $g$ inside the Rothe diagram of $v$ are set to zero (this follows from the rank condition characterization of Schubert varieties and is the key observation in the study of Ding partition varieties, see e.g. \cite{D97,DMR07}). 
    In the Bruhat decomposition $\GL_n/B=\bigsqcup_{u\in S_n}\mathring{X}^u$, we have $\mathring{X}^u\cong \mathbb{A}^{\ell(u)}$ as the set of matrices which are $1$ in the entries $(u(i),i)$, have indeterminate entries in the boxes of the Rothe diagram $\rothe(u)$, and zeroes elsewhere. Therefore we have
    $$X_v^w=\bigsqcup_{u\le_B w}X_v\cap \mathring{X}^u =\bigsqcup_{u\in [v,w]}X_v\cap \mathring{X}^u$$
    and $X_v\cap \mathring{X}^u\cong \mathbb{A}^{\ell(u)-\ell(v)}$ is the coordinate subspace of $\mathring{X}^u\cong \mathbb{A}^{\ell(u)}$ where we set the entries in $\rothe(v)$ to $0$ (this is well-defined by \eqref{eqn:rotheBruhat}). This gives an affine paving of the Richardson varieties $X^w_v$ with $v$ dominant, and because $X_v\cap \mathring{X}^w$ is a dense open chart around $wB$ isomorphic to $\mathbb{A}^{\ell(w)-\ell(v)}$ this  gives an alternate geometric way to verify that $wB\in X^w_v$ is smooth.
\end{rem}

\begin{thm}\label{th:smoothness}
  Let $(v,w)\in \wellaligned{n}$ be very well-aligned. Then the Richardson variety $X_v^w$ is smooth.
\end{thm}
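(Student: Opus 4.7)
My plan is to combine Fact~\ref{fact:KaPr25}, which reduces smoothness of $X^w_v$ to smoothness at the two torus-fixed points $vB$ and $wB$, with the translation reduction (Prop~\ref{pr:to_dominant} together with the corollary of Section~\ref{sec:class_expansions} upgrading an interval translation-equivalence to a permutation-matrix left-translation of Richardson varieties) and Lemma~\ref{lem:domsmooth}, applied in two different ways via the two halves of the very well-aligned condition.

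For smoothness at $wB$: Prop~\ref{pr:to_dominant} applied to $(v,w)\in\wellaligned{n}$ yields a translation-equivalent pair $(v^\uparrow,w')$ with $v^\uparrow$ dominant, and the Section~\ref{sec:class_expansions} corollary supplies a permutation-matrix left-translation isomorphism $X^w_v\cong X^{w'}_{v^\uparrow}$ sending $wB\mapsto w'B$. Lemma~\ref{lem:domsmooth} applied to $X^{w'}_{v^\uparrow}$ then yields smoothness at $w'B$, and hence at $wB$.

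For smoothness at $vB$, the strategy is to use the classical duality involution $\iota\colon\fl{n}\to\fl{n}$ obtained from orthogonal complements with respect to the anti-diagonal form (concretely, $\iota(gB)=(g^{-1})^Tw_0B$). This automorphism of the flag variety satisfies $\iota(X^a)=X_{aw_0}$, $\iota(X_b)=X^{bw_0}$, and $\iota(uB)=uw_0B$ at $T$-fixed points; it therefore restricts to an isomorphism $X^w_v\cong X^{vw_0}_{ww_0}$ sending $vB$ to $vw_0B$, which is the \emph{top} fixed point of $X^{vw_0}_{ww_0}$ (since right-multiplication by $w_0$ reverses Bruhat order). Invoking the very well-aligned hypothesis, $(ww_0,vw_0)\in\wellaligned{n}$, so Prop~\ref{pr:to_dominant} and the Section~\ref{sec:class_expansions} corollary give a further translation isomorphism $X^{vw_0}_{ww_0}\cong X^{W''}_{(ww_0)^\uparrow}$ with $(ww_0)^\uparrow$ dominant, matching top fixed points. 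A final application of Lemma~\ref{lem:domsmooth} gives smoothness at the new top $W''B$, hence at $vw_0B$, hence at $vB$.

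The main non-routine ingredient is the duality involution $\iota$: verifying explicitly that it restricts to an isomorphism $X^w_v\cong X^{vw_0}_{ww_0}$ and exchanges the bottom fixed point of one Richardson with the top of the other. This is classical but must be invoked cleanly. As an alternative route avoiding $\iota$, one could instead attempt to verify Deodhar's tangent-space inequality (Prop~\ref{prop:Deodhar}) at $v^\uparrow B$ directly using the very well-aligned structure on $[v^\uparrow,w']$, but the requisite combinatorics appears considerably more involved.
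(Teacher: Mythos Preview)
Your argument is correct and follows essentially the same path as the paper: reduce via Fact~\ref{fact:KaPr25} to the two points $vB,wB$, handle $wB$ by translating to a dominant bottom and invoking Lemma~\ref{lem:domsmooth}, and handle $vB$ by passing to the dual Richardson $X^{vw_0}_{ww_0}$ (where $vw_0B$ is now the top), then using well-alignedness of $(ww_0,vw_0)$ to translate again to a dominant bottom and apply Lemma~\ref{lem:domsmooth}. The only difference is in how the passage $X^w_v\rightsquigarrow X^{vw_0}_{ww_0}$ is effected: you invoke the geometric duality involution $\iota(gB)=(g^{-1})^Tw_0B$, which gives a genuine isomorphism of varieties sending $vB\mapsto vw_0B$, whereas the paper transfers only the smoothness \emph{condition} combinatorially, observing that the anti-isomorphism $u\mapsto uw_0$ of Bruhat intervals matches up Deodhar's count (Proposition~\ref{prop:Deodhar}) at $vB$ with the count at $vw_0B$. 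Your route is arguably cleaner since it produces an actual isomorphism of Richardsons; the paper's route has the mild advantage of staying entirely within the combinatorial toolkit already assembled and avoiding the need to introduce and verify the properties of $\iota$.
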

\begin{proof}
By \Cref{pr:to_dominant} we may choose $[v',w']$ translation-equivalent to $[v,w]$ with $v'$ dominant. Then $X^w_v=v(v')^{-1}X^{w'}_{v'}$ by \Cref{pro:richardson_translate}, and so $X^{w}_{v}$ is smooth at $wB$ since $X^{w'}_{v'}$ is smooth at $w'B$ by \Cref{lem:domsmooth}.

    Since $(v,w)$ is very well-aligned, the pair $(ww_{\circ},vw_{\circ})$ is also well-aligned, so by \Cref{pr:to_dominant} the interval $[ww_{\circ},vw_{\circ}]$ is translation-equivalent to $[w''w_{\circ},v''w_{\circ}]$ with $w''w_{\circ}$ a dominant permutation. Exactly as in the previous paragraph, \Cref{lem:domsmooth} shows that $X^{v''w_{\circ}}_{w''w_{\circ}}$ is smooth at $v''w_{\circ}B$, and then \Cref{pro:richardson_translate} shows that $X^{vw_{\circ}}_{ww_{\circ}}$ is smooth at $vw_{\circ}B$. Now, right multiplication by $w_{\circ}$ is an anti-isomorphism from $[ww_{\circ},vw_{\circ}]$ to $[v,w]$. So, by \Cref{prop:Deodhar}, we get that $X^{vw_{\circ}}_{ww_{\circ}}$ is smooth at $vw_{\circ}B$ if and only if $X^w_v$ is smooth at $vB$. Because $X^w_v$ is smooth at $vB$ and $wB$, we conclude by \Cref{fact:KaPr25}.
\end{proof}
% \begin{proof}
% Let $[v',w']$ be translation-equivalent to $[v,w]$ with $v'$ dominant. Then $X^w_v=v(v')^{-1}X^{w'}_{v'}$, and so $X^{w}_{v}$ is smooth at $wB$ since $X^{w'}_{v'}$ is smooth at $v'B$ by \Cref{lem:domsmooth}.

% Since $(ww_{\circ},vw_{\circ})$ is also well-aligned, we know that the interval $[ww_{\circ},vw_{\circ}]$ is translation equivalent to $[w''w_{\circ},v''w_{\circ}]$ with $w''w_{\circ}$ a dominant permutation. Therefore $vw_{\circ}B$ is smooth in $X^{ww_{\circ}}_{vw_{\circ}}$ by \Cref{lem:domsmooth}. Now, right multiplication by $w_{\circ}$ is an anti-isomorphism from $[ww_{\circ},vw_{\circ}]$ to $[v,w]$. So, by \Cref{prop:Deodhar}, we get that $X^{ww_{\circ}}_{vw_{\circ}}$ is smooth at $vw_{\circ}B$ if and only if $X^w_v$ is smooth at $wB$. Because $X^w_v$ is smooth at $vB$ and $wB$, we conclude by \Cref{fact:KaPr25}.
% \end{proof}

\section{Odds and ends}
\label{sec:misc}
%%%%%%%%%%%%%%%%%%%%%%

To conclude this article we collect some numerological observations that may be of interest.
One may ask for quantitative information about how many Schubert structure coefficients have we computed. Put differently, one may inquire about the number $|\wellaligned{n}|$ of well-aligned pairs for a given nonnegative integer $n$. We offer the following conjecture.

\begin{conj}
  Let $\mc{W}(x)$ denote the exponential generating function:
  \[
    \mc{W}(x)=\sum_{n\geq 0}|\wellaligned{n}|\frac{x^{n}}{n!}=1+1\frac{x^1}{1!}+3\frac{x^2}{2!}+17\frac{x^3}{3!}+147\frac{x^4}{4!}+1729\frac{x^5}{5!}+25827\frac{x^6}{6!}+468593\frac{x^7}{7!}+\cdots.
  \]
  Then $\mc{W}(x)$ satisfies the following functional equation:
  \[
    \mc{W}'(x)=\frac{\mc{W}(x)^2}{2-\mc{W}(x)}.
  \]
\end{conj}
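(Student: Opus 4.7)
The plan is to convert the conjectured functional equation into an equivalent bilinear recurrence on $w_n := |\wellaligned{n}|$ and then establish that recurrence. Writing $\mc{W}'(2-\mc{W}) = \mc{W}^2$ as $2\mc{W}' = \mc{W}^2 + \mc{W}\mc{W}'$ and extracting the coefficient of $x^{n-1}/(n-1)!$ via the EGF product rule, the conjecture is equivalent to
$$2 w_n \;=\; \sum_{k=0}^{n-1}\binom{n-1}{k}\bigl(w_k + w_{k+1}\bigr)\,w_{n-1-k} \qquad (\star)$$
for all $n\ge 1$; one verifies $(\star)$ directly against the tabulated values $1,1,3,17,147,1729,\dots$.

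The first step is a basic structural recurrence obtained by unwinding \Cref{def:good}. For fixed $(v',w')\in\wellaligned{n-1}$, the well-aligned extensions $(v,w)\in\wellaligned{n}$ with $(\delta(v),\delta(w))=(v',w')$ are parametrized by pairs $(i,j)$ where $1\le i\le j\le \min(n,\,i+\mu_i(v')+1)$ and $\mu_i(v')$ denotes the length of the maximal ascending run of $v'$ starting at position $i$. Since $\mu_i(v') \le n-1-i$ automatically, the number of valid $j$ is $\mu_i(v')+2$ for each $i<n$ and $1$ for $i=n$, giving a total of $2n-1+M(v')$ extensions with $M(v'):=\sum_i \mu_i(v')$. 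Summing over $(v',w')\in\wellaligned{n-1}$ yields
$$w_n = (2n-1)\,w_{n-1} + t_{n-1},\qquad t_m := \sum_{(v,w)\in \wellaligned{m}} M(v),$$
and in EGF terms this says $\mc{W}'(x)(1-2x)-\mc{W}(x) = T(x)$. Combining with the conjectured equation reduces the conjecture to the bilinear identity $T\cdot(2-\mc{W}) = 2\mc{W}\bigl((\mc{W}-1) - x\mc{W}\bigr)$, equivalently to $(\star)$ itself.

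The main obstacle is the combinatorial proof of $(\star)$. In species language, $(\star)$ asserts a bijection between two labeled copies of $\wellaligned{n}$ (as structures on an $n$-element set with one ``fresh'' label supplied by the derivative $\mc{W}'$) and the disjoint union, indexed by subsets $S$ of the remaining $n-1$ labels, of (i) ordered pairs of well-aligned pairs on $(S,[n-1]\setminus S)$ and (ii) ordered pairs in which the first piece carries an additional distinguished label. The natural candidate decomposition splits the labels of $(v,w)$ by isolating the maximal ascending run of $v$ containing the value $1$ (at positions $v^{-1}(1),\ldots,v^{-1}(1)+\mu_{v^{-1}(1)}(v)$): the alignment condition forces $w^{-1}(1)$ to lie inside this run, and the $M$-statistic records precisely the freedom in the relative placement of $v^{-1}(1)$ and $w^{-1}(1)$. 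The hard part will be propagating this split through all $n$ levels of the $\delta$-filtration while showing that both pieces inherit well-aligned structures; here \Cref{le:condition_2,le:well_aligned_preserved,pr:to_dominant} should provide the key structural inputs, potentially by reducing via translation-equivalence to the dominant case where the Rothe diagram of $v^{\uparrow}$ makes the decomposition more transparent.

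If the direct bijection resists, a fallback is to integrate the conjectured equation to the implicit form $2(\mc{W}-1)/\mc{W} - \log \mc{W} = x$ and apply Lagrange inversion. This gives the explicit formula
$$w_n \;=\; (n-1)!\;[u^{n-1}]\bigl(1 - \tfrac{3}{2}u + \tfrac{5}{3}u^2 - \tfrac{7}{4}u^3 + \cdots\bigr)^{-n},$$
which one could then attempt to verify against a direct enumeration of well-aligned pairs parametrized by the insertion sequences $(i_k,j_k)_{k=1}^n$ together with the weight $\prod_k (\mu_{i_k}(\delta^{n-k}(v))+2)$ obtained in Step~1.
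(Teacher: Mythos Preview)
The statement you are attempting to prove is labeled a \textbf{Conjecture} in the paper and is not proved there; the authors only remark that the sequence matches \cite[A234289]{oeis} for $0\le n\le 7$ and that ``both sides of the functional equation can be assigned combinatorial meaning easily, which in turn suggests there is a recursive decomposition of `pointed' well-aligned pairs.'' So there is no paper proof to compare against, and any complete argument you supply would go beyond what the paper establishes.

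Your proposal, however, is not a proof either---it is a research plan. The parts you actually carry out are correct: the rewriting $2\mc{W}'=\mc{W}^2+\mc{W}\mc{W}'$ and the extraction of the recurrence $(\star)$ are fine; your count of aligned extensions of a fixed $(v',w')\in\wellaligned{n-1}$ is correct and yields the valid identity $w_n=(2n-1)w_{n-1}+t_{n-1}$ with $t_m=\sum_{(v,w)\in\wellaligned{m}}M(v)$; and the integration to $2(\mc{W}-1)/\mc{W}-\log\mc{W}=x$ together with the Lagrange inversion formula is algebraically sound. But none of these steps \emph{proves} the conjecture: each one is a reformulation, and you explicitly flag the actual content (``The hard part will be propagating this split through all $n$ levels of the $\delta$-filtration\ldots'') as work not yet done.

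The specific gap is that your candidate decomposition---splitting $(v,w)$ along the maximal ascending run of $v$ containing the value $1$---is only a heuristic. For $(\star)$ you need, for each $n$, a two-to-one map from pairs in $\wellaligned{n}$ (with one marked label) to ordered pairs of smaller well-aligned structures on complementary label sets, and nothing in your outline explains why the two pieces obtained from an ascending run are themselves well-aligned, nor how the recursive $\delta$-structure factors through the split. The lemmas you cite (\Cref{le:condition_2,le:well_aligned_preserved,pr:to_dominant}) control translation-equivalence to dominant $v$, which is orthogonal to producing a \emph{product} decomposition of $\wellaligned{n}$; reducing to the dominant case does not obviously help here since $\wellaligned{n}^{132}$ has size $(2n-1)!!$ and satisfies a different (simpler) recursion. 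Likewise, the Lagrange inversion fallback gives a closed form for the coefficients of the \emph{conjectured} series, but you would still need an independent enumeration of $\wellaligned{n}$ to match against it, and the weight $\prod_k(\mu_{i_k}+2)$ you mention already encodes only the recursion $w_n=(2n-1)w_{n-1}+t_{n-1}$, not a closed count. As written, the proposal leaves the conjecture exactly where the paper does: plausible, numerically verified, and open.
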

The sequence in question matches \cite[A234289]{oeis} for $0\leq n\leq 7$. 
Both sides of the functional equation can be assigned combinatorial meaning easily, which in turn suggests there is a recursive decomposition of ``pointed'' well-aligned pairs that witnesses the functional equation. 

As seen before, the well-aligned pairs in $\wellaligned{n}^{132}$ are crucial for us.
The cardinality of this set equals $(2n-1)!!$; see Remark~\ref{re:double_factorial}.
Nevertheless there is a curious observation to be made.
It is the case that there exist $(v,w), (v',w')\in \wellaligned{n}^{132}$ such that the Bruhat intervals $[v,w]$ and $[v',w']$ are translation-equivalent.
So one may quotient $\wellaligned{n}^{132}$ further by this equivalence and inquire about the resulting number of equivalence classes.
We make the following conjecture.
\begin{conj}
    The total number of equivalence classes of translation-equivalent intervals is given by \cite[A111088]{oeis}, which counts circular planar electrical networks \cite{ALT15}.
\end{conj}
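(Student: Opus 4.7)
The plan is to exhibit an explicit bijection between equivalence classes of translation-equivalent Bruhat intervals arising from $\wellaligned{n}^{132}$ and circular planar electrical networks on $n$ nodes, passing through an intermediate combinatorial model of noncrossing structures.

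First, I would describe each equivalence class by its canonical translate $A_{v,w} \coloneqq v^{-1}[v,w] \subseteq S_n$, which always contains the identity. Using the matching bijection recalled in Remark~\ref{re:double_factorial}, the set $\wellaligned{n}^{132}$ is identified with matchings on $2n$ points, so translation-equivalence descends to an equivalence relation on such matchings. I would first work out the cases $n=2,3,4$ explicitly, using the initial terms of A111088 as the target enumeration, in order to pinpoint the local moves on matchings that generate these classes. A natural candidate family of moves comes from Proposition~\ref{pro:easy_bruhat_equivalent}: any simple transposition $s_i$ that satisfies its three hypotheses relative to some $(v,w)$ in the class produces a translation-equivalence with $(s_iv,s_iw)$, and one expects the classes to be precisely the orbits under iterated application of such moves (together with their image under the right-$w_o$ symmetry suggested by very well-alignedness in Definition~\ref{de:very}).

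Second, I would compose the resulting parametrization of equivalence classes with one of the standard bijections relating noncrossing combinatorial structures to circular planar electrical networks. The starting clue is the remark following Proposition~\ref{pr:to_dominant}, which shows that for the Coxeter element family $(v,v\cox)$ the translated intervals are precisely noncrossing partitions on $[n]$; the hope is that the general case extends to a parametrization by the same combinatorial gadgets indexing circular planar electrical networks in \cite{ALT15}, for instance medial pairings, cactus networks, or equivalence classes of plabic graphs up to Y--$\Delta$ moves. Alternatively, one might bypass the explicit bijection and verify the conjecture by showing that both sides satisfy a common recursion (plausibly extracted by deletion of the arc of the matching containing a distinguished boundary point, mirroring the standard contraction-deletion recursion for electrical networks), and then match initial conditions.

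The main obstacle, and the reason the statement remains only conjectural, is that translation-equivalence of Bruhat intervals and electrical equivalence of circular planar networks are defined through quite different local moves: a Bruhat-theoretic relation on permutations on one side, versus Y--$\Delta$ transformations on planar graphs on the other. A clean proof will likely require either a geometric realization of $X^w_v$ up to translation inside an electrical variety or a positroid stratum of a Grassmannian, or a careful double-induction matching recursive structures on both sides. Identifying exactly which moves on the matching model correspond to the Y--$\Delta$ moves, and verifying that the two equivalences close up to the same orbits, is the crux of the problem.
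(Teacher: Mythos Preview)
The statement you are addressing is labeled a \emph{Conjecture} in the paper and the paper offers no proof of it; it appears in the closing ``Odds and ends'' section as a numerological observation supported only by small-case checks. So there is no paper proof to compare your proposal against.

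Your proposal is not a proof either, and you acknowledge as much in your final paragraph. What you have written is a reasonable research plan: identify the local moves generating translation-equivalence on the matching model, match them to Y--$\Delta$ moves on electrical networks (or bypass this via a common recursion), and check initial terms. But none of the key steps is carried out. In particular, you do not establish that the moves coming from Proposition~\ref{pro:easy_bruhat_equivalent} (plus the $w_o$-symmetry) generate \emph{all} translation-equivalences among intervals in $\wellaligned{n}^{132}$, you do not identify the image combinatorial class, and you do not verify any recursion. Until those gaps are filled, the statement remains open, exactly as the paper presents it.
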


\bibliographystyle{hplain}
\bibliography{main.bib}

\end{document}